\newcommand{\redsout}{\bgroup\markoverwith{\textcolor{red}{\rule[0.5ex]{2pt}{.4pt}}}\ULon}
\numberwithin{equation}{section}
\newtheorem{theorem}{Theorem}[section]
\newtheorem{prop}{Proposition}[section]
\newtheorem{lemma}{Lemma}[section]
\newtheorem{remark}{Remark}[section]
\newcommand{\R}{\mathbb{R}}
\newcommand{\p}{\partial}
\newcommand{\rd}{\,d}
\renewcommand{\Re}{\mathrm{Re}}
\renewcommand{\Im}{\mathrm{Im}}
\newcommand{\supp}{\mathrm{supp}}
\newcommand{\dist}{\mathrm{dist}}
\newcommand{\LV}{\left|}
\newcommand{\RV}{\right|}
\newcommand{\LC}{\left(}
\newcommand{\RC}{\right)}
\newcommand{\LA}{\left<}
\newcommand{\RA}{\right>}
\newcommand{\RNum}[1]{\uppercase\expandafter{\romannumeral #1\relax}}
\definecolor{mycolor}{rgb}{0.122, 0.435, 0.698}
\definecolor{aliceblue}{rgb}{0.94, 0.97, 1.0}
\newmdenv[innerlinewidth=0.5pt, roundcorner=4pt,linecolor=mycolor,innerleftmargin=6pt,
innerrightmargin=6pt,innertopmargin=6pt,innerbottommargin=6pt]{mybox}
\newmdenv[backgroundcolor=aliceblue,innerlinewidth=0.5pt, roundcorner=4pt,linecolor=mycolor,innerleftmargin=6pt,
innerrightmargin=6pt,innertopmargin=6pt,innerbottommargin=6pt]{mybox1}
\begin{document}

\title[Partial data inverse problems for the magnetic Schr\"odinger equation]{Partial data inverse problems for the nonlinear magnetic Schr\"odinger equation}

\author[Lai]{Ru-Yu Lai}
\address{School of Mathematics, University of Minnesota, Minneapolis, MN 55455, USA}
\email{rylai@umn.edu}

\author[Uhlmann]{Gunther Uhlmann}
\address{Department of Mathematics, University of Washington, Seattle, WA 98195, USA}
\email{gunther@math.washington.edu}

\author[Yan]{Lili Yan}
\address
{School of Mathematics\\
University of Minnesota\\ 
Minneapolis, MN 55455, USA }

\email{lyan@umn.edu}

\maketitle
 \setcounter{tocdepth}{1}
\tableofcontents
 \setcounter{tocdepth}{2}

\begin{abstract}
In this paper, we study the partial data inverse problem for nonlinear magnetic Schr\"odinger equations.
We show that the knowledge of the Dirichlet-to-Neumann map, measured on an arbitrary part of the boundary, determines the time-dependent linear coefficients, electric and magnetic potentials, and nonlinear coefficients, provided that the divergence of the magnetic potential is given. Additionally, we also investigate both the forward and inverse problems for the linear magnetic Schr\"odinger equation with a time-dependent leading term. In particular, all coefficients are uniquely recovered from boundary data. 
\end{abstract}

\section{Introduction}
Let $T>0$ be a real number and $\Omega$ be a bounded domain in $\R^n$, $n\geq 2$, with smooth boundary $\p\Omega$. 
We investigate the partial data inverse problem for the nonlinear time-dependent magnetic Schr\"odinger equation: 
\begin{align}\label{eq_IBVP}
\left\{\begin{array}{rrll}
(i\p_t + \Delta_A  + q) u&=& N(t,x,u,\overline{u})  &\hbox{ in }Q,\\
u&=& f &\hbox{ on } \Sigma,\\
u&=& 0 &\hbox{ on } \{t=0\}\times \Omega,
\end{array}  \right.
\end{align}
where the set $Q$ with its lateral boundary $\Sigma$ are defined by 
$$
Q:=(0,T)\times\Omega \quad \hbox{ and }\quad \Sigma: = (0,T)\times \p \Omega,
$$
and the operator $\Delta_A$ is given by $\Delta_{A} : =(\nabla+ iA)^2 =\Delta + 2iA\cdot \nabla + i\nabla\cdot A - |A|^2.$ Throughout the paper, we denote by $\p_t$ the derivative with respect to $t$, and denote by $\nabla$ and $\nabla\cdot$ the gradient and divergent with respect to $x\in \R^n$, respectively.
Here $A=A (t,x)$ is the real-valued magnetic potential in $\R^n$ and $q=q(t,x)$ is the real-valued electric potential. 
The nonlinearity we consider is of the following form:  
\begin{equation}\label{eq_1_nonlinear}
N(t,x,u,\overline{u})=\sum\limits_{\substack{\sigma\geq 1\\
2\le \sigma+\beta\le M}}
B_{\sigma\beta}(t,x)u^\sigma\overline{u}^{\beta},
\end{equation}
where $\sigma$ and $\beta$ are non-negative integers, for a fixed integer $M\geq 2$.
For instance, when $\beta=0$, we have polynomial-type nonlinear term
$ 
N(t,x,u,\overline{u}) =\sum_{\sigma=2}^M B_{\sigma0} u^\sigma.
$ 
In particular, the equation in \eqref{eq_IBVP} with $A=0$, $N(t,x,u,\overline{u}) = B_{21}|u|^2u$ is known as the time-dependent Gross-Pitaevskii equation characterizing the ground-state single-particle wavefunction in a Bose–Einstein condensate.

\vskip.1cm
The main objective of this paper is to recover both the linear and nonlinear coefficients 
from partial boundary data.
Let $\Gamma$ be an open proper subset of the boundary $\p \Omega$. We denote 
$$
\Sigma^\sharp := (0,T)\times \Gamma.
$$
The data we utilize is the the partial Dirichlet-to-Neumann (DN) map defined by
\begin{align*}
\Lambda^\sharp_{A,q,N}: \mathcal{D}_\delta(\Sigma)&\to {{H}}^{2\kappa-\frac{3}{2}}(\Sigma^\sharp)\\
f&\to \nu\cdot(\nabla+ iA)u|_{\Sigma^\sharp},
\end{align*}
where $u$ is the unique solution to \eqref{eq_IBVP}, and $\nu$ is the unit outer normal to $\p\Omega$, see \eqref{eq_defn_D} for the  definition of $\mathcal{D}_\delta(\Sigma)$ with $\delta>0$. As shown in Proposition \ref{prop_nonlinear_wellposed}, the initial boundary value problem (IBVP) \eqref{eq_IBVP} is well-posed for small boundary data $f$ and thus the DN map here is well-defined for small $\delta$.

\vskip.1cm

The inverse problem we are interested in is whether the partial DN map $\Lambda^\sharp_{A,q,N}$ uniquely determines the linear coefficients $A$, $q$ and the nonlinear coefficients $B_{\alpha\beta}$ in $Q$, respectively? However, it is known that due to a gauge invariant, one can only expect to recover $(A,q)$ uniquely provided that $\nabla\cdot A$ is given \cite{KianSoccorsi}. Therefore, under certain assumptions on the coefficients, we will answer this question by showing that $\Lambda^\sharp_{A_1,q_1,N_1}=\Lambda^\sharp_{A_2,q_2,N_2}$ implies $A_1=A_2$, $q_1=q_2$, and $B_{1,\sigma\beta}=B_{2,\sigma\beta}$ in $Q$ for all $2\le \sigma+\beta\le M$ with $\sigma\geq 1$.

For the stationary linear Schr\"odinger equation, $-\Delta u+ Vu=0$, the inverse problem is related to the well-known Calder\'on problem, which has intensive results for the unique reconstruction of $V$ with full data and partial data, see review paper \cite{uhlmann2009calderon} and the reference therein.
Moreover, the inverse boundary value problem for the stationary magnetic Schr\"odinger equation, $-\Delta_A u+V u=0$, has been considered in, for instance, \cite{haberman2016unique, krupchyk2014uniqueness, NSU1995, panchenko2002inverse,salo2004inverse, sun_inverse_1993}.
Due to a gauge invariance, only $\text{curl} A$ can be uniquely recovered.

\vskip.1cm

Inverse problems for nonlinear partial differential equations (PDEs) consider the recovery of nonlinear coefficients from measurements. This direction of research has received great attention since it find applications in various physical phenomena, such as Bose-Einstein condensates, the propagation of light in nonlinear optical fibers and plasmas
\cite{Carlo, malomed_nonlinear_2005, pitaevskii_bose-einstein_2003}. 
To tackle the nonlinear term in inverse problems, a standard approach of first-order linearization, first introduced by Isakov \cite{Isakov93}, is to linearize the DN map such that one can apply the existing result for the inverse problem for the linearized equation. Later, a second-order linearization was utilized in \cite{SunU97} and the higher-order linearization method was considered in \cite{KLU18} by adequately utilizing the nonlinear feature in the equation. This approach has been applied to study inverse boundary value problems for different types of nonlinear PDEs, see for example, hyperbolic equations \cite{CLOP,KLU18,LUW2018}, parabolic equations \cite{feizmohammadi_inverse_2022}, elliptic equations \cite{FL2019, Kang2002, KU201909, KU2019, LaiTingZhou20, LLLS201903, LLLS201905}, and transport equations \cite{LaiUhlmannYang, LaiUhlmannZhou22, LY2023, LiOuyang2022}.

\vskip.1cm

In this work, we focus on the dynamic nonlinear magnetic Schr\"odinger equation with time-dependent coefficients.
For the linear equation, the stability estimate for time-independent potentials were derived in \cite{AM, Bella, BC, BD, BKS, CS} with dynamic DN map. In particular, the first result for the unique determination of the time-dependent electric and magnetic potentials from the DN map was showed by Eskin in \cite{eskin_inverse_2008}. Moreover, the stability estimate for the time-dependent ones were given in \cite{CKS,KianSoccorsi,KianTetlow} when the data is measured on the full boundary and  \cite{Bella-Fraj2020} when partial measurement is available.
For the Schr\"odinger equation with a power-type nonlinearity, stable recovery of the time-dependent nonlinearity was considered in \cite{lai_partial_2023} with partial boundary data. From the knowledge of source-to-solution map, the work \cite{lassas_inverse_2024} investigated unique determination of the time-dependent linear and nonlinear potentials.

\subsection{Main results}
The main results of this paper consist of the following two different settings. In the first setting, the aim is to uniquely recover electromagnetic potentials and nonlinear coefficients in the magnetic Schr\"odinger equation \eqref{eq_IBVP}. The second one is considering the uniqueness result for the leading coefficient $c(t)$ and the linear coefficients $A,\,q$ appearing in $(i\p_t+c(t)\Delta_A+q) v =0$. 

\vskip.1cm

Let $\mathcal{O}\subset \overline{\Omega}$ be a nonempty open neighborhood of $\p\Omega$. 
We now state our first main result.
\begin{theorem}\label{thm_main}
Let $\Omega$ be a bounded and simply connected domain in $\R^n$ with smooth boundary $\p\Omega$, $n\geq 2$. Suppose that $A_j \in C^\infty(\overline{Q};\R^n)$ and $q_j\in C^\infty(\overline{Q};\R)$ satisfy $A_j=0$ and $q_j=0$ in $(0,T)\times\mathcal{O}$
for $j=1,\,2$, and $\nabla \cdot A_1 = \nabla\cdot A_2$ in $Q$.
Moreover, assume that $B_{j,\sigma\beta}\in C^{\infty}(\overline{Q};\R)$ in $N_j$, defined in \eqref{eq_1_nonlinear}, satisfy $B_{1,\sigma\beta}-B_{2,\sigma\beta}=0$ on $(0,T)\times \mathcal{O}$ for $j=1,\,2$.
If $\Lambda^\sharp_{A_1,q_1,N_1} =\Lambda^\sharp_{A_2,q_2,N_2}$, then 
$$A_1 = A_2,\quad q_1 = q_2,\quad \hbox{and }\quad   B_{1,\sigma\beta} = B_{2,\sigma\beta}\quad \hbox{ in }\quad Q,$$ 
for all integers $\sigma$ and $\beta$ such that $1\le\sigma$, $0\le\beta$ and $2\le\sigma+\beta\le M$.
\end{theorem}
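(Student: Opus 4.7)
My plan is to follow the higher--order linearization strategy pioneered in \cite{KLU18}, in two stages: first reduce to the linear inverse problem to recover $A$ and $q$, and then peel off the nonlinear coefficients $B_{\sigma\beta}$ by order.

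First I would perform a \emph{first--order linearization}. Given boundary data $f\in\mathcal{D}_\delta(\Sigma)$ supported in $\Sigma^\sharp$, consider the one--parameter family $u_j(\epsilon)$ solving \eqref{eq_IBVP} for the coefficients $(A_j,q_j,N_j)$ with data $\epsilon f$. Since every monomial in \eqref{eq_1_nonlinear} has degree $\sigma+\beta\ge 2$, we have $N_j(u_j(\epsilon))=O(\epsilon^2)$, so $v_j:=\partial_\epsilon u_j(\epsilon)|_{\epsilon=0}$ satisfies the \emph{linear} magnetic Schr\"odinger equation $(i\p_t+\Delta_{A_j}+q_j)v_j=0$ with $v_j|_\Sigma=f$ and $v_j|_{t=0}=0$. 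The equality $\Lambda^\sharp_{A_1,q_1,N_1}=\Lambda^\sharp_{A_2,q_2,N_2}$ therefore yields equality of the partial DN maps for the two linear problems. Appealing to the partial--data uniqueness result for the time--dependent linear magnetic Schr\"odinger equation (which the authors develop as their second main result, in the spirit of \cite{Bella-Fraj2020,KianSoccorsi}) together with the gauge condition $\nabla\cdot A_1=\nabla\cdot A_2$ and the normalization $A_j=q_j=0$ on $(0,T)\times\mathcal{O}$, I conclude $A_1=A_2$ and $q_1=q_2$ in $Q$. Write $A:=A_1=A_2$, $q:=q_1=q_2$.

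Next I would recover the $B_{\sigma\beta}$ by induction on $m=\sigma+\beta$, starting from $m=2$. Fix $m$ and consider boundary data $f=\sum_{k=1}^m\epsilon_k f_k$ depending on small complex parameters $\epsilon_1,\dots,\epsilon_m$ (and, to handle conjugation, I allow some of the $\epsilon_k$ to play the role of antiholomorphic parameters, treating $\epsilon$ and $\overline{\epsilon}$ as independent to pick out factors from $u$ versus $\overline{u}$). Denote by $u_j(\epsilon)$ the corresponding solution for equation $(A,q,N_j)$ and set $w_\sharp := \partial^{\sigma}_{\epsilon_1\cdots\epsilon_\sigma}\partial^{\beta}_{\overline{\epsilon}_{\sigma+1}\cdots\overline{\epsilon}_{\sigma+\beta}}u_j(\epsilon)\big|_{\epsilon=0}$. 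The contribution of $B_{\sigma\beta}$ through $N_j$ produces a source term of the form $\sigma!\,\beta!\,B_{j,\sigma\beta}\prod_{k=1}^{\sigma}v_k\prod_{k=\sigma+1}^{\sigma+\beta}\overline{v_k}$ in the equation satisfied by $w_\sharp^{(1)}-w_\sharp^{(2)}$; by the inductive hypothesis, all coefficients $B_{j,\sigma'\beta'}$ with $\sigma'+\beta'<m$ have already been shown to agree. Testing the equation $(i\p_t+\Delta_A+q)(w_\sharp^{(1)}-w_\sharp^{(2)})= (B_{1,\sigma\beta}-B_{2,\sigma\beta})\prod v_k\prod\overline{v_\ell}$ against a solution $v_0$ of the adjoint linear equation with suitable endpoint conditions, and using that the partial Neumann traces agree (so the boundary term vanishes on $\Sigma^\sharp$; on $\Sigma\setminus\Sigma^\sharp$ the Dirichlet data is zero), yields the orthogonality identity
\begin{equation*}
\int_Q (B_{1,\sigma\beta}-B_{2,\sigma\beta})\,\overline{v_0}\,v_1\cdots v_\sigma\,\overline{v_{\sigma+1}}\cdots\overline{v_{\sigma+\beta}}\,dt\,dx = 0
\end{equation*}
for all admissible solutions $v_0,v_1,\dots,v_{\sigma+\beta}$ of the linear magnetic Schr\"odinger equation (and its adjoint) whose Cauchy data are supported on $\Sigma^\sharp$ and the relevant time slices.

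Finally, I would close the argument by showing that products of such solutions form a dense family in $L^1(Q)$, so that the integral identity forces $B_{1,\sigma\beta}=B_{2,\sigma\beta}$ in $Q$. The natural tool is to plug in complex geometric optics solutions for the time--dependent magnetic Schr\"odinger operator, of the form $v\sim e^{i(\xi\cdot x-|\xi|^2 t/|\xi|)}\,a(t,x;\xi)(1+r)$ with $r\to 0$ as $|\xi|\to\infty$, built via Carleman estimates with boundary weights vanishing on $\p\Omega\setminus\Gamma$ so that the boundary data is supported in $\Sigma^\sharp$ up to controlled error; this is the standard Kenig--Sj\"ostrand--Uhlmann--type construction adapted to the parabolic/Schr\"odinger setting as in \cite{eskin_inverse_2008,KianSoccorsi}. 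Choosing the phases so that their weighted sums sweep out all frequencies in the space--time Fourier transform of $B_{1,\sigma\beta}-B_{2,\sigma\beta}$ then yields vanishing. The main technical obstacle I anticipate is exactly this last step: producing enough partial--data CGO solutions to the time--dependent magnetic Schr\"odinger equation for which arbitrary $(\sigma+\beta+1)$--fold products are dense, while keeping track of the holomorphic versus antiholomorphic structure needed to isolate $B_{\sigma\beta}$ from $B_{\sigma'\beta'}$ with $\sigma'+\beta'=\sigma+\beta$; the induction and the first--order step are, by comparison, routine once the linear partial--data result is in hand.
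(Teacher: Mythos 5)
Your skeleton (first-order linearization to reduce to the linear problem for $A,q$, then higher-order linearization and induction on $m=\sigma+\beta$) matches the paper's strategy, but two steps that you treat as routine or defer are exactly where the paper has to work, and as written your argument has genuine gaps there.

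First, the integral identity. After subtracting the two linearized equations, you set $\widetilde w = w^{(1)}-w^{(2)}$ and pair against an adjoint solution $v_0$; you claim the boundary contribution dies because ``the partial Neumann traces agree on $\Sigma^\sharp$'' and ``on $\Sigma\setminus\Sigma^\sharp$ the Dirichlet data is zero.'' That does not close the argument: integration by parts produces the term $\int_{\Sigma}\p_\nu\widetilde w\,\overline{v_0}$, and knowing $\widetilde w=0$ on $\Sigma$ kills $\int_\Sigma \widetilde w\,\p_\nu\overline{v_0}$ but says nothing about $\p_\nu\widetilde w$ on $\Sigma\setminus\Sigma^\sharp$, where the measurements give you no information. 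The paper handles this by exploiting the hypothesis that all coefficients (and hence the source $\widetilde{\mathcal N}$) vanish on $(0,T)\times\mathcal O$: the unique continuation estimate of \cite[Corollary 1]{Bella-Fraj2020} then forces $\widetilde w=0$ in a collar near $\p\Omega$, after which one multiplies by a cutoff $\chi$ and all boundary terms vanish (Propositions~\ref{prop_int_id_par_A_q} and \ref{prop_int_id_B}). Without this step the identity \eqref{eq_int_id_B} is not justified. Relatedly, note that the DN map here accepts Dirichlet data on all of $\Sigma$ and only the Neumann trace is restricted to $\Sigma^\sharp$, so your plan to build solutions whose \emph{Dirichlet} data is supported in $\Sigma^\sharp$ via Carleman weights is both unnecessary and a much harder problem than what is needed.

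Second, the density step. You acknowledge that proving density of $(\sigma+\beta+1)$-fold products of partial-data CGO solutions is the main obstacle, and indeed no such result is established here or in the cited literature; the proof cannot be completed along that route as stated. The paper instead uses GO solutions whose leading amplitudes are concentrated in a $\delta$-neighborhood of lines through a point $x_0$ (Section~\ref{sec:go concentrated}), so that the product localizes and one recovers $\widetilde B_{\sigma\beta}(t_0,x_0)$ pointwise after sending $\rho\to\infty$ and $\delta\to 0$. The separation of the different $(\sigma,\beta)$ with the same total degree $m$ — which you propose to achieve with a holomorphic/antiholomorphic Wirtinger-derivative trick that you do not verify (and which is delicate because $u_\varepsilon$ is not holomorphic in $\varepsilon$ beyond first order) — is done in the paper with real parameters, by choosing the directions $\omega_0,\dots,\omega_m$ so that only the targeted phase combination $\Phi_{1}+\cdots+\Phi_\sigma-\Phi_{\sigma+1}-\cdots-\Phi_m-\Phi_0$ vanishes while all other permutations and all other $\sigma'$ give non-stationary phases of size $O(\rho)$, hence $O(\rho^{-1})$ contributions after integration by parts (conditions \eqref{eq_prop53_2}, \eqref{eq_prop53_5} and (a)--(d) in Proposition~\ref{recoverB_m}). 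These two mechanisms — unique continuation to obtain the integral identity with partial Neumann data, and concentrated GO solutions with phase-matching to decouple and localize the nonlinear coefficients — are the substance of the proof and are missing from your proposal.
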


We note that the support of these coefficients is essential for the purpose of constructing geometric optics (GO) solutions with higher-order regularity for the linear magnetic Schr\"odinger equation and recovering the coefficients with partial data due to a unique continuation principle, see Section~\ref{section_geo_sol}-\ref{sec:nonlinear} for more illustrations. Also, the smooth condition is not essential since it can be relaxed by requiring $A,q,B\in C^K(\overline{Q})$ for sufficiently large $K$ depending on $n$.
Additionally, the condition $\sigma \geq 1$ is required to cancel out the phase in the product of GO solutions, see Section~\ref{sec:nonlinear} for details.

In the second setting,
we study the inverse problem for the linear magnetic Schr\"odinger equation with a time-dependent leading term $c(t)$:
\begin{align}\label{eq_app_IBVP}
\left\{\begin{array}{rrll}
(i\p_t+c(t)\Delta_A+q) v  &=& 0  &\hbox{ in } Q,\\
v  &=& f  &\hbox{ on } \Sigma,\\
v &=& 0  &\hbox{ on } \{t =0\}\times\Omega.
\end{array}  \right.
\end{align}
{Throughout the paper, we assume $c(t) >c_0>0$.} As mentioned in Proposition \ref{prop_app_wellpose}, the initial boundary value problem \eqref{eq_app_IBVP} is well-posed for $f\in \mathcal{H}^{\frac{5}{2}}_0(\Sigma)$.  
We define the DN map associated to \eqref{eq_app_IBVP}:
\begin{align*}
\Lambda_{c,A,q}: \mathcal{H}^{\frac{5}{2}}_0(\Sigma)&\to L^2(0,T;H^{\frac{1}{2}}(\Sigma))\\
f&\to c (t)\nu\cdot(\nabla+ iA)v|_{\Sigma},
\end{align*}
where $v\in H^{1,2}(Q)$ is the unique solution to \eqref{eq_app_IBVP}.

\vskip.1cm

The second main result, shown in the Appendix \ref{sec:appendix}, states that the time-dependent leading term and the electromagnetic potentials can be recovered from the DN map measured on the whole boundary, respectively.

\begin{theorem}\label{thm_app_c}
Let $\Omega$ be a bounded and simply connected domain in $\R^n$ with smooth boundary $\p\Omega$, $n\geq 2$. 
For $j = 1,\,2$, suppose that $c_j\in C^6([0,T];\R)$, $A_j\in C^6(\overline{Q};\R^n)$, and $q_j\in C^5(\overline{Q};\R)$ satisfy $c(t) >c_0>0$.
If $\Lambda_{c_1,A_1,q_1} =\Lambda_{c_2,A_2,q_2}$, then 
$$
c_1(t) = c_2(t) \quad \hbox{ for }t\in(0,T).
$$

Furthermore, if we assume $\nabla\cdot A_1 = \nabla\cdot A_2$ in $Q$ and $\p_x^\alpha A_1 =\p_x^\alpha A_2$ on $\Sigma$ for $\alpha\in \mathbb{N}^n$, $|\alpha|\le 5$, then we have 
$$
A_1 = A_2 \quad \hbox{ and }\quad q_1 = q_2\quad \hbox{ in }\quad Q.
$$
\end{theorem}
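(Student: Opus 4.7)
The plan is to prove Theorem \ref{thm_app_c} in two stages: first recover the scalar leading coefficient $c(t)$ via a boundary-determination argument using a localized WKB/geometric optics parametrix in the Schr\"odinger regime, and then, once $c_1=c_2$ is known, rescale time by $\tau=\int_0^t c(s)\,ds$ to reduce the equation to a standard linear magnetic Schr\"odinger equation with unit leading coefficient and appeal to the existing full-data theory for time-dependent magnetic and electric potentials.

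For Step~1, fix $t_0\in(0,T)$ and $x_0\in\p\Omega$, and work in boundary normal coordinates $(x',\rho)$ near $x_0$ with $\rho\geq 0$ inside $\Omega$. For parameters $\lambda\gg 1$, $|\eta'|>0$, and $0<\mu_0<c_0|\eta'|^2$, take Dirichlet data
\begin{equation*}
f_\lambda(t,x')=\chi(t-t_0,x'-x_0)\,e^{i\lambda\eta'\cdot x'-i\lambda^2\mu_0 t},
\end{equation*}
where $\chi$ is a cutoff supported near the origin, and construct a WKB parametrix
\begin{equation*}
v_j^{(\lambda)}(t,x',\rho)=e^{i\lambda\eta'\cdot x'-i\lambda^2\mu_0 t-\lambda\kappa_j(t)\rho}\bigl(a_{j,0}+\lambda^{-1}a_{j,1}+\cdots\bigr),\qquad \kappa_j(t)=\sqrt{|\eta'|^2-\mu_0/c_j(t)},
\end{equation*}
for the equation $(i\p_t+c_j\Delta_{A_j}+q_j)v=0$ in a local neighborhood. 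The eikonal choice $\kappa_j^2=|\eta'|^2-\mu_0/c_j$ kills the principal $\lambda^2$ contribution of $i\p_t+c_j\Delta$, and the amplitudes are determined by standard transport equations in $\rho$ with $a_{j,0}|_{\rho=0}=\chi$. The well-posedness of the IBVP produces an exact solution $v_j=v_j^{(\lambda)}+r_j^{(\lambda)}$ with the remainder negligible compared to the leading $\lambda$-scaling on $\Sigma$, so the leading-order boundary derivative satisfies
\begin{equation*}
c_j(t)\,\nu\cdot(\nabla+iA_j)v_j\big|_\Sigma\;\sim\;\lambda\,\sqrt{c_j(t_0)^2|\eta'|^2-c_j(t_0)\mu_0}\;f_\lambda|_\Sigma,
\end{equation*}
with $A_j$ and $q_j$ contributing only to lower orders in $\lambda$. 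Equality of the DN maps therefore gives $c_1(t_0)^2|\eta'|^2-c_1(t_0)\mu_0=c_2(t_0)^2|\eta'|^2-c_2(t_0)\mu_0$; varying $\mu_0$ and $|\eta'|$ independently forces $c_1(t_0)=c_2(t_0)$, and since $t_0\in(0,T)$ is arbitrary, $c_1\equiv c_2$ on $(0,T)$.

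For Step~2, write $c:=c_1=c_2$ and introduce the $C^6$-diffeomorphism $\tau(t)=\int_0^t c(s)\,ds$ from $[0,T]$ onto $[0,S]$ with $S=\int_0^T c(s)\,ds$. Setting $\tilde v_j(\tau,x):=v_j(t(\tau),x)$, $\tilde A_j(\tau,x):=A_j(t(\tau),x)$, and $\tilde q_j(\tau,x):=q_j(t(\tau),x)/c(t(\tau))$, each equation reduces to $(i\p_\tau+\Delta_{\tilde A_j}+\tilde q_j)\tilde v_j=0$ on $\tilde Q:=(0,S)\times\Omega$ with common Dirichlet data and zero initial trace, and the hypothesis $\Lambda_{c,A_1,q_1}=\Lambda_{c,A_2,q_2}$ becomes equality of the ordinary (unit-leading-coefficient) DN maps after dividing by $c(t(\tau))$. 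The gauge conditions $\nabla\cdot\tilde A_1=\nabla\cdot\tilde A_2$ in $\tilde Q$ and $\p_x^\alpha\tilde A_1=\p_x^\alpha\tilde A_2$ on $\tilde\Sigma$ for $|\alpha|\leq 5$ are preserved under this change of variable, placing us in the setting of the time-dependent linear magnetic Schr\"odinger inverse problem with full boundary data. The uniqueness result of Eskin \cite{eskin_inverse_2008} (see also \cite{KianSoccorsi,KianTetlow}) then yields $\tilde A_1=\tilde A_2$ and $\tilde q_1=\tilde q_2$, and undoing the time change gives $A_1=A_2$ and $q_1=q_2$ on $Q$.

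The principal technical obstacle is in Step~1: controlling the WKB remainder up to the boundary uniformly in $\lambda$, so that the contributions of $A_j$ and $q_j$ to the DN map are strictly subleading compared with the $c_j$-dependent term, and verifying that the boundary data $f_\lambda$ (once truncated so as to lie in $\mathcal{H}^{5/2}_0(\Sigma)$) still detects the leading symbol. This is where the regularity assumptions $c_j\in C^6([0,T])$, $A_j\in C^6(\overline Q)$, $q_j\in C^5(\overline Q)$ enter, both through the number of amplitude corrections $a_{j,k}$ that one must solve for and through the mapping properties of the DN map required to justify the asymptotic comparison.
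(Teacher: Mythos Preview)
Your two-stage plan differs from the paper's in both steps, and the two halves deserve separate comments.

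Your Step~2 is correct and in fact cleaner than what the paper does. Once $c_1=c_2=:c$, the time change $\tau=\int_0^t c(s)\,ds$ reduces $(i\p_t+c\Delta_A+q)v=0$ to $(i\p_\tau+\Delta_{\tilde A}+\tilde q)\tilde v=0$, the gauge and boundary-trace hypotheses on $A_j$ transfer verbatim to $\tilde A_j$, and dividing the DN map by $c(t(\tau))$ yields equality of the standard full-data DN maps; one then invokes \cite{eskin_inverse_2008,KianSoccorsi}. The paper instead keeps the leading $c(t)$ in place and builds GO solutions with $c$-dependent phase $\Phi=\rho(x\cdot\omega/\sqrt{c(t)}-\rho t)$ (Proposition~\ref{prop_app_go}), then reruns the Fourier-transform argument of Section~\ref{sec:linear}. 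Your reduction is shorter and uses off-the-shelf results; the paper's approach is self-contained and makes the $c(t)$-dependent GO machinery explicit.

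Your Step~1, however, is genuinely different and carries a real gap. You propose a boundary-layer WKB parametrix $e^{i\lambda\eta'\cdot x'-i\lambda^2\mu_0 t-\lambda\kappa_j(t)\rho}$ and read off $c_j$ from the principal symbol of the DN map. The eikonal computation is right, but you yourself flag the obstruction: you need the remainder $r_j^{(\lambda)}$ to contribute $o(\lambda)$ to $c_j\,\p_\nu v_j|_\Sigma$, uniformly as $\lambda\to\infty$, and you need the local parametrix (defined only in a coordinate chart) to be globalized without destroying this. For the dynamic Schr\"odinger equation the available energy estimate (Proposition~\ref{prop_app_wellpose}) controls $r_j$ only in $H^{1,2}(Q)$ in terms of $\|L_{c_j,A_j,q_j}v_j^{(\lambda)}\|_{H^{1,0}(Q)}$, and this source is not obviously small in $\lambda$ once you cut off in $(t,x')$; the trace of $\p_\nu r_j$ is then only bounded, not $o(\lambda)$. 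This is not fatal in principle, but it is the entire content of the step and you have not supplied it. The paper sidesteps this altogether: it derives an integral identity (Proposition~\ref{prop_app_integral}) with left side $\langle (c_1-c_2)\nabla v_1,\nabla v_0\rangle_{L^2(Q)}$, then plugs in \emph{singular} solutions $v_j=\tilde\zeta(t)\Phi_y(x)+R_j$ with $\Phi_y$ the Newtonian potential (or $\p_2\ln|x-y|$ for $n=2$) with pole $y\notin\overline\Omega$ at distance $r$ from $\p\Omega$. As $r\to 0$ the $\|\nabla\Phi_y\|_{L^2}^2\sim r^{2-n}$ term dominates all others (Proposition~\ref{prop:estimate Phi}), forcing $c_1=c_2$ by contradiction. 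This argument uses only the $L^\infty(0,T;H^1)$ bound of Remark~\ref{remark_wellposed_c} on the correctors $R_j$ and requires no boundary-layer analysis.
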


The main contributions of this paper are as follows. 
Our first theorem generalizes \cite{lai_partial_2023}, where a power type nonlinear term was studied, to a mixed higher order nonlinearity. The crucial idea is constructing GO solutions to the linear magnetic Schr\"odinger equations with carefully chosen leading amplitudes concentrating near designed straight lines. Through their interactions, we are able to decouple different degrees and types of nonlinear terms in an integral identity. 
{To make the idea clearer, let us assume that $A$ and $q$ have been recovered and  take $\sigma+\beta=2$ for example. Let $f_1,\, f_2\in \widetilde{\mathcal{H}}^{2\kappa + \frac{3}{2}}_0(\Sigma)$. For small enough $\varepsilon=(\varepsilon_1,\, \varepsilon_2)$, we have $f=\varepsilon_1 f_1+\varepsilon_2 f_2\in \mathcal{D}_\delta(\Sigma)$. We denote $u_{j,\varepsilon}$ be the unique solution to \eqref{eq_IBVP} with $B_{j,\sigma\beta}$ subject to the boundary condition $f_\varepsilon$ for $j=1,\,2$. Thus, 
$$
    w_j:=\p_{\varepsilon_1}\p_{\varepsilon_2} u_{j,\varepsilon}|_{\varepsilon=0} 
$$
solves a linearized equation of \eqref{eq_IBVP}:
$$
(i\p_t+\Delta_A+q) w_j =
2B_{j,20}v_1v_2+ B_{j,11}(v_1\overline{v_2}+\overline{v_1}v_2) $$ 
with trivial boundary and initial data,
where $v_\ell$ satisfies $(i\p_t+\Delta_A+q) v_\ell=0$ with $v_\ell|_{\Sigma} = f_\ell$ for $\ell=1,\,2$.
From mixed derivative of the DN maps yields $$\p_\nu w_1=\p_\nu w_2\quad \hbox{ on }\Sigma^\sharp.$$ 
By subtracting the equations for $w_j$ above and 
integrating the resulting equation with $v_0$, which is the solution to the adjoint problem, we obtain
\begin{align*} 
0 = \bigintsss_Q  \LC 2(B_{1,20}-B_{2,20}) v_1v_2+ (B_{1,11}-B_{2,11})(v_1\overline{v_2}+\overline{v_1}v_2) \RC\overline{v_0} \rd x \rd t.
\end{align*}
With suitably chosen GO solutions, only the phase of the product of GO solutions $v_1v_2\overline{v_0}$ vanishes such that one can uniquely recover $B_{j,20}$ at each intersection point of the concentrating amplitudes. The other coefficient $B_{j,11}$ can be determined similarly by choosing different set of GO solutions with vanishing phase on $v_1\overline{v_2v_0}$. 
We refer to Section~\ref{sec:nonlinear} for more comprehensive discussions and Proposition~\ref{prop_int_id_B_m} for higher order cases $\sigma+\beta\geq 3$.}

Second, we prove the well-posed result for the linear magnetic Schr\"odinger equation with leading time-dependent coefficient. We also show that both leading coefficient and linear electromagnetic coefficients can be uniquely recovered. This extends earlier results in \cite{Bella-Fraj2020, KianSoccorsi} for $c=1$.

\subsection{Outline}
The rest of the paper is organized as follows. Section~\ref{sec:preliminary} introduces fundamental spaces and notations, followed by the discussion of well-posed results for IBVP for both linear and nonlinear magnetic Schr\"odiner equations. 
In Section~\ref{section_geo_sol}, we construct two types of geometric optics (GO) solutions of higher order regularity to the linear magnetic Schr\"odinger equation with \textit{non-localized amplitudes} and \textit{localized amplitudes}. 
Section~\ref{sec:linear} discusses unique determination of the electric and magnetic potentials by applying the first type of GO solutions. 
The nonlinear coefficient is recovered in Section~\ref{sec:nonlinear} with the localized GO solutions.
Finally, an inverse problem for the linear magnetic Schr\"odinger equation \eqref{eq_app_IBVP} with time-dependent coefficients is investigated in Appendix~\ref{sec:appendix}.

\vskip1cm

\section{Preliminary results and the forward problem}\label{sec:preliminary}
We will start by introducing several function spaces and notations that will be used throughout this paper. As preparation for the study of the well-posedness problem for the nonlinear magnetic Schr\"odinger equation in \eqref{eq_IBVP}, we show there exists a unique solution for a linear magnetic Schr\"odinger equation in a suitable space beforehand.

\subsection{Function spaces and notations}\label{section_notation}
Let $Z$ be a Banach space with norm $\|\cdot\|$. The space $L^2(0,T;Z)$ consists of all measurable functions $u:[0,T]\rightarrow Z$ with norm
$$
\|u\|_{L^2(0,T;Z)} :=\LC\int^T_0 \|u(t,\cdot)\|_{Z}^2dt\RC^{1/2}<\infty.
$$
Let $r$ and $s$ be non-negative real numbers and let the notation $X$ be either $\Omega$, $\p \Omega$ or $\Gamma$. We define the following space:
\[
H^{r,s}((0,T)\times X): = H^r(0,T;L^2(X))\cap L^2(0,T;H^s(X)),
\]
which is a Hilbert space equipped with the norm 
\[
\|u\|_{H^{r,s}((0,T)\times X)}:=\|u\|_{H^r(0,T;L^2(X))} +\|u\|_{L^2(0,T;H^s(X))}<\infty.
\]
We write $H^{r,s}(Q)$, $H^{r,s}(\Sigma)$, $H^{r,s}(\Sigma^\sharp)$ instead of $H^{r,s}((0,T)\times \Omega)$, $H^{r,s}((0,T)\times\p \Omega)$, $H^{r,s}((0,T)\times\p \Gamma)$, respectively. 
Based on \cite[Proposition 2.3, Chapter 4]{lions_non-homogeneous_1972}, 
we may identify $H^{r,r}(Q)$, $H^{r,r}(\Sigma)$, $H^{r,r}(\Sigma^\sharp)$ with the standard Sobolev space $H^r(Q)$, $H^r(\Sigma)$, $H^{r}(\Sigma^\sharp)$, respectively, and their equipped norms are equivalent, i.e.,
\[
\|\cdot\|_{H^{r,r}(Q)} \sim \|\cdot\|_{H^{r}(Q)}, \quad \|\cdot\|_{H^{r,r}(\Sigma)} \sim \|\cdot\|_{H^{r}(\Sigma)}, \quad \|\cdot\|_{H^{r,r}(\Sigma^\sharp)} \sim \|\cdot\|_{H^{r}(\Sigma^\sharp)}.
\]

Let $k\in \mathbb{N}:=\{1,2,\cdots\}$ 
and for $\tau =0,\, T$, we introduce the spaces 
\[
\mathcal{H}_\tau^{k}(Q): = \{g\in H^{k}(Q): \p_t^ \ell  g(\tau,\cdot)= 0\quad \hbox{in }\Omega \hbox{ for } \ell \le k-1,\,  \ell \in \mathbb{N}\},
\]
and 
\[
\widetilde{\mathcal{H}}_\tau^{k+\frac{1}{2}}(\Sigma)
: = \{g\in H^{k+\frac{1}{2}}(\Sigma):  \p_t^ \ell  g(\tau,\cdot)= 0\quad\hbox{on }\p\Omega \hbox{ for }  \ell  \le k,\,  \ell \in \mathbb{N}\}.
\]

In addition, following the notations in \cite[Remark 9.5, Section 9.4, Chapter 1]{lions_non-homogeneous_1972-1}, we also define the space $H^{1}(0,T;H^{-1}(\Omega))$ equipped with the norm 
\[
\|u\|_{H^{1}(0,T;H^{-1}(\Omega))}:=\|u\|_{L^2(0,T;H^{-1}(\Omega))} + \|\p_tu\|_{L^2(0,T;H^{-1}(\Omega))}<\infty.
\]
Finally, for the purpose of establishing the well-posed results later, for $\delta>0$ and $\kappa>\frac{n+1}{2}$, we denote $\mathcal{D}_\delta(\Sigma)$ to be the $\delta$-neighborhood of the origin in the space $\widetilde{\mathcal{H}}_0^{2\kappa+\frac{3}{2}}(\Sigma)$:
\begin{align}\label{eq_defn_D}
\mathcal{D}_\delta(\Sigma): = \{f\in \widetilde{\mathcal{H}}^{2\kappa + \frac{3}{2}}_0(\Sigma): \|f\|_{H^{2\kappa+\frac{3}{2}}(\Sigma)}<\delta\}.
\end{align}

\subsection{Well-posedness for the linear magnetic Schr\"odinger equation (MSE)} 
We will prove that there exists a unique solution to the IBVP for the linear  MSE with higher order energy estimate. This will be the basic fact needed later in the proof of well-posedness problem for the nonlinear magnetic Schr\"odinger equation.
In addition, the higher-order energy estimate also plays an essential role in constructing geometric optics solutions with continuous reminder terms.

Throughout this section, we assume that $c(t)\in C^1([0,T];\R)$ satisfies $c(t)>c_0>0$ for some positive constant $c_0$ and suppose that $A\in C^2(\overline{Q};\R^n)$, and $q\in C^1(\overline{Q};\R)$, unless otherwise indicated.
In what follows, the notations $p\lesssim q$ or $q \gtrsim p$ will be used to indicate $p\le C q$, where $C$ is a positive constant depending only on $n$, $c$, $A$, $q$, $B_{\sigma\beta}$, $\Omega$, or $T$.

Let the notation $Y$ be either $\Omega$, $\Sigma$ or $Q$. We denote the natural $L^2$ product for scalar/vector-valued functions by 
\[
\LA U,V \RA_{L^2(Y)} =\sum_{i=1}^n\LA u_i,v_i \RA_{L^2(Y)}, \quad \hbox{where}\quad \LA u_i, v_i\RA_{L^2(Y)}= \int_Y u_i \overline{v_i}\,,\quad 
\]
for $U=(u_1,\cdots,u_n),\, V=(v_1,\cdots,v_n)\in (L^2(Y))^n$,
with the induced $L^2$-norm  $\|\cdot\|_{L^2(Y)}$ on $Y$.

We prove the energy estimate for a more general linear magnetic Schr\"odinger of the following form
\[
L_{c,A,q}v: = i\p_tv + c(t)\Delta_{A(t,x)}v+ q(t,x)v.
\]
Here $\Delta_A = \nabla_A\cdot\nabla_A$,  with the magnetic gradient $\nabla_A$ given by
$$
\nabla_A u(t,x) := (\nabla+iA(t,x))u(x)\quad\hbox{ for }u \in H^1(\Omega),\quad (t,x)\in Q.
$$
When $c(t)= 1$, we denote $L_{A,q}: = L_{1,A,q}$ for simplicity.

Let us define the sesquilinear form
\begin{align}\label{eq_defn_a}
\mathfrak a(t;u,v) := c(t)\int_{\Omega}\nabla_{A}u(t,x)\cdot\overline{\nabla_Av(t,x)}\,dx -\int_\Omega q(t,x)u(x)\overline{v(x)}\,dx, \quad \hbox{for }u,\,v\in H^1_0(\Omega),
\end{align} 
which satisfies
\[
\mathfrak{a}(t;u,v)=\overline{\mathfrak{a}(t;v,u)}.
\]
For $u\in H^1_0(\Omega)$, triangle inequality yields 
\[
\|\nabla_Au(t,\cdot)\|^2_{L^2(\Omega)} \ge \frac{\|\nabla u \|_{L^2(\Omega)}^2}{2} - \|A\|_{L^\infty(Q)}^2 \|u \|^2_{L^{2}(\Omega)},
\]
and, therefore, for each $t\in (0,T)$, 
we get 
\begin{equation}\label{eq_a_coercive}
\mathfrak a(t;u,u) + C_1\|u \|^2_{L^2(\Omega)} \geq C_2\|u \|^2_{H^1(\Omega)},
\end{equation}
where $C_1 =  c_0\|A\|^2_{L^\infty(Q)} + \|q\|_{L^\infty(Q)} + \frac{c_0}{2}$, and $C_2= \frac{c_0}{2}$.
Let us also define
\begin{equation}\label{eq_a_deri_def}
\begin{aligned}
\mathfrak a'(t;u,v) :=\,& c'(t)\LA \nabla_{A(t,\cdot)} u  ,{\nabla_{A(t,\cdot)}v }\RA_{L^2(\Omega)}
-\LA\p_tq(t,\cdot)u ,{v } \RA_{L^2(\Omega)}
\\
&+ ic(t) \LA  \p_t{A(t,\cdot)}   u , \nabla_{A(t,\cdot)}v \RA_{L^2(\Omega)}  -  ic(t)\LA \p_t{A(t,\cdot)}{\cdot }\nabla_{A(t,\cdot)}u  , v \RA_{L^2(\Omega)},
\end{aligned}  
\end{equation}
which, by H\"older's inequality, satisfies
\begin{align}\label{eq_a_deri_bdd}
|\mathfrak a'(t;u,u)|\lesssim \|u\|_{H^1(\Omega)}^2 \quad \hbox{for }t\in (0,T),\quad u\in H^1_0(\Omega).
\end{align}

In addition, through applying the integration by parts to \eqref{eq_defn_a} and \eqref{eq_a_deri_def}, for $u,\,v\in H^1_0(\Omega)$, we obtain the following identities: 
\begin{equation}\label{eq_a_ibp}
\begin{aligned}
\mathfrak a(t,u,v) &= c(t)\LA \nabla u,\nabla v\RA_{L^2(\Omega)} -2ic(t)\LA A(t,\cdot)\cdot\nabla u,v\RA_{L^2(\Omega)} 
\\
&\quad-ic(t)\LA (\nabla\cdot A(t,\cdot))u,v\RA_{L^2(\Omega)} +c(t)\LA |A(t,\cdot)|^2  u,v\RA_{L^2(\Omega)}-\LA q(t,\cdot) u,v\RA_{L^2(\Omega)},  
\end{aligned}
\end{equation}
and
\begin{equation}\label{eq_a_deri_ibp}
\begin{aligned}
\mathfrak a'(t,u,v) &=c'(t)\LA \nabla  u  ,{\nabla v }\RA_{L^2(\Omega)}-2i  \LA \p_t(cA)(t,\cdot)\cdot\nabla u,v\RA_{L^2(\Omega)} -i \LA (\nabla\cdot \p_t) (cA)(t,\cdot) u,v\RA_{L^2(\Omega)} 
\\
&\quad + c'(t)\LA |A(t,\cdot)|^2 u,v\RA_{L^2(\Omega)} + \LA \LC c \p_t |A(t,\cdot)|^2 - \p_t q(t,\cdot) \RC u,v\RA_{L^2(\Omega)},
\end{aligned}
\end{equation}
which will be used in the proof for unique existence of the solution for the linear MSE right below.

\begin{theorem}\label{thm_linear_wellposedness}
Suppose that $0<c_0<c(t)\in C^1([0,T];\R)$, $A\in C^2(\overline{Q};\R^n)$, and $q\in C^1(\overline{Q};\R)$. We consider the following IBVP for the linear magnetic Schr\"odinger equation 
\begin{align}\label{eq_linear_IBVP}
\left\{\begin{array}{rrll}
(i\p_t + c(t)\Delta_A+ q)v  &=& F  &\hbox{ in } Q,\\
v  &=& 0  &\hbox{ on } \Sigma,\\
v &=& 0  &\hbox{ in } \{t=0\}\times\Omega.
\end{array}  \right.
\end{align}
\begin{itemize}
\item[(a)] Let $F\in H^{1}(0,T;H^{-1}(\Omega))$ be such that 
$F(0,\cdot)= 0$ in $\Omega$. Then the IBVP problem \eqref{eq_linear_IBVP} has a unique solution $v\in L^2(0,T;H^1_0(\Omega))\cap H^{1}(0,T;H^{-1}(\Omega))$ satisfying 
\begin{align}\label{eq_linear_est_weak}
\|v\|_{L^2(0,T;H^1(\Omega))}\lesssim \|F\|_{H^{1}(0,T;H^{-1}(\Omega))}.
\end{align}
\item[(b)] Let $F\in H^{1,0}(Q)$ satisfy $F(0,\cdot)= 0$ in $\Omega$. Then the IBVP problem \eqref{eq_linear_IBVP} has a unique solution $v\in H^{1,2}(Q)$ satisfying 
\begin{align}\label{eq_linear_IBVP_est}
\|v\|_{H^{1,2}(Q)}\lesssim \|F\|_{H^{1,0}(Q)}.
\end{align}
\end{itemize}
\end{theorem}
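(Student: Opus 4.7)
The plan is to build $v$ by Galerkin approximation and to derive the claimed bounds from an energy identity adapted to the Hermitian structure of $\mathfrak a$, together with the preparatory identities \eqref{eq_a_coercive}--\eqref{eq_a_deri_ibp}.

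First I would fix the orthonormal basis $\{w_k\}_{k\ge 1}$ of $L^2(\Omega)$ consisting of Dirichlet eigenfunctions of $-\Delta$ and set $V_m=\mathrm{span}\{w_1,\dots,w_m\}$. The Galerkin ansatz $v_m(t,x)=\sum_{k=1}^m d_{km}(t)w_k(x)$ reduces the IBVP to the finite-dimensional linear ODE system
\[
i\langle \p_t v_m(t),w_k\rangle_{L^2(\Omega)} - \mathfrak a(t;v_m(t),w_k) = \langle F(t),w_k\rangle_{H^{-1}(\Omega),H^1_0(\Omega)},\qquad k=1,\dots,m,
\]
with $v_m(0)=0$, which is uniquely solvable on $[0,T]$ by Cauchy--Lipschitz since the coefficients are $C^1$ in $t$.

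The crux of Part (a) is an energy identity for $v_m$. Testing against $\varphi=\p_t v_m\in V_m$ and taking real parts gives $\mathrm{Re}\,\mathfrak a(t;v_m,\p_t v_m)=-\mathrm{Re}\langle F,\p_t v_m\rangle$. Combined with the Hermitian identity $\p_t\mathfrak a(t;v_m,v_m)=\mathfrak a'(t;v_m,v_m)+2\mathrm{Re}\,\mathfrak a(t;v_m,\p_t v_m)$ this yields
\[
\p_t \mathfrak a(t;v_m,v_m) = \mathfrak a'(t;v_m,v_m) - 2\mathrm{Re}\langle F,\p_t v_m\rangle.
\]
Integrating in $t$ and integrating by parts in time on the right-hand side -- using $v_m(0)=0$ and $F(0)=0$ -- converts $\int_0^t\langle F,\p_s v_m\rangle\,ds$ into $\langle F(t),v_m(t)\rangle - \int_0^t\langle \p_s F,v_m\rangle\,ds$, so that no time derivative of $v_m$ remains on the right. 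Applying coercivity \eqref{eq_a_coercive} on the left, \eqref{eq_a_deri_bdd} on $\mathfrak a'(\cdot;v_m,v_m)$, and Cauchy--Schwarz together with Young's inequality on the dualities involving $F$ and $\p_t F$, one obtains a pointwise Gr\"onwall-type inequality for $\|v_m(t)\|_{H^1}^2$. This is complemented by the elementary $L^2$ identity $\tfrac12\p_t\|v_m\|_{L^2}^2 = \mathrm{Im}\langle F,v_m\rangle$, which bounds $\|v_m\|_{L^\infty(L^2)}^2$ in terms of $\|F\|_{L^2(H^{-1})}\|v_m\|_{L^2(H^1)}$; a final Young's inequality closes the combined bound into $\|v_m\|_{L^2(0,T;H^1)}\lesssim\|F\|_{H^1(0,T;H^{-1})}$ uniformly in $m$. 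Weak compactness, together with the equation itself to place $\p_t v\in L^2(0,T;H^{-1})$, produces the solution $v$, and uniqueness follows from the same $L^2$ identity applied to the difference of two solutions.

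For Part (b), the stronger assumption $F\in H^{1,0}(Q)$ embeds into $H^1(0,T;H^{-1}(\Omega))$ with $F(0)=0$, so Part (a) already supplies $v\in L^2(0,T;H^1_0(\Omega))\cap H^1(0,T;H^{-1}(\Omega))$. Differentiating the IBVP in $t$, the function $w:=\p_t v$ solves a magnetic Schr\"odinger equation with source $G = \p_t F - c'\Delta_A v - c[\p_t,\Delta_A]v - (\p_t q)v$, and $v(0)=F(0)=0$ forces $w(0)=0$. A parallel Galerkin energy estimate for $w$ (handling the borderline second-order piece $c'\Delta_A v$ in $G$ by testing the original equation against $-\Delta v_m$, which remains in $V_m$ by the eigenfunction structure) controls $w$ in $L^\infty(0,T;L^2(\Omega))\hookrightarrow L^2(Q)$, with the new ingredient being $\p_t F\in L^2(Q)$. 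Once $\p_t v\in L^2(Q)$ is in hand, rewriting the equation as $c\Delta_A v = F - qv - i\p_t v$ puts the right-hand side in $L^2(Q)$, and elliptic regularity for the magnetic Laplacian with Dirichlet data upgrades $v$ to $L^2(0,T;H^2(\Omega)\cap H^1_0(\Omega))$, giving $\|v\|_{H^{1,2}(Q)}\lesssim\|F\|_{H^{1,0}(Q)}$.

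The main obstacle is the first step of Part (a): because the Schr\"odinger semigroup is unitary on $L^2$ and so does not improve spatial regularity, one cannot extract an $L^2(H^1)$ bound on $v$ from an $L^2(H^{-1})$ bound on $F$ alone; it is precisely the combination $F\in H^1(H^{-1})$ and $F(0)=0$ that unlocks the integration-by-parts-in-time manoeuvre transferring a time derivative off $\p_t v_m$ onto $F$, after which coercivity of $\mathfrak a$ modulo an $L^2$ correction closes the estimate. A secondary technical point, appearing in Part (b), is the $c'\Delta_A v$ contribution in the equation for $w=\p_t v$, which is a priori only in $L^2(H^{-1})$ and must be tamed by the bootstrap between $\p_t v\in L^2(Q)$ and $v\in L^2(H^2)$ described above.
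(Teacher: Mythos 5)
Your Part (a) is essentially the paper's proof: the same Faedo--Galerkin construction, the same two energy identities (imaginary parts against $v_m$ for the $L^2$ bound, real parts against $\p_t v_m$ combined with $\frac{d}{dt}\mathfrak a(t;v_m,v_m)=\mathfrak a'(t;v_m,v_m)+2\Re\,\mathfrak a(t;v_m,\p_t v_m)$ for the $H^1$ bound), the same integration by parts in time to move the derivative off $\p_t v_m$ onto $F$, and the same coercivity/Gr\"onwall/weak-compactness conclusion. Part (b) also follows the paper's architecture (differentiate in $t$, estimate $w=\p_t v$ in $L^2$, then elliptic regularity for $c\Delta_A v=F-qv-i\p_t v$), and you correctly single out the dangerous term coming from $c'\Delta_A v$.

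However, the specific device you propose for that term does not close. Testing the original equation against $-\Delta v_m$ and taking imaginary parts produces $\tfrac12\p_t\|\nabla v_m\|^2$ on one side but leaves terms of the form $\langle F,\Delta v_m\rangle$ and $\langle A\cdot\nabla v_m,\Delta v_m\rangle$ on the other, so you need a bound on $\|\Delta v_m\|_{L^2}$; taking real parts instead gives $c\|\Delta v_m\|^2\lesssim |\langle\nabla v_m,\nabla w_m\rangle|+\dots$, which is exactly the quantity you were trying to control. In either case the argument is circular, because $\|\Delta v_m\|_{L^2}$ only becomes available \emph{after} $\p_t v\in L^2(Q)$ is known, via the final elliptic-regularity step. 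The correct (and the paper's) maneuver is to test the \emph{original} Galerkin equation against $w_m=\p_t v_m$: this yields $\mathfrak a(t;v_m,w_m)=i\|w_m\|_{L^2}^2-\langle F,w_m\rangle_{L^2}$, and expanding $\mathfrak a(t;v_m,w_m)=c\langle\nabla v_m,\nabla w_m\rangle+O(\|v_m\|_{H^1}\|w_m\|_{L^2})$ via \eqref{eq_a_ibp} isolates
\[
\LV\LA \nabla v_m,\nabla w_m\RA_{L^2(\Omega)}\RV\lesssim \frac{1}{c_0}\LC\|v_m\|_{H^1(\Omega)}^2+\|w_m\|_{L^2(\Omega)}^2+\|F\|_{L^2(\Omega)}^2\RC,
\]
with no second-order norm of $v_m$ appearing. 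Substituting this into the energy identity for $w_m$ closes the Gr\"onwall argument. With that one replacement your proof of (b) goes through exactly as in the paper.
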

\begin{proof}[Proof of Theorem \ref{thm_linear_wellposedness} (a)]
The proof is based on a modification of \cite[Theorem 10.1, Chapter 3]{lions_non-homogeneous_1972-1} by applying the Faedo-Galerkin method, see also \cite{KianSoccorsi}. We will first construct approximate solutions to \eqref{eq_linear_IBVP} in finite dimensions and then establish the associated stability estimates, which enable us to prove existence and uniqueness for \eqref{eq_linear_IBVP}.  

\textbf{Step 1: Construction of approximate solutions.}
We start by showing that there exists a unique weak solution to the IBVP \eqref{eq_linear_IBVP} through solving a finite-dimensional approximation. 
In doing so, we choose $\{e_k\}_{k\in\mathbb{N}}$ being a Hilbert basis of $H^1_0(\Omega)$ such that it is an orthonormal basis of $L^2(\Omega)$ and an orthogonal basis of $H^1_0(\Omega)$. For each $m\in \mathbb{N}$, we define the approximation solutions 
\begin{align}\label{eq_thm21_v_m_construction}
v_m(t,x):=\sum^m_{k=1} g_{k,m}(t) e_k(x), \quad(t,x)\in Q.
\end{align}
Here the functions $g_{k,m}(t)$ are selected so that  
\begin{equation}\label{eq_thm21_v_m}
\LA i\p_t v_m(t,\cdot),e_k\RA_{L^2(\Omega)}  -\mathfrak 
a(t;v_m(t,\cdot),e_k) = \LA F(t,\cdot),e_k\RA_{H^{-1}(\Omega),H^1_0(\Omega)}, \quad 
v_{m}(0,\cdot) = 0
\end{equation}
for $t\in(0,T)$ and $k=1,\cdots, m$. 
Here $\LA\cdot,\cdot\RA_{H^{-1}(\Omega),H^{1}_0(\Omega)}$ denotes the duality between $H^{-1}(\Omega)$ and $H^1_0(\Omega)$.
More precisely, $g_{k,m}(t)$ would satisfy
\begin{align}\label{eq_thm21_g_km}
i g_{k,m}'(t) - \sum^{m}_{\ell=1} g_{\ell,m}(t) \mathfrak a(t;e_\ell,e_k) = F_k(t)\quad \hbox{ with }\quad g_{k,m}(0) = \LA v_m(0,\cdot),e_k \RA_{L^2(\Omega)}=0,
\end{align}
where $F_k(t): =\LA F(t,\cdot),e_k\RA_{H^{-1}(\Omega),H^1_0(\Omega)}\in H^{1}(0,T)$. By Carath\'eodory's Theorem, there exists a unique $\{g_{k,m}\}_{k=1}^{m}\in W^{1,\infty}(0,T)$ solving the above system since $F_k\in H^1(0,T)\subset W^{1,1}(0,T)$, see \cite[Theorem A.50]{john_finite_2016}. Therefore, for $F\in W^{1,1}(0,T;H^{-1}(\Omega))$, there exists a unique solution $v_m\in W^{1,\infty}(0,T;H^1_0(\Omega))$ solving \eqref{eq_thm21_v_m}.

\textbf{Step 2: An $L^2$-estimate for $v_m$.} 
We multiply the equation in \eqref{eq_thm21_v_m} by $\overline{g_{k,m}}$. Summing over $k$ from $1$ to $m$ yields that
\[
\LA i\p_t v_m(t,\cdot),v_m(t,\cdot)\RA_{L^2(\Omega)}  -\mathfrak a(t;v_m(t,\cdot),v_m(t,\cdot)) = \LA F(t,\cdot),v_m(t,\cdot)\RA_{H^{-1}(\Omega),H^1_0(\Omega)}, \quad t\in (0,T).
\]
Then we take twice the imaginary parts on both sides and obtain
\begin{equation}\label{eq_thm21_3}
\p_t\|v_m(t,\cdot)\|_{L^2(\Omega)}^2 = 2\Im \LA F(t,\cdot),v_m(t,\cdot) \RA_{H^{-1}(\Omega),H^1_0(\Omega)}.
\end{equation}
Since $v_m(0,\cdot) = 0$ in $\Omega$, integrating the above equation over $(0,t)$ gives
\begin{align}\label{vm L2}
\|v_m(t,\cdot)\|_{L^2(\Omega)}^2  \lesssim \int_0^t\|v_m(s,\cdot)\|^2_{H^1(\Omega)}\rd s +  \int_0^t \|F(s,\cdot)\|^2_{H^{-1}(\Omega)} \rd s.
\end{align}

\textbf{Step 3: An $H^1$-estimate for $v_m$.}
Let us multiply the equation in \eqref{eq_thm21_v_m} by $\overline{g'_{k,m}}$ and sum over $k$ from $1$ to $m$. Now we have
\[
\LA i\p_t v_m(t,\cdot),\p_t v_m(t,\cdot)\RA_{L^2(\Omega)} -\mathfrak a(t;v_m(t,\cdot),\p_tv_m(t,\cdot)) = \LA F(t,\cdot),\p_t v_m(t,\cdot) \RA_{H^{-1}(\Omega),H^1_0(\Omega)}, \quad t\in (0,T),
\]
from which, 
taking twice the real parts on both sides, we get 
\begin{align}\label{ID:linear form a}
-\mathfrak a(t;v_m,\p_tv_m) - \mathfrak a(t;\p_tv_m,v_m) = 2\Re \LA F(t,\cdot),\p_t v_m(t,\cdot) \RA_{H^{-1}(\Omega),H^1_0(\Omega)}.
\end{align}
Combining with \eqref{eq_a_deri_def}, we can write
\begin{equation}\label{eq_thm21_5}
\frac{d}{ds}\mathfrak a(s;v_m(s,\cdot),v_m(s,\cdot)) = \mathfrak a'(s;v_m(s,\cdot),v_m(s,\cdot)) -  2\Re \LA F(s,\cdot),\p_t v_m(s,\cdot) \RA_{H^{-1}(\Omega),H^1_0(\Omega)}.
\end{equation}
Using $v_m(0,\cdot) = 0$ in $\Omega$ and \eqref{eq_a_deri_bdd}, we integrate \eqref{eq_thm21_5} over $(0,t)$ and get
\begin{equation}\label{eq_thm21_6}
\mathfrak a(t;v_m(t,\cdot),v_m(t,\cdot))\lesssim \int_0^t 
\mathfrak \|v_m(s,\cdot)\|^2_{H^1(\Omega)} \rd s +  2 \LV \int_0^t \LA F(s,\cdot),\p_s v_m(s,\cdot) \RA_{H^{-1}(\Omega),H^1_0(\Omega)}\rd s \RV.
\end{equation}
In addition, by applying the integration by parts, we control the second term on the right as follows:
\begin{align*}
&2 \LV \int_0^t \LA F(s,\cdot),\p_s v_m(s,\cdot) \RA_{H^{-1}(\Omega),H^1_0(\Omega)}\rd s \RV
\\
=\,& 2\LV\LA F(t,\cdot),v_m(t,\cdot) \RA_{H^{-1}(\Omega),H^1_0(\Omega)} -\int_0^t \LA \p_sF(s,\cdot),v_m(s,\cdot) \RA_{H^{-1}(\Omega),H^1_0(\Omega)}\rd s \RV
\\
\le\,& \frac{1}{\varepsilon_0}\|F(t,\cdot)\|^2_{H^{-1}(\Omega)} + {\varepsilon_0}\|v_m(t,\cdot)\|^2_{H^1(\Omega)} + \int_0^t\|\p_sF(s,\cdot)\|^2_{H^{-1}(\Omega)}\rd s + \int_0^t \|v_m(s,\cdot)\|^2_{H^1(\Omega)}\rd s ,
\end{align*}
where $\varepsilon_0>0$ is an arbitrary constant. Here we also used the fact that $v_m(0,\cdot)= 0$ in $\Omega$.
Together with \eqref{eq_a_coercive}, \eqref{vm L2},
and \eqref{eq_thm21_6}, the term $\varepsilon_0\|v_m(t,\cdot)\|^2_{H^1(\Omega)}$ will be absorbed by $C_2\|v_m(t,\cdot)\|^2_{H^1(\Omega)}$ in \eqref{eq_a_coercive} for sufficiently small $\varepsilon_0$; moreover, we can then derive
\begin{equation}\label{eq_thm21_7}
\|v_m(t,\cdot)\|^2_{H^1(\Omega)} \lesssim 
\int_0^t\|v_m(s,\cdot)\|^2_{H^1(\Omega)}\rd s  + \|F(t,\cdot)\|^2_{H^{-1}(\Omega)} + \|F\|^2_{H^1(0,T;H^{-1}(\Omega))}. 
\end{equation}
Lastly, integrating \eqref{eq_thm21_7} over $(0,\tau)$ for $\tau\in (0,T)$ yields
\[
\int_0^\tau\|v_m(t,\cdot)\|^2_{H^1(\Omega)}\rd t\lesssim \int_0^\tau\int_0^t\|v_m(s,\cdot)\|^2_{H^1(\Omega)}\rd s\rd t + (1+T)\|F\|^2_{H^1(0,T;H^{-1}(\Omega))},
\]
which implies 
\begin{align}\label{eq_prop_v_m_est}
\|v_m\|_{L^2(0,T;H^1(\Omega))} \lesssim \|F\|_{H^{1}(0,T;H^{-1}(\Omega))}.
\end{align}thanks to Gronwall's lemma.

\textbf{Step 4: Existence and uniqueness.}
Using \eqref{eq_prop_v_m_est}, Banach-Alaoglu Theorem implies there exists a subsequence of $\{v_m\}_{k\in \mathbb{N}}$, denoted again by $\{v_m\}_{k\in \mathbb{N}}$, converging weakly to some $v\in L^{2}(0, T; H^1_0(\Omega))$ in $L^2(0,T;H^1_0(\Omega))$-sense. By sending $m\to \infty$ in \eqref{eq_thm21_v_m}, we get that $v$ satisfies
\[
\left\{\begin{array}{rrll}
(i\p_t  + c(t)\Delta_A + q)v &=& F  &\hbox{ in } Q,\\
v &=& 0  &\hbox{ on } \Sigma,\\
v &=& 0  &\hbox{ on } \{t=0\}\times\Omega ,
\end{array}  \right.
\]
and \eqref{eq_linear_est_weak} holds.
By writing $i\p_t v = F-c(t)\Delta_A v-qv$, we get $\p_t v\in L^{2}(0,T; H^{-1}(\Omega))$, which shows the existence. The proof is complete by noting that uniqueness of the solution follows directly from \eqref{eq_linear_est_weak}.
\end{proof}

\begin{proof}[Proof of Theorem \ref{thm_linear_wellposedness} (b)]
We first follow the construction in the proof of Theorem \ref{thm_linear_wellposedness} (a) by replacing $\LA \cdot,\cdot \RA_{H^{-1}(\Omega),H^1_0(\Omega)}$ by $\LA \cdot,\cdot\RA_{L^2(\Omega)}$. Thus, we obtain $v_m\in W^{1,\infty}(0,T;H^1_0(\Omega))$ of the form \eqref{eq_thm21_v_m_construction} solving 
\begin{align}\label{eq_thm21_b_v_m}
\LA i\p_t v_m(t,\cdot),e_k\RA_{L^2(\Omega)}  -\mathfrak 
a(t;v_m(t,\cdot),e_k) = \LA F(t,\cdot),e_k\RA_{L^2(\Omega)}, \quad 
v_{m}(0,\cdot)= 0
\end{align}
for $t\in(0,T)$ and $k=1,\cdots, m$.
{  
This yields that $w_m := \p_t v_m$ solves the following problem:
\begin{align}\label{eq_thm21_8}
\LA i\p_t w_m(t,\cdot),e_k\RA_{L^2(\Omega)}  -\mathfrak 
a(t;w_m(t,\cdot),e_k) = \mathfrak a'(t,v_m,e_k) + \LA \p_tF(t,\cdot),e_k\RA_{L^2(\Omega)}, \quad 
w_{m}(0,\cdot)= 0.
\end{align}
}

Next, to derive the estimate of $\|v_m\|_{L^2(0,T;H^1(\Omega))}$, we will follow a similar argument as in Step 2 of (a). 
We multiple \eqref{eq_thm21_b_v_m} by $\overline{g_{k,m}}$ and sum it over $k$. Also, integrating over $(0,t)$ leads to
\[
\|v_m(t,\cdot)\|_{L^2(\Omega)}^2  \lesssim \int_0^t\|v_m(s,\cdot)\|^2_{L^2(\Omega)}\rd s +  \int_0^t \|F(s,\cdot)\|^2_{L^2(\Omega)} \rd s.
\]
By Grownwall's lemma, we obtain
\begin{align}\label{eq_thm21_vm_L2}
\|v_m\|_{L^2(Q)} \lesssim \|F\|_{L^{2}(Q)}.
\end{align}
Moreover, as in Step 3 of (a), it is straightforward to derive 
\begin{align}\label{eq_thm21_vm_H1}
\|v_m\|_{L^2(0,T;H^1(\Omega))} \lesssim \|F\|_{H^{1,0}(Q)}.
\end{align}

Moreover, we will bound $\|w_m\|_{L^2(Q)}$ as follows. Using \eqref{eq_a_deri_ibp}, we multiply \eqref{eq_thm21_8} by $\overline{g'_{k,m}}$ and sum over $k$ from $1$ to $m$. This results in
\begin{equation}\label{eq_thm21_9}
\begin{aligned}
&\LA i\p_tw_m(t,\cdot),w_m(t,\cdot) \RA_{L^2(\Omega)} - \mathfrak a (t,w_m(t,\cdot),w_m(t,\cdot)) 
\\
=& \,c'(t)\LA  \nabla v_m(t,\cdot),\nabla w_m(t,\cdot) \RA_{L^2(\Omega)}+ \LA \widetilde{F}_m(t,\cdot),w_m(t,\cdot)\RA_{L^2(\Omega)}, 
\end{aligned}  
\end{equation}
where 
\begin{align*}
\widetilde{F}_m :=&-2i\p_t(cA) \cdot\nabla v_m +\LC -i (\nabla\cdot\p_t)(cA) +c'(t)|A|^2+ 2c(t)\p_tA\cdot A -\p_tq \RC v_m +  \p_tF,
\end{align*}
satisfies
\begin{align}\label{eq_thm21_Fm}
\|\widetilde{F}_m(t,\cdot)\|_{L^2(\Omega)} \lesssim \|v_m(t,\cdot)\|_{H^1(\Omega)}+\|\p_tF(t,\cdot)\|_{L^2(\Omega)}.
\end{align}
Taking twice the imaginary parts and integrating over $(0,t)$, together with $w_m(0,\cdot) = 0$ in $\Omega$, we have from \eqref{eq_thm21_9} that 
\begin{equation}\label{eq_thm21_10}
\|w_m(t,\cdot)\|^2_{L^2(\Omega)}
\lesssim 
\int_{0}^t \LV c'(s)\LA \nabla v_m(s,\cdot),\nabla w_m(s,\cdot)\RA_{L^2(\Omega)}\RV\,ds
+\int_{0}^t\|\widetilde{F}_m(s,\cdot)\|^2_{L^2(\Omega)}
+\|w_m(s,\cdot)\|^2_{L^2(\Omega)}\,ds. 
\end{equation}

It is sufficient to estimate the first term on the right-hand-side of \eqref{eq_thm21_10}. We multiply \eqref{eq_thm21_b_v_m} by $\overline{g'_{k,m}}$ and sum over $k$ from $1$ to $m$, which leads to 
\begin{equation*}
\LA iw_m(t,\cdot),w_m(t,\cdot)\RA_{L^2(\Omega)}  -\mathfrak 
a(t;v_m(t,\cdot),w_m(t,\cdot)) = \LA F(t,\cdot),w_m(t,\cdot)\RA_{L^2(\Omega)}.
\end{equation*}
Substituting the expression \eqref{eq_a_ibp} of $\mathfrak 
a(t;v_m(t,\cdot),w_m(t,\cdot))$ into the above identity yields
\begin{align*}
&\LV c(t)\LA \nabla v_m(t,\cdot),\nabla w_m(t,\cdot) \RA_{L^2(\Omega)} \RV
\\
\lesssim &\LV \LA F(t,\cdot),w_m(t,\cdot)\RA_{L^2(\Omega)}\RV + \| v_m(t,\cdot)\|^2_{H^1(\Omega)} +\|w_m(t,\cdot)\|^2_{L^2(\Omega)},
\end{align*}
by H\"older's inequality.
Since $c(t)>c_0>0$, we then derive
\begin{align*}
\LV\LA \nabla v_m(t,\cdot),\nabla w_m(t,\cdot) \RA_{L^2(\Omega)} \RV 
\lesssim 
\frac{1}{c_0} \LC \| v_m(t,\cdot)\|^2_{H^1(\Omega)} +\|w_m(t,\cdot)\|^2_{L^2(\Omega)} +\|F(t,\cdot)\|^2_{L^2(\Omega)} \RC.
\end{align*}
Together with  \eqref{eq_thm21_Fm} and \eqref{eq_thm21_10}, we finally arrives at
\begin{align}
\|w_m(t,\cdot)\|^2_{L^2(\Omega)} \lesssim \int_0^t \|w_m(s,\cdot)\|^2_{L^2(\Omega)}\,ds + \| v_m \|^2_{L^2(0,T;H^1(\Omega))}+\|F \|^2_{H^{1,0}(Q)}.
\end{align}
Applying Gronwall's lemma and \eqref{eq_thm21_vm_H1}, the above inequality implies 
\begin{align}\label{eq_thm21_13}
\|\p_tv_m\|_{L^2(Q)}=
\|w_m\|_{L^2(Q)}\lesssim
\|w_m\|_{L^\infty(0,T;L^2(\Omega))}\lesssim \|F\|_{H^{1,0}(Q)}.
\end{align}

Lastly, we prove the existence and uniqueness. Argue as in the proof of Theorem~\ref{thm_linear_wellposedness} (a), \eqref{eq_thm21_vm_H1} implies there exists a subsequence of $\{v_m\}_{k\in \mathbb{N}}$, denote again by $\{v_m\}_{k\in \mathbb{N}}$, converging to some $v\in L^2(0,T;H^1_0(\Omega))$ in $L^2(0,T;H^1_0(\Omega))$-sense. By sending $m\to\infty$, we get that $v$ satisfies \eqref{eq_linear_IBVP} and thus $i\p_t v = F-c(t)\Delta_A v-qv \in L^{2}(0,T; H^{-1}(\Omega))$. Also, 
\begin{align}\label{eq_thm21_v_H1}
\|v\|_{L^2(0,T;H^1(\Omega))}\lesssim \|F\|_{H^{1,0}(Q)}.
\end{align}
Applying Banach-Alaoglu Theorem to $w_m$, we get from \eqref{eq_thm21_13} that there is a subsequence of $\{w_m\}_{k\in \mathbb{N}}$ converging to $w\in L^2(Q)$ in $L^2$-sense and 
\begin{align*}
\|w\|_{L^2(Q)}\lesssim \|F\|_{H^{1,0}(Q)}.
\end{align*}
Since $w_m = \p_tv_m$, it leads to $\p_tv = w\in L^2(Q)$ and 
\begin{align}\label{eq_thm21_deri_v_L2}
\|\p_tv\|_{L^2(Q)}\lesssim \|F\|_{H^{1,0}(Q)}.
\end{align}
From \eqref{eq_linear_IBVP}, for each fixed $t\in(0,T)$, $v(t,\cdot)$ satisfies the elliptic equation $c(t)\Delta_A v+q v=F-i\p_t v$ with $v(t,\cdot)=0$ on $\p\Omega$. Since $F-i\p_t v\in L^2(\Omega)$, we have $v(t,\cdot)\in H^2(\Omega)$ by elliptic regularity. Also combining with \eqref{eq_thm21_v_H1} and \eqref{eq_thm21_deri_v_L2}, we derive \eqref{eq_linear_IBVP_est}, which ends the proof. 
\end{proof}

\begin{remark}\label{remark_L2_est}
Note that in \eqref{eq_thm21_vm_L2}, one only needs the $L^2$ boundedness of $F$ to get
the following stability estimate of the solution in the $L^2$ space:
\begin{align}\label{v L2}
\|v\|_{L^2(Q)}\lesssim \|F\|_{L^2(Q)}.
\end{align}
\end{remark}

\begin{remark}\label{remark_wellposed_c}
With additional assumption $F\in L^\infty(0,T;H^{-1}(\Omega))$ in Theorem \ref{thm_linear_wellposedness} (a), we have 
\begin{equation}\label{eq_remark_wellposed_c}
\|v\|_{L^\infty(0,T;H^1(\Omega))}\lesssim 
\|F\|_{L^\infty(0,T;H^{-1}(\Omega))} + \|F\|_{H^1(0,T;H^{-1}(\Omega))},
\end{equation}
which is resulted from applying Gronwall's lemma to \eqref{eq_thm21_7}.
\end{remark}

In the remaining of this section, we will only consider the equation 
$$ 
L_{A,q}u=(i\p_t+\Delta_A+q)u=0
$$ 
with $c(t)\equiv 1$. We will show that the regularity of the well-posed result can be enhanced for this setting.

\begin{prop}\label{prop_linear_est_higher}
Let $\kappa\in \mathbb{N}$.
Suppose that $F\in \mathcal{H}_0^{2\kappa}(Q)$,  
$A \in  C^\infty(\overline{Q};\R^n)$, and $q\in C^\infty(\overline{Q};\R)$ satisfy  
$A=q=0$ on $(0,T)\times\mathcal{O}$.
Then the problem
\begin{align}\label{forward:eq_linear_IBVP}
\left\{\begin{array}{rrll}
(i\p_t+\Delta_A+q)v  &=& F  &\hbox{ in } Q,\\
v  &=& 0  &\hbox{ on } \Sigma,\\
v &=& 0  &\hbox{ on } \{t=0\}\times\Omega,
\end{array}  \right.
\end{align}
has a unique solution $v\in \mathcal{H}_0^{2\kappa}(\Omega)$ satisfying 
\begin{equation}\label{eq_linear_est_higher}
\|v\|_{H^{2\kappa}(Q)} \lesssim \|F\|_{H^{2\kappa}(Q)}.
\end{equation}
\end{prop}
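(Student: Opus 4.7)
The plan is to extend the Galerkin scheme of Theorem \ref{thm_linear_wellposedness}(b) to arbitrary orders in $t$ by repeatedly differentiating the equation, and then to recover the missing spatial regularity through elliptic estimates on each time slice. Since $A, q \in C^\infty(\overline{Q})$ and $F \in \mathcal{H}_0^{2\kappa}(Q)$, the ODE \eqref{eq_thm21_g_km} for the Galerkin coefficients has smooth coefficients and source $F_k \in H^{2\kappa - 1}(0,T)$ with $\p_t^\ell F_k(0) = 0$ for $\ell \le 2\kappa - 1$. This yields approximants $v_m \in H^{2\kappa}(0,T; H^1_0(\Omega))$, and reading off the ODE at $t = 0$ recursively delivers the compatibility $\p_t^\ell v_m(0, \cdot) = 0$ for $\ell = 0, 1, \ldots, 2\kappa - 1$, which will be essential below.

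Next I would differentiate the Galerkin identity \eqref{eq_thm21_b_v_m} in $t$ up to $2\kappa$ times. Each derivative $\p_t^j v_m$ satisfies
\[
\LA i\p_t(\p_t^j v_m), e_k \RA_{L^2(\Omega)} - \mathfrak{a}(t; \p_t^j v_m, e_k) = \LA G_{j,m}, e_k \RA_{L^2(\Omega)}
\]
with
\[
G_{j,m} := \p_t^j F - \sum_{\ell = 1}^{j} \binom{j}{\ell}\bigl[\p_t^\ell(\Delta_A + q)\bigr] \p_t^{j-\ell} v_m,
\]
where for $\ell \geq 1$ the operator $\p_t^\ell(\Delta_A + q)$ is first order in space with $C^\infty$ coefficients (since $\p_t^\ell \Delta = 0$). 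I would then iterate the two energy identities of Theorem \ref{thm_linear_wellposedness}(b) on this new equation: testing against $\overline{\p_t^j v_m}$ and taking imaginary parts controls $\p_t^j v_m$ in $L^\infty_t L^2_x$ by $\|G_{j,m}\|_{L^2(Q)}$, while testing against $\overline{\p_t^{j+1} v_m}$, taking real parts, and integrating by parts in $t$ (where the compatibility at $t = 0$ kills the boundary term) controls $\p_t^j v_m$ in $L^\infty_t H^1_x$ by $\|G_{j,m}\|_{L^\infty_t L^2_x} + \|\p_t G_{j,m}\|_{L^2(Q)}$. A finite induction on $j$ from $0$ to $2\kappa$ then bounds every such quantity by $\|F\|_{H^{2\kappa}(Q)}$, since at stage $j$ the source $G_{j,m}$ depends only on $\p_t^\ell F$ with $\ell \le j$ and on $\p_t^\ell v_m$ with $\ell < j$ and at most one spatial derivative, which are controlled at earlier stages.

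To upgrade from $H^1_x$ to full spatial regularity I would rewrite the equation as
\[
\Delta_A(\p_t^j v_m) = -q\,\p_t^j v_m - i\,\p_t^{j+1} v_m + G_{j,m}, \qquad \p_t^j v_m|_{\p\Omega} = 0,
\]
and apply standard Dirichlet elliptic regularity iteratively on $j$ decreasing from $j = 2\kappa - 2$ to $j = 0$: each round upgrades $\p_t^j v_m$ by two spatial derivatives using the already-controlled time-derivative norm on $\p_t^{j+1} v_m$. This yields the uniform bounds $\|\p_t^j v_m\|_{L^2(0,T; H^{2\kappa - j}(\Omega))} \lesssim \|F\|_{H^{2\kappa}(Q)}$ for $j = 0, \ldots, 2\kappa$. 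Passing to the limit via Banach--Alaoglu produces a solution $v \in \mathcal{H}_0^{2\kappa}(Q) \sim H^{2\kappa, 2\kappa}(Q)$ satisfying \eqref{eq_linear_est_higher}; uniqueness is inherited from \eqref{eq_linear_est_weak}.

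The main obstacle is the tightly coupled bootstrap between temporal and spatial regularity: each time derivative of the equation produces first-order spatial derivatives of lower-order time derivatives of $v_m$ through the commutator $[\p_t, \Delta_A + q]$, so the induction on $j$ must carry genuine $H^1_x$ (not merely $L^2_x$) bounds on every prior $\p_t^\ell v_m$, and these bounds must close self-consistently. The hypothesis $A = q = 0$ on $(0,T) \times \mathcal{O}$ is a welcome convenience here: all mixed $(t,x)$-derivatives of $A$ and $q$ vanish near $\p\Omega$, so the iteratively generated boundary compatibility conditions reduce to those of the free Schr\"odinger equation, and Dirichlet elliptic regularity for $\Delta$ applies cleanly.
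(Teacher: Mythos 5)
Your overall architecture --- differentiate the equation repeatedly in $t$, run energy estimates at each level, then recover spatial regularity by elliptic theory on time slices --- is the same as the paper's, but two steps fail as written. First, you apply Dirichlet elliptic regularity to the identity $\Delta_A(\p_t^j v_m) = -q\,\p_t^j v_m - i\p_t^{j+1}v_m + G_{j,m}$ for the Galerkin approximants $v_m=\sum_{k\le m}g_{k,m}e_k$. But $v_m$ does not satisfy this equation pointwise: the Galerkin identity only asserts that the residual $i\p_t v_m+(\Delta_A+q)v_m-F$ is orthogonal to $\mathrm{span}\{e_1,\dots,e_m\}$, and that residual is not controlled in $L^2$. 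The spatial bootstrap must be performed on the limit solution $v$, which does satisfy the PDE a.e.; this is precisely the order of operations in the paper (energy estimates and Banach--Alaoglu first, then elliptic regularity for $v$, organized as an induction on $\kappa$ via the explicit expansion of $\Delta^\kappa v$, together with the boundary identities $\Delta^{\ell+1}v|_\Sigma=\Delta^\ell F|_\Sigma$). Relatedly, your single downward pass in $j$ does not obviously close: upgrading $\p_t^j v$ to $H^{2\kappa-j}$ in space requires the right-hand side in $H^{2\kappa-j-2}$, which involves spatial derivatives of the \emph{lower} time derivatives $\p_t^{j-\ell}v$ that have not yet been upgraded at that stage of the pass.

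Second, there is a circularity in your induction on $j$. The $L^\infty_t H^1_x$ estimate at stage $j$ invokes $\|\p_t G_{j,m}\|_{L^2(Q)}$, and $\p_t G_{j,m}$ contains the term $[\p_t(\Delta_A+q)]\p_t^{j}v_m$, i.e.\ $\p_tA\cdot\nabla\p_t^{j}v_m$ --- a spatial gradient of the very quantity being estimated at stage $j$, not of something ``controlled at earlier stages'' as you claim. This can be repaired, either by keeping that contribution under the time integral and absorbing it with Gronwall, or --- as the paper does --- by first applying the $H^1(0,T;H^{-1}(\Omega))$ well-posedness result (Theorem \ref{thm_linear_wellposedness}(a)) to the differentiated equation, where $\|\p_tA\cdot\nabla\p_t^{j}v\|_{H^{-1}(\Omega)}\lesssim\|\p_t^{j}v\|_{L^2(\Omega)}$ is genuinely controlled by the previous stage, and only afterwards upgrading with part (b). As stated, however, your induction does not close. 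A minor further point: the second energy estimate at the top level $j=2\kappa$ would require $\p_t^{2\kappa+1}F$, which is unavailable for $F\in\mathcal{H}_0^{2\kappa}(Q)$; that level only admits the $L^2_x$ estimate, which is all that is needed.
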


Later we use the notation $\mathcal{S}^{L}: \mathcal{H}^{2\kappa}_0(Q) \to \mathcal{H}^{2\kappa}_0(Q)$ to denote the solution operator of the problem \eqref{forward:eq_linear_IBVP} and write $v = \mathcal{S}^L(F)$.

\begin{proof}
We will use the induction argument to prove \eqref{eq_linear_est_higher}.
Let's start by showing the cases of $\kappa= 1$.

\textbf{Step 1.}
We first note that $\p_t^\ell v= 0$ on $\Sigma$ for $0\leq \ell \le 2\kappa$ due to $v = 0$ on $\Sigma$.
Using the equation $i\p_t v = F - \Delta_Av-qv$ in \eqref{forward:eq_linear_IBVP}, initial data $v(0,\cdot) = 0$ on $\Omega$, and the hypotheses $\p_t^\ell F(0,\cdot)= 0$ on $\Omega$, $0\leq \ell\le 2\kappa-1$, it follows that
\[
\p_t^\ell v(0,\cdot) = 0 \hbox{ on } \Omega, \quad 0\leq \ell \le 2\kappa.
\]

For $\kappa=1$, since $F\in \mathcal{H}_0^{2}(Q)\subset H^{1,0}(Q)$, Theorem~\ref{thm_linear_wellposedness} (b) ensures the unique existence of the solution $v\in H^{1,2}(Q)$ of \eqref{forward:eq_linear_IBVP} and 
\begin{equation}\label{eq_induction2_4}
\|v\|_{H^{1,2}(Q)}\lesssim \|F\|_{H^{1,0}(Q)}.
\end{equation}

To show $\|v\|_{H^{2}(Q)} \lesssim \|F\|_{H^{2}(Q)}$, we apply $\p_t$ to $L_{A,q}v =F$ and derive
\begin{align}\label{eq_induction2_2_new}
L_{A,q}(\p_t v)=i\p_t^{2} v +\Delta_A \p_t v + q\p_t v  = F_1(\p_tF,\p_t A, \p_t\p_x A,v, \p_x v, \p_tq).
\end{align}
We also have $\p_t v = 0$ on $\Sigma$ and $\p_tv(0,\cdot)=0$ on $\Omega$.
Here $F_1$ contains the linear combinations of the terms of the form $\p_tF$, $\p_tA\cdot\nabla v$, $(\p_t\nabla\cdot A)v$, and $(\p_tq)v$. In fact, $F_1\in H^{1}(0,T;H^{-1}(\Omega))$ and $F_1(0,\cdot)=0$ on $\Omega$. Using Theorem~\ref{thm_linear_wellposedness} (a), we derive  $\p_t v\in L^2(0,T;H^1(\Omega))$ is a solution of \eqref{eq_induction2_2_new} and it satisfies
\begin{align}\label{eq_induction2_3}
\|\p_t v\|_{L^2(0,T;H^1(\Omega))}\lesssim \|F_1\|_{H^{1}(0,T;H^{-1}(\Omega))}
\lesssim \|F\|_{H^{2}(Q)},
\end{align}
which implies $\p_t\p_x v\in L^2(Q)$. From the definition of $F_1$, we now get $F_1\in H^{1,0}(Q)$. With this, we are able to raise the regularity of the solution. Applying Theorem~\ref{thm_linear_wellposedness} (b) to \eqref{eq_induction2_2_new}, we deduce $\p_t v\in H^{1,2}(Q)$ so that 
\begin{align}\label{eq_induction2_1}
\|\p_t v\|_{H^{1,2}(Q)}\lesssim  \|F\|_{H^{2}(Q)},
\end{align}
which results in the desired estimate $\|v\|_{H^{2}(Q)} \lesssim \|F\|_{H^{2}(Q)}$.

Now by the induction argument, we assume that  
\begin{equation}\label{induction k-1}
\|v\|_{H^{2(\kappa-1)}(Q)} \lesssim \|F\|_{H^{2(\kappa-1)}(Q)}.
\end{equation}
Showing \eqref{eq_linear_est_higher} is equivalent to justifying
\[
\|v\|_{H^{2\kappa}(0,T;L^2(\Omega))} +\|v\|_{L^2(0,T;H^{2\kappa}(\Omega))} \lesssim \|F\|_{H^{2\kappa}(Q)},
\]
see \cite[Proposition 2.3, Chapter 4]{lions_non-homogeneous_1972} for identifying the Sobolev space $H^{2\kappa}(Q)$ with $H^{2\kappa,2\kappa}(Q)$.
To establish $\|v\|_{H^{2\kappa}(0,T;L^2(\Omega))}\lesssim \|F\|_{H^{2\kappa}(Q)}$, by our assumption \eqref{induction k-1}, it suffices to show
\begin{align}\label{EST:induction goal_1}
\quad \|\p_t^{2\kappa-1} v\|_{L^2(Q)}+\|\p_t^{2\kappa} v\|_{L^2(Q)}\lesssim  \|F\|_{H^{2\kappa}(Q)}.   
\end{align}
Note that, for $1\le\ell\le{\kappa-1}$, $\Delta^{\ell+1}v|_\Sigma=\Delta^{\ell}F|_{\Sigma}\in H^{2(\kappa-\ell)-\frac{1}{2}}(\Sigma)$, due to 
$A=q=0$ on $(0,T)\times\mathcal{O}$ 
and $v|_{\Sigma}=0$. By elliptic regularity
and the assumption \eqref{induction k-1}, it remains to show
\begin{align}\label{EST:induction goal_2}
\|\Delta^\kappa u\|_{L^2(Q)}\lesssim  \|F\|_{H^{2\kappa}(Q)},
\end{align}
which would imply $\|v\|_{L^2(0,T;H^{2\kappa}(\Omega))} \lesssim \|F\|_{H^{2\kappa}(Q)}$.

\textbf{Step 2.}
To this end, we will claim that 
\begin{align}\label{induction increase t}
\|\p_t^\ell v\|_{H^{1,2}(Q)}\lesssim \|F\|_{H^{\ell+1}(Q)}, \quad 0\leq \ell \leq 2\kappa-1,
\end{align}
which yields \eqref{EST:induction goal_1}.
For this, we apply the induction argument again.
The $\ell=0$ case has been settled by \eqref{eq_induction2_4}, and $\ell = 1$ case has been established by \eqref{eq_induction2_1}.
Assuming that \eqref{induction increase t} holds for $0,1,\cdots,\ell -1$, we now prove it holds for $\ell$.
Applying $\p_t^{\ell}$ to $L_{A,q}v =F$, we get 
\begin{align}\label{eq_induction2_2_new2}
(i\p_t^{\ell+1} +\Delta_A \p_t^{\ell} + q\p_t^{\ell})v  = F_{\ell},
\end{align}
where $F_{\ell}$ contains the linear combinations of the terms of the form $\p_t^{\ell} F$, $(\p_t^{\ell-j}A)\cdot(\nabla\p_t^jv)$, $(\p_t^{\ell-j}(\nabla\cdot A))(\p_t^jv)$, $(\p_t^{\ell-j}q)(\p_t^jv)$
for $j = 0,1,\cdots, \ell-1$. From \eqref{induction increase t}, $F_{\ell}\in H^{1}(0,T;H^{-1}(\Omega))$ and, in particular, it satisfies
\begin{equation*}
\begin{aligned}
&\|F_{\ell}\|_{H^{1}(0,T;H^{-1}(\Omega))} 
\\
\lesssim \,
&\|\p_t^{\ell}F\|_{H^1(0,T;H^{-1}(\Omega))} + \sum_{j=0}^{\ell}\|\nabla\p_t^j v\|_{L^2(0,T;H^{-1}(\Omega))}
+ \sum_{j=0}^{\ell}\|\p_t^j v\|_{L^2(0,T;H^{-1}(\Omega))}
\\
\lesssim \,&\|F\|_{H^{\ell+1}(Q)} + \sum_{j=0}^{\ell}\|\p_t^jv\|_{L^2(Q)}
\lesssim \|F\|_{H^{\ell+1}(Q)}.
\end{aligned}
\end{equation*}
Together with $F_{\ell}(0,\cdot)=0$ on $\Omega$, $\p_t^{\ell}v = 0$ on $\Sigma$ and $\p_t^{\ell}v(0,\cdot) = 0$ on $\Omega$, we apply Theorem~\ref{thm_linear_wellposedness} (a) to \eqref{eq_induction2_2_new2} so that the solution $\p_t^{\ell}v\in L^2(0,T;H^1(\Omega))$ satisfies
\begin{align}\label{eq_induction2_3 new}
\|\p_t^{\ell}v\|_{L^2(0,T;H^1(\Omega))}\lesssim \|F_{\ell}\|_{H^1(0,T;H^{-1}(\Omega))}
\lesssim \|F\|_{H^{\ell+1}(Q)},
\end{align}
which implies $F_{\ell}\in H^{1,0}(Q)$. 
We then apply Theorem~\ref{thm_linear_wellposedness} (b) to deduce that $\p_t^{\ell}v\in H^{1,2}(Q)$ and satisfies
\begin{align*}
\|\p_t^{\ell}v\|_{H^{1,2}(Q)}\lesssim \|F_{\ell}\|_{H^{1,0}(Q)}
&\lesssim \|\p_t^{\ell}F\|_{H^{1,0}(Q)} + \sum_{j=0}^{\ell}\|\p_t^j\nabla v\|_{L^2(Q)} + \sum_{j=0}^{\ell}\|\p_t^j v\|_{L^2(Q)}
\\
&
\lesssim \|F\|_{H^{\ell+1}(Q)} + \sum_{j=0}^{\ell}\|\p_t^jv\|_{L^2(0,T;H^1(\Omega))}
\\
&\lesssim \|F\|_{H^{\ell+1}(Q)} + \sum_{j=0}^{\ell-1}\|\p_t^jv\|_{H^{1,2}(Q)} + \|\p_t^{\ell}v\|_{L^2(0,T;H^1(\Omega))} 
\\
&\lesssim \|F\|_{H^{\ell+1}(Q)},
\end{align*}
where we utilized our induction argument and \eqref{eq_induction2_3 new}. This finishes the proof of the claim.

\textbf{Step 3.} 
It remains to show \eqref{EST:induction goal_2}.
The case $\kappa = 1$ is settled by \eqref{eq_linear_IBVP_est}. For $\kappa\ge 2$,
using $L_{A,q}v= F $, we write 
\[
\Delta^{\kappa} v = \Delta^{\kappa-1}(F-i\p_tv - 2iA\cdot\nabla v - i(\nabla \cdot A)v + |A|^2v-qv)
=: \RNum{1}+\RNum{2}+\RNum{3},
\]
where
\begin{align*}
\RNum{1}: &= \Delta^{\kappa-1}\LC F -i(\nabla\cdot A) v+|A|^2v -qv\RC, 
\\
\RNum{2}: &= -\Delta^{\kappa-2}(\Delta i\p_t v)  =  -i\p_t\Delta^{\kappa-2}\LC F - i\p_tv-2iA\cdot\nabla v -i(\nabla\cdot A) v +|A|^2v -qv\RC,
\\
\RNum{3}: &= - \Delta^{\kappa-2}\Delta(2iA\cdot\nabla v) \\
& = - 2i \Delta^{\kappa-2}\Biggl( \Delta A\cdot \nabla v+ 2\sum_{j,k = 1}^n(\p_{x_j}A_k)(\p_{x_j}\p_{x_k}v)\\
&\quad + A\cdot\nabla\LC F- i\p_tv-2iA\cdot\nabla v -i(\nabla\cdot A) v +|A|^2v -qv\RC \Biggl).
\end{align*}
Noting that the highest derivative term in $\RNum{1}$, $\RNum{2}$, and $\RNum{3}$ is of order $2\kappa-2$, by our induction assumption \eqref{induction k-1}, we have that 
\[
\|\Delta^\kappa v\|_{L^2(Q)}\lesssim \|\RNum{1}\|_{L^2(Q)}+\|\RNum{2}\|_{L^2(Q)}+\|\RNum{3}\|_{L^2(Q)}
\lesssim \|F\|_{H^{2\kappa-2}(Q)}+ \|v\|_{H^{2\kappa-2}(Q)}\lesssim \|F\|_{H^{2\kappa}(Q)},
\]
which completes the proof.
\end{proof}
\begin{remark}\label{remark_adjoint_source}
Suppose that the assumptions of Proposition~\ref{prop_linear_est_higher} hold by considering $F\in \mathcal{H}_T^{2\kappa}(Q)$ instead.
By replacing $t$ by $T-t$, we see that the adjoint problem 
\begin{align}\label{eq_adjoint_IBVP_source}
\left\{\begin{array}{rrll}
(i\p_t+\Delta_A+q) v  &=& F  &\hbox{ in } Q,\\
v &=& 0  &\hbox{ on } \Sigma,\\
v &=& 0  &\hbox{ on } \{t=T\}\times\Omega ,
\end{array}  \right.   
\end{align}
has a unique solution $v\in \mathcal{H}_T^{2\kappa}(Q)$ satisfying \eqref{eq_linear_est_higher}.
\end{remark}

Now we give the existence result for the linear MSE with nontrivial boundary value. 
\begin{prop}[Well-posedness for the linear MSE]\label{prop_linear_boundary}
Let $\kappa\in\mathbb{N}$ 
and 
$f\in \widetilde {\mathcal{H}}_0^{2\kappa+\frac{3}{2}}(\Sigma)$.
Suppose that 
$A \in  C^\infty(\overline{Q};\R^n)$, and $q\in C^\infty(\overline{Q};\R)$ satisfy 
$A=q=0$ on $(0,T)\times\mathcal{O}$.
Then the following IBVP 
\begin{align}\label{eq_linear_IBVP_boundary}
\left\{\begin{array}{rrll}
(i\p_t+\Delta_A+q)v  &=& 0  &\hbox{ in } Q,\\
v &=& f  &\hbox{ on } \Sigma,\\
v &=& 0  &\hbox{ on } \{t=0\}\times\Omega ,
\end{array}  \right.   
\end{align}
has a unique solution $v\in \mathcal{H}_0^{2\kappa}(Q)$ satisfying 
\begin{align}\label{eq_linear_IBVP_bounary_est}
\|v\|_{H^{2\kappa}(Q)}\lesssim \|f\|_{ H^{2\kappa+\frac{3}{2}}(\Sigma)}.
\end{align}
\end{prop}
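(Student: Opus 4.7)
The plan is to reduce the problem with inhomogeneous boundary data to the zero boundary data case, so that Proposition~\ref{prop_linear_est_higher} can be applied. The key ingredient is a lifting of the boundary data $f$ to a function $G$ defined on $Q$ that preserves the vanishing conditions at $t=0$.

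\textbf{Step 1 (lift the boundary data).} Using the standard trace/extension theorem for anisotropic Sobolev spaces (see e.g.\ Lions--Magenes \cite{lions_non-homogeneous_1972}, Chap.~4), the trace map
\[
\mathrm{tr}:\mathcal{H}_0^{2\kappa+2}(Q)\to \widetilde{\mathcal{H}}_0^{2\kappa+\frac{3}{2}}(\Sigma)
\]
admits a continuous right inverse. Hence there exists $G\in \mathcal{H}_0^{2\kappa+2}(Q)$ with $G|_\Sigma=f$ and
\[
\|G\|_{H^{2\kappa+2}(Q)}\lesssim \|f\|_{H^{2\kappa+\frac{3}{2}}(\Sigma)}.
\]
By definition of $\mathcal{H}_0^{2\kappa+2}(Q)$, we have $\p_t^\ell G(0,\cdot)=0$ in $\Omega$ for all $\ell\le 2\kappa+1$.

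\textbf{Step 2 (reduce to zero boundary data).} Define
\[
F:=-(i\p_t+\Delta_A+q)G\in H^{2\kappa}(Q),
\]
where the regularity follows because $i\p_t G\in H^{2\kappa+1}(Q)$, $\Delta_AG\in H^{2\kappa}(Q)$, and $qG\in H^{2\kappa+2}(Q)$ given the smoothness of $A$ and $q$. I then verify $F\in \mathcal{H}_0^{2\kappa}(Q)$: for each $\ell\le 2\kappa-1$, the trace $\p_t^\ell F(0,\cdot)$ is a linear combination of terms $\p_t^{\ell+1}G(0,\cdot)$, $\p_t^{\ell-j}A\,\p_t^j\nabla G(0,\cdot)$, etc., all of which vanish since $\p_t^m G(0,\cdot)=0$ for $m\le 2\kappa+1$.

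\textbf{Step 3 (apply Proposition~\ref{prop_linear_est_higher}).} Since $F\in\mathcal{H}_0^{2\kappa}(Q)$, Proposition~\ref{prop_linear_est_higher} yields a unique $w\in\mathcal{H}_0^{2\kappa}(Q)$ solving
\[
(i\p_t+\Delta_A+q)w=F\text{ in }Q,\qquad w|_\Sigma=0,\qquad w|_{t=0}=0,
\]
with
\[
\|w\|_{H^{2\kappa}(Q)}\lesssim \|F\|_{H^{2\kappa}(Q)}\lesssim \|G\|_{H^{2\kappa+2}(Q)}\lesssim \|f\|_{H^{2\kappa+\frac{3}{2}}(\Sigma)}.
\]
Setting $v:=w+G$, one checks $(i\p_t+\Delta_A+q)v=F-F=0$, $v|_\Sigma=0+f=f$, and $v|_{t=0}=0$; moreover $v\in\mathcal{H}_0^{2\kappa}(Q)$ and \eqref{eq_linear_IBVP_bounary_est} holds by the triangle inequality.

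\textbf{Step 4 (uniqueness).} If $v_1,v_2$ are two solutions, then $v_1-v_2\in\mathcal{H}_0^{2\kappa}(Q)$ solves the homogeneous equation with zero boundary and initial data. The $L^2$-stability estimate from Remark~\ref{remark_L2_est} applied to the difference gives $v_1=v_2$, completing the proof.

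The main technical point is Step~1: one must cite (or verify) the surjectivity of the trace map \emph{together with} the preservation of the vanishing conditions at $t=0$, so that the lift $G$ lies in $\mathcal{H}_0^{2\kappa+2}(Q)$ with enough time-derivatives vanishing at $t=0$ to guarantee $F\in\mathcal{H}_0^{2\kappa}(Q)$. Once this lifting is in place, everything reduces to the already-established interior well-posedness result.
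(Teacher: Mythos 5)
Your proposal is correct and follows essentially the same route as the paper: lift $f$ to $U\in\mathcal{H}_0^{2\kappa+2}(Q)$ via the Lions--Magenes trace theorem, subtract it to reduce to a source problem with $\widetilde F=-L_{A,q}U\in\mathcal{H}_0^{2\kappa}(Q)$, and apply Proposition~\ref{prop_linear_est_higher}. Your explicit verification of the vanishing time-traces of $F$ at $t=0$ and the uniqueness step are details the paper leaves implicit, but the argument is the same.
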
 
\begin{proof}
In light of \cite[Theorem 2.3, Chapter 4]{lions_non-homogeneous_1972}, there exists $U\in \mathcal{H}_0^{2\kappa+ 2}(Q)$  and $U|_{\Sigma} = f$ with 
\begin{equation}\label{eq_proof_linear_well_1}
\|U\|_{H^{2\kappa+ 2}(Q)}\lesssim \|f\|_{H^{2\kappa+\frac{3}{2}}(\Sigma)}.
\end{equation}

To show the existence of $v$ to \eqref{eq_linear_IBVP_boundary}, it suffices to find $\widetilde v:= v-U\in \mathcal{H}_0^{2\kappa}(Q)$ such that 
\[
L_{A,q} \widetilde v  =\widetilde F\]
where 
\[
\widetilde F: = 
-L_{A,q} U  \in \mathcal{H}_0^{2\kappa}(Q).
\]
By Proposition \ref{prop_linear_est_higher},
there exists $\widetilde v\in \mathcal{H}^{2\kappa}_0(Q)$ and the following estimate holds:
\begin{align}\label{eq_proof_linear_well_2}
\|\widetilde v\|_{H^{2\kappa}(Q)}\le \|\widetilde F\|_{H^{2\kappa}(Q)}.
\end{align}
Inequality \eqref{eq_linear_IBVP_bounary_est} is implied by \eqref{eq_proof_linear_well_1} and \eqref{eq_proof_linear_well_2}.
\end{proof}

\begin{remark}\label{remark_adjoint_boundary} We also have the corresponding result for the adjoint problem.
Let $\kappa\in \mathbb{N}$ and $f\in \widetilde {\mathcal{H}}_T^{2\kappa+\frac{3}{2}}(\Sigma)$.
Suppose that 
$A \in  C^\infty(\overline{Q};\R^n)$, and $q\in C^\infty(\overline{Q};\R)$ satisfy 
$A=q=0$ on $(0,T)\times\mathcal{O}$.
Then the adjoint problem 
\begin{align}\label{eq_adjoint_IBVP_boundary}
\left\{\begin{array}{rrll}
(i\p_t+\Delta_A+q)v  &=& 0  &\hbox{ in } Q,\\
v &=& f  &\hbox{ on } \Sigma,\\
v &=& 0  &\hbox{ on } \{t=T\}\times\Omega ,
\end{array}  \right.   
\end{align}
has a unique solution $v\in \mathcal{H}_T^{2\kappa}(Q)$ satisfying \eqref{eq_linear_IBVP_bounary_est}.
\end{remark}

\subsection{Well-posedness for the nonlinear MSE}

The goal of this subsection is to show the unique existence of solutions for the initial boundary value problem for the nonlinear MSE.

\begin{prop}[Well-posedness for the nonlinear MSE]\label{prop_nonlinear_wellposed}
Let $\kappa > {n+1\over 2}$ be an integer. 
Suppose that $B_{\sigma\beta}\in C^\infty(\overline{Q};\R)$,
$A \in  C^\infty(\overline{Q};\R^n)$, $q\in C^\infty(\overline{Q};\R)$ such that 
$A=q=0$ on $(0,T)\times\mathcal{O}$.
There exists $\delta>0$ small enough such that for
$f\in \mathcal{D}_\delta(\Sigma)$, the problem \eqref{eq_IBVP} has a unique solution $u\in {\mathcal{H}}^{2\kappa}_0(Q)$ satisfying 
\begin{align}\label{eq_nonlinear_est}
\|u\|_{H^{2\kappa}(Q)}\lesssim \|f\|_{H^{2\kappa+\frac{3}{2}}(\Sigma)}.
\end{align}
\end{prop}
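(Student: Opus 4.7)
The plan is to reduce to a fixed-point problem for the \emph{linear} MSE via a boundary lift, and then apply the Banach fixed-point theorem on a small ball in $\mathcal{H}_0^{2\kappa}(Q)$. The crucial analytic fact underlying the nonlinear estimates is that the hypothesis $\kappa > (n+1)/2$ forces $2\kappa > \dim Q = n+1$, so that $H^{2\kappa}(Q) \hookrightarrow C(\overline{Q})$ and $H^{2\kappa}(Q)$ is a Banach algebra. This embedding/algebra property is precisely what will let us control the polynomial nonlinearity $N$ of degree $\le M$ in $H^{2\kappa}$-norm.

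First, I would invoke Proposition \ref{prop_linear_boundary} to produce $u_0 \in \mathcal{H}_0^{2\kappa}(Q)$ solving $L_{A,q}u_0 = 0$ with $u_0|_{\Sigma} = f$ and $u_0|_{t=0} = 0$, together with the bound $\|u_0\|_{H^{2\kappa}(Q)} \lesssim \|f\|_{H^{2\kappa+3/2}(\Sigma)} \le C\delta$. Setting $u = u_0 + w$, the nonlinear problem becomes the search for $w \in \mathcal{H}_0^{2\kappa}(Q)$ with zero boundary and initial data satisfying
\[
L_{A,q} w = N\bigl(t,x,u_0+w,\overline{u_0+w}\bigr).
\]
Using the solution operator $\mathcal{S}^L$ of Proposition \ref{prop_linear_est_higher}, I define
\[
\Phi(w) := \mathcal{S}^L\bigl( N(t,x, u_0+w, \overline{u_0+w}) \bigr),
\]
so that a fixed point of $\Phi$ in $\mathcal{H}_0^{2\kappa}(Q)$ produces the sought-after $u$.

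Next, I would show that for $\rho := C'\delta$ with an appropriate $C'>0$ and $\delta$ small, $\Phi$ is a self-map and a contraction on the closed ball $\mathcal{B}_\rho := \{w \in \mathcal{H}_0^{2\kappa}(Q) : \|w\|_{H^{2\kappa}(Q)} \le \rho\}$. The linear estimate \eqref{eq_linear_est_higher} combined with the Banach-algebra property yields, for $w,\tilde w \in \mathcal{B}_\rho$,
\[
\|\Phi(w)\|_{H^{2\kappa}(Q)} \lesssim \sum_{2\le \sigma+\beta\le M}\|B_{\sigma\beta}\|_{C^{2\kappa}(\overline{Q})}(\delta+\rho)^{\sigma+\beta},
\]
together with the tame difference bound
\[
\|\Phi(w)-\Phi(\tilde w)\|_{H^{2\kappa}(Q)} \lesssim \sum_{2\le\sigma+\beta\le M}\|B_{\sigma\beta}\|_{C^{2\kappa}(\overline{Q})}(\delta+\rho)^{\sigma+\beta-1}\|w-\tilde w\|_{H^{2\kappa}(Q)},
\]
which follows from the identity $v^\sigma\overline v^\beta-\tilde v^\sigma\overline{\tilde v}^\beta = \sum_{j}(\cdot)(v-\tilde v) + (\cdot)(\overline{v-\tilde v})$ and the algebra estimate. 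Taking $\rho = C'\delta$ and shrinking $\delta$ makes the right-hand side of the first display $\le \rho$ and the prefactor in the second display strictly less than $1$. Banach's fixed-point theorem then furnishes a unique $w \in \mathcal{B}_\rho$, hence the unique solution $u = u_0 + w$ satisfying \eqref{eq_nonlinear_est}; uniqueness in all of $\mathcal{H}_0^{2\kappa}(Q)$ (not only inside the ball) follows from a standard Gronwall-type energy argument applied to the difference of two solutions, using Remark \ref{remark_L2_est}.

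The main technical obstacle I anticipate is verifying the compatibility conditions at $t=0$ required to feed $N(u_0+w,\overline{u_0+w})$ into $\mathcal{S}^L$, i.e., that this source genuinely lies in $\mathcal{H}_0^{2\kappa}(Q)$. Since $u_0,w \in \mathcal{H}_0^{2\kappa}(Q)$, we have $\partial_t^\ell(u_0+w)(0,\cdot) = 0$ for $\ell \le 2\kappa-1$. By the Leibniz (or Fa\`a di Bruno) rule, every time derivative of order $\le 2\kappa-1$ of the monomial $(u_0+w)^\sigma\overline{(u_0+w)}^\beta$ at $t=0$ must distribute at least one derivative on some factor of degree $\le 2\kappa-1$, and since $\sigma+\beta \ge 2$ at least one undifferentiated factor remains present; smoothness of $B_{\sigma\beta}$ then transfers the vanishing to $N$. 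Once the compatibility and the tame algebra estimates are in place, the contraction argument is structural and absorbs all admissible powers $2\le \sigma+\beta\le M$ uniformly.
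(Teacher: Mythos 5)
Your proposal is correct and follows the same skeleton as the paper's proof: lift the boundary data by Proposition \ref{prop_linear_boundary} to get $u_0=v_f$, reduce to a zero-data problem for the correction $w$, and close the argument with a fixed point in $\mathcal{H}_0^{2\kappa}(Q)$ using the solution operator $\mathcal{S}^L$ of Proposition \ref{prop_linear_est_higher} and the Banach-algebra property of $H^{2\kappa}(Q)$ for $\kappa>\frac{n+1}{2}$. The only real divergence is the fixed-point engine: you run an explicit contraction on the ball $\mathcal{B}_{C'\delta}$, whereas the paper applies the implicit function theorem to $\mathcal{G}(w,f)=w-\mathcal{S}^L F(w,f)$ with $\p_w\mathcal{G}(0,0)=Id$. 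Your route is more self-contained and has the advantage of making the self-map/contraction estimates and the compatibility of the source at $t=0$ explicit (the paper leaves the latter implicit in the claim $F(w,f)\in\mathcal{H}_0^{2\kappa}(Q)$), and your Gronwall argument even gives uniqueness in all of $\mathcal{H}_0^{2\kappa}(Q)$ rather than just in a small ball. The trade-off is that the implicit function theorem delivers, at no extra cost, \emph{smoothness} of the solution map $f\mapsto\psi(f)$, which the paper relies on immediately afterwards to justify the first- and higher-order linearizations $\p_{\varepsilon_1}\cdots\p_{\varepsilon_k}u_{j,\varepsilon}|_{\varepsilon=0}$ in Sections \ref{sec:linear}--\ref{sec:nonlinear}; with the contraction approach you would need a supplementary argument (e.g., differentiating the fixed-point equation, or invoking the IFT a posteriori) to recover that regularity of the data-to-solution map. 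For the proposition as stated, however, your argument is complete.
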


\begin{proof} 
For $f\in \widetilde {\mathcal{H}}_0^{2\kappa+\frac{3}{2}}(\Sigma)$, by Proposition~\ref{prop_linear_boundary}, there exists a unique solution $v_f\in {\mathcal{H}}_0^{2\kappa}(Q)$ of \eqref{eq_linear_IBVP_boundary} satisfying 
\begin{align}\label{eq_proof_v_f}
\|v_f\|_{H^{2\kappa}(Q)}\le C_1 \|f\|_{H^{2\kappa+\frac{3}{2}}(\Sigma)},
\end{align}
for some constant $C_1$. It is sufficient to show the existence of the solution {$w\in {\mathcal{H}}_0^{2\kappa}(Q)$} to the following problem:	
\begin{align}\label{eq_adjoint_IBVP_boundary w}
\left\{\begin{array}{rrll}
(i\p_t+\Delta_A+q)w  &=& N(t,x,v_f+w,\overline{v_f+w})  &\hbox{ in } Q,\\
w &=& 0 &\hbox{ on } \Sigma,\\
w &=& 0  &\hbox{ on } \{t=0\}\times \Omega.
\end{array}  \right.   
\end{align}

To this end, we will apply the implicit function theorem. Recall that $\mathcal{S}^L:{\mathcal{H}}_0^{2\kappa}(Q)\rightarrow {\mathcal{H}}_0^{2\kappa}(Q)$ is the solution operator of \eqref{forward:eq_linear_IBVP}. 
We define two maps $F,\, \mathcal{G}: {\mathcal{H}}_0^{2\kappa}(Q)\times \widetilde {\mathcal{H}}_0^{2\kappa+\frac{3}{2}}(\Sigma) \rightarrow {\mathcal{H}}_0^{2\kappa}(Q)$ by
$$F(w ,f) := N(t,x,v_f+w,\overline{v_f+w}) \quad\hbox{ and }\quad \mathcal{G} (w, f) := w - \mathcal{S}^LF(w, f).$$
Here we used the fact that for $\kappa>{n+1\over 2}$, the space ${\mathcal{H}}_0^{2\kappa}(Q)$ is a Banach algebra so that $F(w, f)\in {\mathcal{H}}_0^{2\kappa}(Q)$ for each $(w, f)\in {\mathcal{H}}_0^{2\kappa}(Q)\times \widetilde {\mathcal{H}}_0^{2\kappa+\frac{3}{2}}(\Sigma)$.
Applying Proposition~\ref{prop_linear_est_higher} then yields that  there exists a unique solution $\mathcal{S}^L F(w,f) \in {\mathcal{H}}_0^{2\kappa}(Q)$ to the problem \eqref{forward:eq_linear_IBVP}.
Also, due to the well-posedness theorem for the linear MSE, we have $v_f\equiv 0$ if $f=0$ and thus 
$$
\mathcal{G}(0,0)= 0 - \mathcal{S}^LF(0,0)=0.
$$

Note that the map $\mathcal{S}^L$ is linear and $F(w,f)$ is a polynomial with respect to $w$, by using the chain of rule, it follows that $\mathcal{G}$ is smooth in $w$. 
Analogously, since the solution map, which maps from $f$ to $v_f$ is also linear, $\mathcal{G}$ is also smooth in $f$. In particular, 
we derive that 
$\p_w \mathcal{G}(0,0): {\mathcal{H}}_0^{2\kappa}(Q)\rightarrow {\mathcal{H}}_0^{2\kappa}(Q)$ and $\p_w \mathcal{G}(0,0) = Id$, the identity map.
Therefore, by the implicit function theorem
{\cite[Theorem 10.6]{renardy_introduction_2004}} (\cite[Appendix B]{poschel_inverse_1987}), there exists   
$\delta>0$ and a smooth map $\psi$ from $\mathcal{D}_\delta(\Sigma)\rightarrow {\mathcal{H}}_0^{2\kappa}(Q)$ such that for all $f \in \mathcal{D}_\delta(\Sigma)$, we have 
$$
\mathcal{G}(\psi(f),f) = 0,
$$ 
which implies $w:=\psi(f)$ is the solution of \eqref{eq_adjoint_IBVP_boundary w}.
Since $\mathcal{\psi}(0)=0$ and $\mathcal{\psi}$ is Lipschitz continuous, the solution $w=\psi(f)$ satisfies
\begin{align}\label{eq_proof_w_f}
\|w\|_{H^{2\kappa}(Q)}\le C_1 \|f\|_{H^{2\kappa+\frac{3}{2}}(\Sigma)}.
\end{align}
With this, we can deduce that $u:=v_f+w$ is the solution of \eqref{eq_IBVP} and satisfies
\begin{align}\label{eq_proof_u}
\|u\|_{H^{2\kappa}(Q)}\leq \|v_f\|_{H^{2\kappa}(Q)}+\|w\|_{H^{2\kappa}(Q)}\le C_1 \|f\|_{H^{2\kappa+\frac{3}{2}}(\Sigma)}\le C_1\delta,
\end{align}
for some constant $C_1$. 
\end{proof}	

From Proposition~\ref{prop_nonlinear_wellposed}, we have showed the unique existence of solution for \eqref{eq_IBVP} and derived that the solution map $\psi: \mathcal{D}_\delta(\Sigma)\rightarrow \mathcal{H}_0^{2\kappa}(Q)$ is smooth. It follows that $\Lambda^\sharp_{A,q,N}:\mathcal{D}_\delta(\Sigma)\rightarrow \mathcal{H}^{2\kappa-{3\over 2}}(\Sigma^\sharp)$ 
is well-defined and smooth.

\section{Geometric optics solutions}\label{section_geo_sol}
In this section, we will construct two types of geometric optics (GO) solutions to the {linear magnetic Schr\"odinger equation $L_{c,A,q}v=0$
} with \textit{non-localized amplitudes} and \textit{localized amplitudes}. In particular, the former GO solutions will be utilized to recover the linear coefficients in Section~\ref{sec:linear}, while the later one with amplitudes localized near a straight line will be used to recover the nonlinear term in Section~\ref{sec:nonlinear}.

We make the ansatz that the GO solutions to the equation $L_{c,A,q}v=0$ are of the form 
$$
v(t,x) = e^{i\Phi(t,x)} \underbrace{\LC\sum_{k=0}^N \rho^{-k} V_{k}(t,x) \RC }_{V(t,x;\rho)} + R_\rho(t,x),
$$
where we take the linear phase
$$
\Phi(t,x): = \rho \LC \frac{x\cdot\omega}{\sqrt{c(t)}}{-}\rho|\omega|^2 t\RC
$$
with parameter $\rho>0$ and vector $0\neq  \omega\in \R^n$. Let the operator $L_{c,A,q}$ act on $v$ and then we get
$$
0= L_{c,A,q} v = e^{i\Phi(t,x)}\LC L_{c,A,q} V+ {2i\sqrt{c(t)}}\rho \omega\cdot \nabla_A V  {+ \rho {c'(t)\over 2c(t)^{3/2}} (x\cdot\omega)}V\RC +L_{c,A,q}R_{\rho}.
$$
By comparing the order of $\rho$ on both sides, this implies that we are looking for the terms $V_k\equiv V_k(t,x)$ satisfying 
\begin{equation}\label{CON:a_k}\begin{split}
&{2i\sqrt{c(t)}}\omega\cdot\nabla_A V_{0}{+ {c'(t)(x\cdot\omega)\over 2c(t)^{3/2}} V_0} = 0,\\
&2i\sqrt{c(t)}\,\omega\cdot\nabla_A V_{1}{+ {c'(t)(x\cdot\omega)\over 2c(t)^{3/2}} V_1} =- L_{c,A,q} V_{0},\\
&\qquad\qquad\vdots\\
&2i\sqrt{c(t)}\,\omega\cdot\nabla_A V_{N}{+ {c'(t)(x\cdot\omega)\over 2c(t)^{3/2}} V_N} =- L_{c,A,q} V_{N-1},
\end{split}
\end{equation}
and the remainder term $R_\rho\equiv R_\rho(t,x)$ satisfying
\begin{align}\label{CON:r}
\left\{\begin{array}{rcll}
L_{c,A,q}R_{\rho} &=&- \rho^{-N}e^{i\Phi(t,x)}L_{c,A,q}V_{N}&\quad \hbox{in }Q,\\
R_{\rho}&=&0 &\quad \hbox{on }\Sigma,\\
R_{\rho}&=&0 &\quad \hbox{on }\{t=0\}\times \Omega.\\
\end{array} \right. 
\end{align}

\subsection{GO solutions with non-concentrated amplitudes}\label{sec:go nonconcentrated}
Here we take $\omega\in \mathbb{S}^{n-1}$, similar to the GO solutions constructed in \cite{KianSoccorsi} and \cite{Bella-Fraj2020}. Given {$c(t)\in C^\infty([0,T];\R)$,} $A \in  C^\infty(\overline{Q};\R^n)$ and $q\in C^\infty(\overline{Q};\R)$, 
we extend them to $(0,T)\times\R^n$ smoothly, still denoted by $A$ and $q$, respectively, such that $A(t,\cdot)$ and $q(t,\cdot)$ {vanish outside an open neighborhood of $\overline{\Omega}$. }

Let $h>0$ be a fixed small constant and $\zeta\in C^\infty_0(\R)$ be a smooth function supported in $(h, T-h)$ such that $0\le\zeta\le 1$ and $\zeta =1$ in $[2h,T-2h]$. Also, for each $k\in\mathbb{N}$, there exists a constant $C>0$ such that 
$$
\|\zeta\|_{C^k(\R)}\leq C h^{-k}.
$$

We first construct the leading term $V_0$ for the linear MSE with smooth coefficients $A$ and $q$. Let $\tau\in \R$, $\xi\in \omega^{\perp}: = \{x\in \R^n: x\cdot\omega = 0\}$, and $\eta\in \mathbb{S}^{n-1}$. For $\theta\in C^\infty([0,T]\times \R^n;\R^n)$ with $\supp (\theta(t,\cdot))\subset \Omega$, we define the function $\Theta$ as follows: 
\begin{align}\label{eq_amplitude_eta}
\Theta(t,x) = \eta\cdot \nabla(e^{-i(t\tau+x\cdot\xi)}e^{-i\int_\R\omega\cdot \theta(t,x+s\omega)\rd s}),\quad (t,x)\in [0,T]\times \R^n,
\end{align}
which satisfies $\omega\cdot\nabla\Theta = 0$ in $(0,T)\times \R^n$ by a direct computation. Note that one can also take $\Theta$ as a constant since it also satisfies the transport equation $\omega\cdot\nabla\Theta=0$. Both cases will be utilized in the subsequent section.

Next, for $\omega\in \mathbb{S}^{n-1}$, we define a function $W$ by
\begin{align}\label{eq_amplitude_e_non}
W(t,x) : = {e^{i {c'(t)(x\cdot \omega)^2 \over 8c(t)^2}}} e^{i\int_0^\infty A(t,x+s\omega)\cdot\omega\rd s},\quad (t,x)\in (0,T)\times \R^n,
\end{align}
which is a solution to $\omega\cdot \nabla_{A} W { - {ic'(t)(x\cdot\omega)\over 4c(t)^2} W}= 0$.
We then define the leading amplitude:
\begin{align}\label{eq_amplitude_v_0_non}
V_0 (t,x) := \zeta(t)\Theta (t,x) W(t,x).
\end{align}
It is clear that $V_0\in C^\infty([0,T]\times\R^n)$ is a {smooth} function satisfying the first equation in \eqref{CON:a_k} and, based on the definition of $\zeta$, we have 
$$
V_0(t,x)=0\quad \hbox{ for  $(t,x)\in ((0,h)\cup(T-h,T))\times \Omega$}.
$$

{Now we construct the terms $V_k$, $k= 1,2,\cdots, N$. Denote $H(t,x):=A(t,x) -\frac{c'(t)}{4c(t)^2}x$. 
From \eqref{CON:a_k}, we know that the function $V_k$ is the solution to the transport equation: 
 \[
 \omega\cdot\nabla_H V_{k} = \frac{iL_{c,A,q} V_{k-1}}{2\sqrt{c(t)}}.
 \]
For a fixed $x\in \Omega$, by imposing vanishing initial conditions $V_k(y)=0$ for $y\in \{y\in \R^n:\, (y-x)\cdot \omega =0\}$, we have \begin{align}\label{eq_amplitude_v_l_non}
V_k (t,y+ s\omega) = \frac{i }{2\sqrt{c(t)} } \int_0^s e^{-i\int_{s_1}^s H (t,y+ s_2\omega)\cdot\omega\rd s_2}
L_{c, A , q }V_{k-1} (t,y+s_1\omega)\rd s_1.
\end{align}}
Since the regularity and the support of $V_k$ inherit from $V_0$, it follows that $V_k\in C^\infty([0,T]\times\R^n)$ and 
$$
V_k(t,x)=0\quad \hbox{ for  $(t,x)\in ((0,h)\cup(T-h,T))\times \Omega$}.
$$

When $c(t)\equiv 1$, we construct geometric optics solutions with higher order regularity in the following two propositions. For non-constant $c(t)$, the GO solutions are constructed in the Appendix, in which we concern the inverse problem for linear MSE only.

\begin{prop}\label{prop:GO_Nonconcentrated}
Suppose that $A \in  C^\infty(\overline{Q};\R^n)$,  $q\in C^\infty(\overline{Q};\R)$ satisfy 
$A=q=0$ on $(0,T)\times \mathcal{O}$.
Let $\omega\in \mathbb{S}^{n-1}$ and $m\in\mathbb{N}$. Then there exist GO solutions to the linear magnetic Schr\"odinger equation  $L_{A,q}v=0$ in $Q$ of the form
$$
v(t,x) = e^{i\Phi(t,x)}  \LC\sum^{N}_{k=0} \rho^{-k}V_k(t,x) \RC  + R_\rho(t,x), 
$$
satisfying the initial condition 
$v(0,\cdot)=0$ in $\Omega$. Here $V_k\in H^{m}(Q)$ are given by \eqref{eq_amplitude_v_0_non} - \eqref{eq_amplitude_v_l_non} and satisfy
\begin{align}\label{EST:ak}
\|V_k\|_{H^{m}(Q)}\lesssim \langle \tau,\xi\rangle^{2k+m+1} h^{-k-m},~~0\leq k\leq N 
\end{align}
for any $\tau\in\R$, $h >0$ small enough and $\xi \in \omega^\perp$.
The remainder term $R_\rho$ satisfies
\begin{align}\label{EST:r}
\|R_\rho\|_{H^m(Q)}\lesssim\rho^{-N + 2m}\langle \tau,\xi\rangle^{2N+m+3}h^{-(N+m+1)}. 
\end{align}

In particular, as $\rho\rightarrow\infty$, we have
\begin{align}\label{EST:ak rho}
\|V_k\|_{H^{m}(Q)}= O(1),~~0\leq k\leq N 
\end{align}
and 
\begin{align}\label{EST:r rho}
\|R_\rho\|_{H^m(Q)}= O(\rho^{-N + 2m}).
\end{align}
\end{prop}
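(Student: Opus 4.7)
The plan is to construct each amplitude $V_k$ from the explicit formulas \eqref{eq_amplitude_v_0_non}--\eqref{eq_amplitude_v_l_non} (specialized to $c(t)\equiv 1$, so that \eqref{CON:a_k} reduces to $\omega\cdot\nabla_A V_k=\tfrac{i}{2}L_{A,q}V_{k-1}$), establish the weighted Sobolev estimate \eqref{EST:ak} by direct counting, and then produce the remainder $R_\rho$ as the unique solution of the IBVP \eqref{CON:r} delivered by Proposition \ref{prop_linear_est_higher}. The asymptotic forms \eqref{EST:ak rho} and \eqref{EST:r rho} then fall out by fixing $\tau,\xi,h$ and reading off the power of $\rho$.

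First I would verify that \eqref{eq_amplitude_v_0_non}--\eqref{eq_amplitude_v_l_non} actually solve \eqref{CON:a_k}. Since $\omega\cdot\nabla\Theta=0$ holds by construction in \eqref{eq_amplitude_eta} and $W$ in \eqref{eq_amplitude_e_non} is manufactured so that $\omega\cdot\nabla_A W=0$ (the integral $\int_0^\infty A(t,x+s\omega)\cdot\omega\,ds$ is the integrating factor), the product $V_0=\zeta\Theta W$ satisfies $\omega\cdot\nabla_A V_0=0$. For $k\ge 1$, formula \eqref{eq_amplitude_v_l_non} is exactly the method of characteristics applied to the inhomogeneous transport equation with trivial data on the hyperplane $\{(y-x)\cdot\omega=0\}$, so each $V_k$ solves its equation by construction; because $\zeta$ is supported in $(h,T-h)$, every $V_k$ vanishes near $t=0$ and $t=T$, which makes the trace and initial conditions in later arguments automatic.

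Next I would induct on $k$ to prove \eqref{EST:ak}. For $V_0$, the oscillatory factor $e^{-i(t\tau+x\cdot\xi)}$ in $\Theta$ contributes $\langle\tau,\xi\rangle^{m+1}$ to $\|\Theta\|_{C^m(\overline Q)}$ (with exponent $m+1$ rather than $m$ because of the outer $\eta\cdot\nabla$ in \eqref{eq_amplitude_eta}), $W$ is $C^m$-bounded uniformly in the parameters, and $\|\zeta\|_{C^m}\lesssim h^{-m}$, yielding the case $k=0$. For the induction step, $\|L_{A,q}V_{k-1}\|_{H^m}\lesssim\|V_{k-1}\|_{H^{m+2}}$, and the $\omega$-integral in \eqref{eq_amplitude_v_l_non} is Sobolev-bounded on the compact support. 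The crucial observation is that $h^{-1}$ factors arise \emph{only} when a $t$-derivative lands on $\zeta$, not from the iteration itself; with careful bookkeeping this yields the $h^{-k-m}$ in \eqref{EST:ak} rather than a cruder $h^{-2k-m}$.

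For the remainder, set $F_\rho:=-\rho^{-N}e^{i\Phi}L_{A,q}V_N$. Because $V_N$ is supported away from $t=0$, so is $F_\rho$, hence $F_\rho\in\mathcal{H}_0^{2\kappa}(Q)$ for any $\kappa$ with $2\kappa\ge m$; Proposition \ref{prop_linear_est_higher} then produces a unique $R_\rho\in\mathcal{H}_0^{2\kappa}(Q)$ solving \eqref{CON:r} with $\|R_\rho\|_{H^{2\kappa}}\lesssim\|F_\rho\|_{H^{2\kappa}}$. To bound $\|F_\rho\|_{H^m}$, I would exploit the identification $H^m(Q)\sim H^{m,m}(Q)$: each spatial derivative of $e^{i\Phi}$ contributes a factor $\rho$, while each time derivative contributes $\rho^2$ since $\p_t\Phi=-\rho^2|\omega|^2$. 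The time-derivative factor dominates, producing $\rho^{2m}$ from $m$ time derivatives, and combining with \eqref{EST:ak} applied to $V_N$ at Sobolev order $m+2$ gives \eqref{EST:r}. The main obstacle, I expect, is exactly this bookkeeping across the three scales $\rho\to\infty$, $h\to 0^+$, $\langle\tau,\xi\rangle\to\infty$: one must verify that the $\rho^{2m}$ loss really stems only from time derivatives of $e^{i\Phi}$, that $e^{i\Phi}$ itself introduces no $h^{-1}$ factors, and that the stated powers of $h^{-1}$ in \eqref{EST:ak}--\eqref{EST:r} are tight. Once this is done, the freedom to take $N$ arbitrarily large makes the remainder an arbitrarily high-order perturbation as $\rho\to\infty$.
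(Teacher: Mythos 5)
Your proposal is correct and follows essentially the same route as the paper: verify that \eqref{eq_amplitude_v_0_non}--\eqref{eq_amplitude_v_l_non} solve the transport hierarchy \eqref{CON:a_k}, obtain \eqref{EST:ak} by counting how many $t$-derivatives (each costing $|\tau|h^{-1}$) and $x$-derivatives (each costing $|\xi|$) accumulate through the iteration, and produce $R_\rho$ from Proposition \ref{prop_linear_est_higher} applied to \eqref{CON:r}, with the $\rho^{2m}$ loss traced to $\p_t\Phi=O(\rho^2)$. The only caution is that the crude bound $\|L_{A,q}V_N\|_{H^m}\lesssim\|V_N\|_{H^{m+2}}$ would give $h^{-(N+m+2)}$ rather than the stated $h^{-(N+m+1)}$, so one must (as you anticipate) track that only time derivatives generate $h^{-1}$ factors; in any case only the $\rho$-asymptotics \eqref{EST:ak rho}--\eqref{EST:r rho} are used later.
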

Here we denote $\langle \tau,\xi\rangle = (1+\tau^2+|\xi|^2)^{1/2}$ for $\tau\in\R$ and $\xi\in\R^n$.
\begin{proof}
We first note that, from the definition of $\zeta$ and $\Theta$, the function $V_k$ satisfy the estimate 
$$
\p^m_t V_0 (t,x) \lesssim |\tau|^{m}h^{-m},\quad \p^m_x V_0 (t,x) \lesssim |\xi|^{m+1},
$$
and thus
\begin{align}\label{eq_amplitude_est}
\|V_0(t,\cdot)\|_{H^m(Q)} \lesssim \langle \tau,\xi\rangle^{m+1}h^{-m}.
\end{align}
From \eqref{eq_amplitude_v_l_non}, $V_k$ gains $k$ derivatives on $t$ and $2k$ extra derivatives on $x$:
\begin{align}\label{eq_ammlitude_est_Vk}
\p^m_t V_k (t,x) \lesssim |\tau|^{m+k}h^{-m-k},\quad \p^m_x V_k (t,x) \lesssim |\xi|^{m+2k+1},
\end{align}
which immediately give \eqref{EST:ak}. 

Now for the existence of the remainder term $R_\rho$, we apply Proposition~\ref{prop_linear_est_higher} to deduce that there exists a unique solution of $R_\rho$ to \eqref{CON:r} so that
$$
\|R_\rho\|_{H^m(Q)}\leq C\rho^{-N} \|e^{i\Phi}( L_{A,q}V_N)\|_{H^m(Q)} .
$$
With this, from \eqref{EST:ak} and the definition of $\Phi$, \eqref{EST:r} holds.
\end{proof}

\begin{remark}\label{remark adjoint}
A similar argument can be used to construct the GO solutions for the backward equation $L_{A,q}v=0$ with $v(T,\cdot)=0$ in $\Omega$. In particular, the remainder term $R_\rho$ satisfies \eqref{EST:r rho} and $R_\rho|_\Sigma=R_\rho(T,\cdot)=0$.
\end{remark}

\subsection{GO solutions with concentrated amplitudes}\label{sec:go concentrated}
Different from the construction of the non-concentrated GO solutions above, the leading amplitude here will be concentrated along a given line. 

Fixed $x_0\in \Omega$ and a nonzero vector $\omega\in \R^n\setminus\{0\}$. We denote by $\gamma_{x_0,\omega}$ the straight line through $x_0$ in direction $\omega$ with the parametrization $\gamma_{x_0,\omega}(s)=x_0+s\omega$. Choosing $\alpha_1,\cdots,\alpha_{n-1}$ in $\R^n$ so that 
$
\{ {\omega\over |\omega|}, \alpha_1,\cdots, \alpha_{n-1}\}
$
is an orthonormal basis of $\R^n$.
For a small constant $\delta>0$, the leading amplitude is defined by 
\begin{align}\label{eq_amplitude_v_0_con}
V_0 (t,x) :=\iota (t) \prod_{a=1}^{n-1}\chi\LC \frac{(x-x_0)\cdot\alpha_a}{\delta} \RC W(t,x),
\end{align}
where $\iota \in C^\infty_0(0,T)$ is a smooth cutoff function of the time variable $t$, and $\chi\in C^\infty_0(\R)$ is a smooth function with $\chi(z)=1$ for $|z|\leq 1/2$ and $\chi(z)=0$ for $|z|\geq 1$. It is clear that $\omega\cdot \nabla \prod_{a=1}^{n-1}\chi\LC \frac{(x-x_0)\cdot\alpha_a}{\delta} \RC =0$. Together with $\omega\cdot\nabla_A W=0$, it follows that $V_0$ satisfies the first equation in \eqref{CON:a_k}. Notice that $V_0$
is supported in a $\delta$-neighborhood of the line $\gamma_{x_0,\omega}$. For $k= 1,\cdots, N$, we also take $V_k$ as in \eqref{eq_amplitude_v_l_non}. Following a similar argument as in Proposition~\ref{prop:GO_Nonconcentrated}, we have the following results.

For the following proposition, we again consider the case $c(t)\equiv 1$.
\begin{prop}\label{prop:GO_Concentrated}
Suppose that $A \in  C^\infty(\overline{Q};\R^n)$,  $q\in C^\infty(\overline{Q};\R)$ satisfy 
$A=q=0$ on $(0,T)\times \mathcal{O}$.
Let $\omega\in \R^n\setminus\{0\}$ and $m\in\mathbb{N}$. Then there exist GO solutions to the linear magnetic Schr\"odinger equation  $L_{A,q}v=0$ in $Q$ of the form
$$
v(t,x) = e^{i\Phi(t,x)}  \LC\sum^{N}_{k=0}\rho^{-k}V_k(t,x) \RC + R_\rho(t,x), 
$$
satisfying the initial condition $v(0,\cdot)=0$ in $\Omega$. Here $V_k\in H^{m}(Q)$ are given by \eqref{eq_amplitude_v_0_con} and \eqref{eq_amplitude_v_l_non} and satisfy
\begin{align}\label{EST:ak rho con} 
\|V_k\|_{H^{m}(Q)}= O(1),~~0\leq k\leq N 
\end{align}
and 
\begin{align}\label{EST:r rho con}
\|R_\rho\|_{H^m(Q)}= O(\rho^{-N + 2m}),\quad \hbox{as $\rho\rightarrow\infty$}.
\end{align}
\end{prop}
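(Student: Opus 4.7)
The plan is to mirror the proof of Proposition~\ref{prop:GO_Nonconcentrated}, replacing the non-localized leading amplitude with the concentrated one in \eqref{eq_amplitude_v_0_con} and exploiting the fact that the amplitudes $V_k$ here carry no large parameters ($\tau, \xi, h^{-1}$), so all $V_k$ estimates are trivially $O(1)$ in $\rho$. Concretely, I would first verify that $V_0$ from \eqref{eq_amplitude_v_0_con} solves the first transport equation in \eqref{CON:a_k}; the factor $\iota(t)$ is $x$-independent, the product $\prod_a \chi((x-x_0)\cdot\alpha_a/\delta)$ is constant along $\omega$ because $\omega\cdot\alpha_a=0$ by construction, and $W$ was designed so that $\omega\cdot\nabla_A W=0$ when $c(t)\equiv 1$. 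The support properties of $\iota$ and the $\chi$'s ensure that $V_0 \in C^\infty_0((0,T)\times\R^n)$ with $\supp V_0 \subset \supp(\iota)\times B_\delta(\gamma_{x_0,\omega})$, in particular $V_0$ vanishes in a neighborhood of $\{t=0\}$ and on $\Sigma$.

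Next I would construct $V_1,\dots,V_N$ recursively through the formula \eqref{eq_amplitude_v_l_non}: each is a smooth integral transform (in the $\omega$-direction) of the previous one, so smoothness and time-support are inherited, as is the spatial support inside the $\delta$-tube around $\gamma_{x_0,\omega}$ (since $A,q$ are smooth and the integration is along a line in $\omega$). Because $\iota,\chi,A,q,W$ are all fixed and $\rho$-independent, each $V_k$ is itself $\rho$-independent, so
\[
\|V_k\|_{H^m(Q)} = O(1),\qquad 0\le k\le N,
\]
which gives \eqref{EST:ak rho con}. This step replaces the delicate bookkeeping in $\tau,\xi,h$ from Proposition~\ref{prop:GO_Nonconcentrated} with a purely qualitative smoothness estimate.

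For the remainder, I would invoke Proposition~\ref{prop_linear_est_higher} applied to \eqref{CON:r} with $c\equiv 1$. The time supports of $V_N$ and hence of $L_{A,q}V_N$ are bounded away from $t=0$, so the source $-\rho^{-N}e^{i\Phi}L_{A,q}V_N$ belongs to $\mathcal{H}^{2\kappa}_0(Q)$ for any $\kappa$ with $2\kappa \ge m$, and Proposition~\ref{prop_linear_est_higher} yields
\[
\|R_\rho\|_{H^m(Q)} \lesssim \rho^{-N}\LN e^{i\Phi}L_{A,q}V_N\RN_{H^m(Q)}.
\]
Since $\Phi(t,x)=\rho(x\cdot\omega)-\rho^2|\omega|^2 t$, each $x$-derivative on $e^{i\Phi}$ costs a factor of $\rho$ while each $t$-derivative costs a factor of $\rho^2$; with $V_N$ bounded in $H^m$, the worst growth from the $m$ derivatives is $\rho^{2m}$, producing $\|R_\rho\|_{H^m(Q)}=O(\rho^{-N+2m})$, which is \eqref{EST:r rho con}.

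The only subtlety I anticipate is the compatibility check for applying Proposition~\ref{prop_linear_est_higher}: one must verify that all $\p_t^\ell$ of the source vanish at $t=0$ through order $2\kappa-1$ and that the source belongs to $\mathcal{H}^{2\kappa}_0(Q)$ with $2\kappa\ge m$. This is immediate from the time support of $\iota$ (hence of $V_N$), but it is the one point where the construction of $V_0$ must be coordinated with the regularity hypotheses of the well-posedness result; everything else is a direct adaptation of Proposition~\ref{prop:GO_Nonconcentrated}.
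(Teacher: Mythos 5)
Your proposal is correct and follows essentially the same route as the paper, which proves Proposition~\ref{prop:GO_Concentrated} simply by repeating the argument of Proposition~\ref{prop:GO_Nonconcentrated} with the localized leading amplitude \eqref{eq_amplitude_v_0_con}: since the amplitudes carry no large parameters, the $O(1)$ bounds \eqref{EST:ak rho con} are immediate, and the remainder estimate follows from Proposition~\ref{prop_linear_est_higher} applied to \eqref{CON:r} with the $\rho^{2m}$ loss from differentiating $e^{i\Phi}$, exactly as you describe. One inessential slip: $V_0$ does not in general vanish on $\Sigma$ (the $\delta$-tube around the full line $\gamma_{x_0,\omega}$ meets $\partial\Omega$), but this is never needed, since only the remainder $R_\rho$ is required to have zero lateral boundary data.
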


\begin{remark}
A similar argument can be used to construct the GO solutions for the backward equation $L_{A,q}v=0$ with $v(T,\cdot)=0$ in $\Omega$.
\end{remark}

\section{Recovery of the magnetic and electric potentials}\label{sec:linear}
This section is devoted to studying the inverse problem for the linearized MSE. After employing a first-order linearization and establishing an integral identity connecting the unknown linear coefficients with the DN map, the GO solutions constructed in Section~\ref{sec:go nonconcentrated} will be utilized to uniquely recover both $A$ and $q$.

Throughout this section, 
let $\kappa>\frac{n+1}{2}$ be an integer, and {$B_{j,\sigma\beta}\in C^\infty(\overline{Q};\R)$ satisfy $B_{1,\sigma\beta}-B_{2,\sigma\beta}=0$ on $(0,T)\times\mathcal{O}$. Moreover, suppose that $A_j \in  C^\infty(\overline{Q};\R^n)$ and $q_j\in C^\infty(\overline{Q};\R)$ satisfying $A_j=q_j=0$ on $(0,T)\times\mathcal{O}$ for $j=1,\,2$.}
We denote 
\begin{align}\label{DEF:tilde coefficient}
\widetilde B_{\sigma\beta} = B_{1,\sigma\beta}-B_{2,\sigma\beta},\quad \widetilde A = A_1-A_2,\quad \widetilde q = q_1-q_2
\end{align}
and extend them by zero outside $\Omega$ so that they are defined in $[0,T]\times \R^n$.

For a fixed $f\in \widetilde{\mathcal{H}}^{2\kappa + \frac{3}{2}}_0(\Sigma)$, Proposition \ref{prop_nonlinear_wellposed} yields that for sufficiently small $|\varepsilon|<1$, there exists a unique solution $u_{j,\varepsilon} \in \mathcal{H}_0^{2\kappa}(Q)$, $j=1,\,2$, to the problem
\begin{align}\label{eq_5_IBVP}
\left\{\begin{array}{rrll}
(i\p_t + \Delta_{A_j}  + q_j) u_{j,\varepsilon}&=& N_j(t,x, u_{j,\varepsilon},\overline{u_{j,\varepsilon}})  &\hbox{ in }Q,\\
u_{j,\varepsilon}&=& \varepsilon f &\hbox{ on } \Sigma,\\
u_{j,\varepsilon}&=& 0 &\hbox{ on } \{t=0\}\times \Omega,
\end{array}  \right.
\end{align}
satisfying $\|u_{j,\varepsilon}\|_{H^{2\kappa}(Q)}\lesssim \|\varepsilon f\|_{ H^{2\kappa+\frac{3}{2}}(\Sigma)}$.

Let us consider the solution $v_j$ to the following problem for the linear MSE:
\begin{align}\label{eq_first_linearization_v}
\left\{\begin{array}{rrll}
(i\p_t+\Delta_{A_j}+q_j)v_j&=& 0 &\hbox{ in }Q,\\
v_j&=&  f &\hbox{ on } \Sigma,\\
v_j&=& 0 &\hbox{ on } \{t=0\}\times \Omega,
\end{array}  \right.
\end{align}
which satisfies $\|v_j \|_{H^{2\kappa}(Q)}\lesssim \|f\|_{ H^{2\kappa+\frac{3}{2}}(\Sigma)}$ based on Proposition \ref{prop_linear_boundary}.

From Proposition~\ref{prop_nonlinear_wellposed}, the map $\varepsilon \rightarrow u_{j,\varepsilon}$ is smooth, which implies the differentiability of $u_{j,\varepsilon}$ with respect to $\varepsilon$. In particular, the first derivative of the solution $u_{j,\varepsilon}$ at $\varepsilon=0$, i.e., $\p_\varepsilon u_{j,\varepsilon} |_{\varepsilon = 0}$, satisfies the linear magnetic Schr\"odinger equation \eqref{eq_first_linearization_v}.
\begin{lemma}\label{lemma_first_linerization}
Let $u_{j,\varepsilon}$ and $v_j$ be defined as above. Then
\begin{align}\label{eq_first_linearization}
\p_\varepsilon u_{j,\varepsilon} |_{\varepsilon = 0} = v_j,\quad \hbox{in} \quad H^{2\kappa}(Q). 
\end{align}
\end{lemma}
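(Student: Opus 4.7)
The plan is to differentiate the nonlinear IBVP \eqref{eq_5_IBVP} in $\varepsilon$ at $\varepsilon=0$ and identify the resulting linearized problem with \eqref{eq_first_linearization_v}, concluding by uniqueness. The legitimacy of this formal differentiation is already built into the construction of $u_{j,\varepsilon}$: in the proof of Proposition~\ref{prop_nonlinear_wellposed}, the solution is produced by the implicit function theorem as $u_{j,\varepsilon} = v_{\varepsilon f} + \psi(\varepsilon f)$ with $\psi$ smooth into $\mathcal{H}_0^{2\kappa}(Q)$, and the map $f \mapsto v_f$ (through $\mathcal{S}^L$ and the extension step) is linear and bounded. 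Consequently $\varepsilon \mapsto u_{j,\varepsilon}$ is smooth from a neighborhood of $0 \in \mathbb{R}$ into $\mathcal{H}_0^{2\kappa}(Q)$, so $w_j := \partial_\varepsilon u_{j,\varepsilon}|_{\varepsilon = 0}$ exists in $H^{2\kappa}(Q)$.

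The first step is to record that $u_{j,0} \equiv 0$: indeed, the zero function solves \eqref{eq_5_IBVP} with $\varepsilon = 0$ (both the boundary and initial data vanish and $N_j(t,x,0,0) = 0$ since each term in \eqref{eq_1_nonlinear} has $\sigma+\beta \ge 2$ with $\sigma \ge 1$), and the uniqueness part of Proposition~\ref{prop_nonlinear_wellposed} identifies this with the solution. Next, differentiating the equation, boundary condition, and initial condition of \eqref{eq_5_IBVP} in $\varepsilon$ at $\varepsilon = 0$ (which is permitted by the smoothness noted above) yields
\begin{align*}
(i\partial_t + \Delta_{A_j} + q_j) w_j = \partial_\varepsilon N_j(t,x,u_{j,\varepsilon},\overline{u_{j,\varepsilon}})\big|_{\varepsilon=0}, \qquad w_j|_\Sigma = f, \qquad w_j|_{t=0} = 0.
\end{align*}
The chain rule gives
\begin{align*}
\partial_\varepsilon N_j(t,x,u_{j,\varepsilon},\overline{u_{j,\varepsilon}})\big|_{\varepsilon=0} = \sum_{\substack{\sigma \ge 1 \\ 2 \le \sigma + \beta \le M}} B_{j,\sigma\beta}\bigl(\sigma\, u_{j,0}^{\sigma-1}\,\overline{u_{j,0}}^{\beta}\, w_j + \beta\, u_{j,0}^{\sigma}\,\overline{u_{j,0}}^{\beta-1}\,\overline{w_j}\bigr),
\end{align*}
and since $u_{j,0} = 0$ while each surviving monomial still carries a positive power of $u_{j,0}$ or $\overline{u_{j,0}}$ (because $\sigma + \beta \ge 2$ forces either $\sigma - 1 \ge 1$ and $\beta$ arbitrary, or $\sigma = 1$ and then $\beta \ge 1$, etc.), every term vanishes. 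Hence the right-hand side is identically zero.

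Therefore $w_j$ solves the same linear IBVP \eqref{eq_first_linearization_v} as $v_j$, and Proposition~\ref{prop_linear_boundary} (uniqueness in $\mathcal{H}_0^{2\kappa}(Q)$) gives $w_j = v_j$ in $H^{2\kappa}(Q)$, which is the claim. The only delicate point is justifying the $\varepsilon$-differentiation in the $H^{2\kappa}$ topology rather than in a weaker space; this is handled by invoking the smoothness of $\psi$ supplied by the implicit function theorem, together with the linearity of the map $f\mapsto v_f$, so no new PDE estimate is needed beyond what is already established in Propositions~\ref{prop_linear_boundary} and \ref{prop_nonlinear_wellposed}.
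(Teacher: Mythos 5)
Your proof is correct, but it takes a different route from the paper's. You rely on the a priori smoothness of $\varepsilon \mapsto u_{j,\varepsilon}$ supplied by the implicit function theorem in Proposition~\ref{prop_nonlinear_wellposed}, differentiate the IBVP \eqref{eq_5_IBVP} in $\varepsilon$, observe that the $\varepsilon$-derivative of the nonlinearity vanishes at $\varepsilon=0$ because every resulting monomial still carries total degree $\sigma+\beta-1\ge 1$ in $(u_{j,0},\overline{u_{j,0}})=(0,0)$, and then identify $\p_\varepsilon u_{j,\varepsilon}|_{\varepsilon=0}$ with $v_j$ by uniqueness for the linear problem (Proposition~\ref{prop_linear_boundary}). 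The paper instead works directly with the difference quotient $U_j=\varepsilon^{-1}(u_{j,\varepsilon}-u_{j,0})$: it writes the linear IBVP solved by $U_j-v_j$, whose source is $\varepsilon^{-1}N_j(t,x,u_{j,\varepsilon},\overline{u_{j,\varepsilon}})$ with homogeneous data, and applies the energy estimate of Proposition~\ref{prop_linear_est_higher} together with the Banach algebra property of $H^{2\kappa}(Q)$ to get $\|U_j-v_j\|_{H^{2\kappa}(Q)}\lesssim\sum\varepsilon^{\sigma+\beta-1}\|B_{j,\sigma\beta}\|_{H^{2\kappa}(Q)}\|f\|^{\sigma+\beta}\to 0$. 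The paper's argument is quantitative (it yields an $O(\varepsilon)$ rate for the difference quotient) and needs only the well-posedness estimates, not the smoothness of the solution map; your argument is shorter and more structural, but its legitimacy hinges entirely on the smoothness of $\psi$ and of the substitution map $u\mapsto N_j(t,x,u,\overline{u})$ on the Banach algebra $\mathcal{H}_0^{2\kappa}(Q)$, which you correctly note is already established in the paper. Both are valid; only make sure, as you do, that the differentiation of the trace and of the polynomial nonlinearity is taken in the $\mathcal{H}_0^{2\kappa}(Q)$ topology so that the uniqueness statement applies in the right space.
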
 
\begin{proof}
Denote $U_j  := \frac{u_{j,\varepsilon}-u_{j,0}}{\varepsilon} =\frac{u_{j,\varepsilon}-0}{\varepsilon} $. 
Here we used the fact that with trivial data, the well-posedness theorem for the nonlinear MSE implies $u_{j,0}=0$.
Then $U_j\in H^{2\kappa}(Q)$ is the solution to 
\[
\left\{\begin{array}{rrll}
(i\p_t+\Delta_{A_j}+q_j) U_j&=& \frac{1}{\varepsilon}N_j(t,x,u_{j,\varepsilon},\overline{u_{j,\varepsilon}})  &\hbox{ in }Q,\\
U_j&=&  f &\hbox{ on } \Sigma,\\
U_j&=& 0 &\hbox{ on } \{t=0\}\times \Omega.
\end{array}  \right.
\]
Since $v_j$ is the solution to \eqref{eq_first_linearization_v}, it follows that $U_j-v_j\in H^{2\kappa}(Q)$ solves the following IBVP:
\[
\left\{\begin{array}{rrll}
(i\p_t+\Delta_{A_j}+q_j)(U_j -v_j)&=& \frac{1}{\varepsilon}N_j(t,x,u_{j,\varepsilon},\overline{u_{j,\varepsilon}}) &\hbox{ in }Q,\\
U_j -v_j &=& 0 &\hbox{ on } \Sigma,\\
U_j -v_j &=& 0 &\hbox{ on } \{t=0\}\times \Omega.
\end{array}  \right.
\]
Since $H^{2\kappa}(Q), \kappa>{n+1\over 2}$ is a Banach algebra, we apply Proposition~\ref{prop_linear_est_higher} to the above equation and use the estimate $\|u_{j,\varepsilon} \|_{H^{2\kappa}(Q)}\lesssim \|\varepsilon f\|_{ H^{2\kappa+\frac{3}{2}}(\Sigma)}$ to derive
\begin{align*}
\|U_j-v_j\|_{H^{2\kappa}(Q)}
&\lesssim
\frac{1}{\varepsilon}\|N_j(t,x,u_{j,\varepsilon},\overline{u_{j,\varepsilon}})\|_{H^{2\kappa}(Q)}
\\
&\lesssim
\sum\limits_{\substack{\sigma\geq 1\\
2\le \sigma+\beta\le M}}
\varepsilon^{\sigma+\beta-1}\|B_{j,\sigma\beta}\|_{H^{2\kappa}(Q)}\|f\|^{\sigma+\beta}_{ H^{2\kappa+\frac{3}{2}}}.
\end{align*}  
This ends the proof of \eqref{eq_first_linearization} by letting $\varepsilon$ go to zero. 
\end{proof}

\subsection{An integral identity}
We are ready to show the following identity.
\begin{prop}\label{prop_int_id_par_A_q} 
Let $f_0\in \widetilde{\mathcal{H}}^{2\kappa + \frac{3}{2}}_0(\Sigma)$. If  $\Lambda^\sharp_{A_1,q_1,N_1} =\Lambda^\sharp_{A_2,q_2,N_2}$ on $\mathcal{D}_\delta (\Sigma)$,
then
\begin{align}\label{eq_int_id_par_A_q}
0 = \int_Q \LC 2i \widetilde A\cdot\nabla + i\nabla \cdot \widetilde{A}-(|A_1|^2-|A_2|^2)+ \widetilde q\RC v_1 \overline{v_0}\rd x\rd t,
\end{align}
for any $v_1\in H^{2\kappa}(Q)$ satisfying \eqref{eq_first_linearization_v} with $j=1$ and $v_0\in H^{2\kappa}(Q)$ satisfying 
\begin{align}\label{eq_prop51_IBVP}
\left\{\begin{array}{rrll}
(i\p_t + \Delta_{A_2}  + q_2) v_0&=& 0 &\hbox{ in }Q,\\
v_0&=& f_0&\hbox{ on } \Sigma,\\
v_0&=& 0 &\hbox{ on } \{t=T\}\times \Omega.
\end{array}  \right.
\end{align}
\end{prop}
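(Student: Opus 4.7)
The plan is to carry out a first-order linearization in the boundary parameter and then convert the resulting equality into a test-function identity via duality against the adjoint problem. For fixed $f \in \widetilde{\mathcal{H}}^{2\kappa+\frac{3}{2}}_0(\Sigma)$ and $|\varepsilon|$ small enough that $\varepsilon f \in \mathcal{D}_\delta(\Sigma)$, the hypothesis $\Lambda^\sharp_{A_1,q_1,N_1} = \Lambda^\sharp_{A_2,q_2,N_2}$ yields $\nu\cdot(\nabla + iA_1)u_{1,\varepsilon} = \nu\cdot(\nabla + iA_2)u_{2,\varepsilon}$ on $\Sigma^\sharp$. Since $A_j$ vanishes on $(0,T)\times\mathcal{O}$ and $\mathcal{O}$ is a neighborhood of $\partial\Omega$, both magnetic potentials vanish on $\Sigma$, so the identity reduces to $\partial_\nu u_{1,\varepsilon} = \partial_\nu u_{2,\varepsilon}$ on $\Sigma^\sharp$. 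Using the smooth dependence of $\varepsilon \mapsto u_{j,\varepsilon}$ from Proposition~\ref{prop_nonlinear_wellposed} together with Lemma~\ref{lemma_first_linerization}, I differentiate this equality at $\varepsilon=0$, interchanging differentiation with the continuous Neumann trace $\mathcal{H}_0^{2\kappa}(Q) \to H^{2\kappa-\frac{3}{2}}(\Sigma^\sharp)$, to conclude that $\partial_\nu v_1 = \partial_\nu v_2$ on $\Sigma^\sharp$.

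Setting $w := v_1 - v_2 \in \mathcal{H}_0^{2\kappa}(Q)$, the shared boundary data and vanishing initial condition give $w|_\Sigma = 0$, $w(0,\cdot) = 0$, and by the preceding step $\partial_\nu w = 0$ on $\Sigma^\sharp$. Expanding $\Delta_A = \Delta + 2iA\cdot\nabla + i\nabla\cdot A - |A|^2$ and subtracting the equations satisfied by $v_1$ and $v_2$ produces
\begin{equation*}
L_{A_2,q_2} w \;=\; (L_{A_2,q_2} - L_{A_1,q_1})v_1 \;=\; -\bigl[2i\widetilde A\cdot\nabla + i\nabla\cdot \widetilde A - (|A_1|^2-|A_2|^2) + \widetilde q\bigr] v_1 \quad \text{in } Q.
\end{equation*}

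Next I pair this identity with $\overline{v_0}$ in $L^2(Q)$. The Green-type identity for the magnetic Schr\"odinger operator gives
\begin{equation*}
\int_Q L_{A_2,q_2} w \cdot \overline{v_0} \,dx\, dt = \int_Q w \cdot \overline{L_{A_2,q_2} v_0}\, dx \, dt + \bigl[i\LA w,v_0\RA_{L^2(\Omega)}\bigr]_0^T + \int_\Sigma\!\!\bigl[\nu\cdot(\nabla+iA_2)w\bigr] \overline{v_0} \,dS\,dt - \int_\Sigma w\,\overline{\nu\cdot(\nabla+iA_2) v_0}\,dS\,dt.
\end{equation*}
The interior adjoint term vanishes by \eqref{eq_prop51_IBVP}; the temporal bracket vanishes because $w(0,\cdot)=0$ and $v_0(T,\cdot)=0$; the last lateral term vanishes because $w|_\Sigma = 0$; and the remaining lateral integral reduces to $\int_\Sigma \partial_\nu w\,\overline{v_0}\,dS\,dt$ since $A_2|_\Sigma=0$. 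Taking the natural partial-data convention $\supp f_0 \subset \overline{\Sigma^\sharp}$ so that $v_0|_{\Sigma\setminus \Sigma^\sharp}=0$, this last piece also vanishes by $\partial_\nu w = 0$ on $\Sigma^\sharp$. What remains is exactly \eqref{eq_int_id_par_A_q}.

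The main point requiring care is the first step: commuting $\partial_\varepsilon|_{\varepsilon=0}$ with the partial magnetic Neumann trace. This is justified because Proposition~\ref{prop_nonlinear_wellposed} ensures that $\varepsilon \mapsto u_{j,\varepsilon}$ is smooth into $\mathcal{H}^{2\kappa}_0(Q)$, and since $2\kappa > \frac{n+1}{2} \ge \frac{3}{2}$ the magnetic Neumann trace is a bounded linear operator on this space; hence the Fr\'echet derivative at $\varepsilon=0$ is transported through the trace. All subsequent boundary-term cancellations are routine given the regularity supplied by Proposition~\ref{prop_linear_boundary} and Remark~\ref{remark_adjoint_boundary}.
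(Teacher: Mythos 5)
The first half of your argument (linearizing the DN map at $\varepsilon=0$ and transporting the derivative through the Neumann trace to get $\p_\nu v_1=\p_\nu v_2$ on $\Sigma^\sharp$) matches the paper and is fine. The gap is in how you dispose of the lateral boundary term $\int_\Sigma \p_\nu w\,\overline{v_0}$. You kill it by \emph{adding the hypothesis} $\supp f_0\subset\overline{\Sigma^\sharp}$, so that $v_0$ vanishes on $\Sigma\setminus\Sigma^\sharp$. But the proposition asserts the identity for an \emph{arbitrary} $f_0\in\widetilde{\mathcal{H}}^{2\kappa+\frac32}_0(\Sigma)$, and this generality is essential: in the proof of Theorem~\ref{thm:recover A} the adjoint solution $v_0$ is a geometric optics solution with non-localized amplitude whose trace is supported on essentially all of $\Sigma$, not on $\overline{\Sigma^\sharp}$. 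With your restriction the integral identity would not be available for those test functions and the recovery of $A$ and $q$ would collapse. So you have proved a strictly weaker statement, and the ``convention'' you invoke is not a convention but an unjustified strengthening of the hypotheses.

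The paper closes this gap with a unique continuation argument, which is the real content of the proposition. Since $A_j=q_j=0$ on $(0,T)\times\mathcal{O}$, the source $F$ vanishes near $\p\Omega$, so $\widetilde v:=v_1-v_2$ satisfies the free Schr\"odinger equation in $(0,T)\times\mathcal{O}$ with $\widetilde v|_\Sigma=0$ and $\p_\nu\widetilde v|_{\Sigma^\sharp}=0$; the quantitative unique continuation estimate of \cite[Corollary 1]{Bella-Fraj2020} then forces $\widetilde v=0$ in a collar region $(0,T)\times(\Omega_3\setminus\Omega_2)$. Introducing a cutoff $\chi$ equal to $0$ near $\p\Omega$ and $1$ in the interior, the function $v^\sharp=\chi\widetilde v$ has vanishing Dirichlet \emph{and} Neumann traces on the whole of $\Sigma$, so the integration by parts against $\overline{v_0}$ produces no boundary contribution for any $f_0$, and the commutator $[\Delta,\chi]\widetilde v$ vanishes because it is supported exactly where $\widetilde v$ has been shown to vanish. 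To repair your proof you need to insert this unique continuation plus cutoff step in place of the support assumption on $f_0$.
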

\begin{proof}
Due to the hypothesis $\Lambda^\sharp_{A_1,q_1,N_1} =\Lambda^\sharp_{A_2,q_2,N_2}$ on $\mathcal{D}_\delta (\Sigma)$ and Lemma~\ref{lemma_first_linerization}, one can deduce that the DN maps for the linearized MSE with $(A_j,q_j)$ are the same, that is,
\[
\Lambda^{\sharp, L}_{A_1,q_1}(f)=\Lambda^{\sharp, L}_{A_2,q_2}(f)
\]
for any $f\in \widetilde{\mathcal{H}}^{2\kappa + \frac{3}{2}}_0(\Sigma)$. Together with the hypothesis
$A_j=0$ on $(0,T)\times \mathcal{O}$,
it implies
\begin{align*}
\p_\nu v_1|_{\Sigma^\sharp} = \p_\nu v_2|_{\Sigma^\sharp}.
\end{align*}

Recall $\mathcal{O}\subset \overline{\Omega}$ is an open neighborhood of $\p\Omega$. Let $\mathcal{O}_j$, $j =1,\,2,\,3$, be three open subsets of $\mathcal{O}$ such that they are also neighborhood of $\p\Omega$ and $\overline{\mathcal{O}}_{j+1}\subset\mathcal{O}_j$, $\overline{\mathcal{O}}_j\subset \mathcal{O}$. Denote
\[
\Omega_j = \Omega\setminus \overline{\mathcal{O}}_j,\quad Q_j = (0,T)\times \Omega_j.
\]
We introduce a cut-off function $\chi\in C^\infty(\overline{\Omega})$ with $0\le \chi\le 1$ such that
\begin{align}\label{eq_def_cutoff_chi}
\chi= \begin{cases}
0,& \hbox{ in } \mathcal{O}_3\\
1,& \hbox{ in } \Omega_2.
\end{cases}
\end{align}

We denote $\widetilde{v} :=v_1-v_2$, which then solves the following IBVP:
\begin{align*}
\left\{\begin{array}{rrll}
L_{A_2,q_2} \widetilde{v} &=&  \LC -2i \widetilde A\cdot\nabla - i\nabla \cdot \widetilde{A} + (|A_1|^2-|A_2|^2) - \widetilde q\RC v_1 =: F   & \hbox{ in }Q,\\
\widetilde{v} &=&  0 &\hbox{ on } \Sigma,\\
\widetilde{v} &=& 0 &\hbox{ on } \{t=0\}\times \Omega,
\end{array} \right.  
\end{align*}
and satisfies $\p_\nu \widetilde{v}|_{\Sigma^\sharp} = 0$.
Here $F\in H^{2\kappa-1}(Q)$ 
and 
$F=0$ on $(0,T)\times\mathcal{O}$ due to $A = q = 0$ on $(0,T)\times\mathcal{O}$.
By letting $\gamma\rightarrow \infty$ in the unique continuation estimate in \cite[Corollary 1]{Bella-Fraj2020}, we derive
\begin{align}\label{prop_71_unique_continuation}
\|\widetilde{v}\|_{L^2((0,T)\times (\Omega_3\setminus\Omega_2))} = 0.
\end{align}

Next we set ${v}^\sharp = \chi \widetilde v$, then $v^\sharp= 0$ in $(0,T)\times\mathcal{O}_3$ and therefore, $v^\sharp|_{\Sigma}=\p_\nu v^\sharp|_{\Sigma}=0$. We have $v^\sharp$ solves the following IBVP:
\begin{align}\label{eq_ajoint_tilde_v 0}
\left\{\begin{array}{rrll}
L_{A_2,q_2}v^\sharp &=& F + [\Delta,\chi]\widetilde{v} & \hbox{ in }Q,\\
v^\sharp &=&  0 &\hbox{ on } \Sigma,\\
v^\sharp &=& 0 &\hbox{ on } \{t=0\}\times \Omega,
\end{array} \right.  
\end{align}
where we have used the fact that $\chi F=F$ 
since $\widetilde A =0$ and $\widetilde q = 0$ in $(0,T)\times\mathcal{O}$.

Let $v_0\in H^{2\kappa}(Q)$ be the solution to the adjoint IBVP $L_{A_2,q_2}v_0= 0$ with $v_0(T,\cdot) = 0$ in $\Omega$.
Multiplying \eqref{eq_ajoint_tilde_v 0} by $\overline{v_0}$, integrating over $Q$ and performing the integration by parts, we get
\begin{align}\label{eq_prop71_int_id_par}
0 = \int_Q \LC F + [\Delta,\chi]\widetilde{v} \RC \overline{v_0} \rd x \rd t.
\end{align}
Here we have used the conditions that $v^\sharp(0,\cdot)=0$ and $v_0(T,\cdot)=0$ in $\Omega$ and also $v^\sharp|_{\Sigma}=\p_\nu v^\sharp|_{\Sigma}=0$. 
From \eqref{prop_71_unique_continuation}, we get $\widetilde{v}=0$ almost everywhere (a.e.) in $(0,T)\times(\Omega_3\setminus \Omega_2)$ and thus $[\Delta,\chi]\widetilde{v}=0$ in $Q$.
Applying it in \eqref{eq_prop71_int_id_par}, the identity \eqref{eq_int_id_par_A_q} follows.
\end{proof}

\subsection{Recovery of $A$ and $q$}
Equipped with the integral identity, we are now ready to recover $A$ and $q$ by applying the GO solutions in Section~\ref{sec:go nonconcentrated} with {$m\geq 2\kappa +2$ and $N>2m$} to ensure sufficient regularity of the constructed GO solutions and the decay of their remainder terms $R_\rho$ as $\rho\rightarrow \infty$. Let $\omega\in \mathbb{S}^{n-1}$  and $\Phi(t,x) =\rho(x\cdot\omega-\rho t)$. We take the solutions $v_1\in H^m(Q)$ to \eqref{eq_first_linearization_v} with $j=1$ and $v_0\in H^m(Q)$ to the adjoint equation \eqref{eq_prop51_IBVP}: 
\begin{align}\label{recover A GO}
v_1 = e^{i\Phi(t,x)}\LC V_0^{(1)} + \sum_{k=1}^N \rho^{-k} V_k^{(1)}\RC+ R_{\rho}^{(1)},\quad 
v_0 = e^{i\Phi(t,x)}\LC V_0^{(0)} + \sum_{k=1}^N \rho^{-k} V_{k}^{(0)}\RC+ R_{\rho}^{(0)},
\end{align}
with $v_1(0,\cdot)=v_0(T,\cdot)=0$ in $\Omega$. Their amplitudes $V_k^{(1)}$ and $V_k^{(0)}$, $k\geq 0$, are as in \eqref{eq_amplitude_v_0_non}, \eqref{eq_amplitude_v_l_non} corresponding to $A_1$ and $A_2$, respectively, with $c(t)\equiv 1$. In particular, the leading terms are
\begin{align*}
V_0^{(1)} (t,x) := \zeta(t) \Theta (t,x) e^{i\int_0^\infty A_1(t,x+s\omega)\cdot\omega\rd s} \quad \hbox{and}\quad V_0^{(0)} (t,x) :=  \zeta(t) e^{i\int_0^\infty A_2(t,x+s\omega)\cdot\omega\rd s},
\end{align*}
where $\xi\in \omega^\perp$ and we choose $\theta=\widetilde{A}$ here so that
$$
\Theta(t,x) = \eta\cdot \nabla(e^{-i(t\tau+x\cdot\xi)}e^{-i\int_\R\omega\cdot \widetilde{A}(t,x+s\omega)\rd s}).
$$
\begin{theorem}\label{thm:recover A}  
Suppose all the hypotheses in Theorem \ref{thm_main} are satisfied.
Then $A_1=A_2$ and $q_1=q_2$. 
\end{theorem}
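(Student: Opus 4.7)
The plan is to substitute the non-concentrated GO solutions from Proposition~\ref{prop:GO_Nonconcentrated} into the integral identity \eqref{eq_int_id_par_A_q} of Proposition~\ref{prop_int_id_par_A_q} and analyze the result in the high-frequency limit $\rho\to\infty$. Since $v_1$ and $v_0$ share the common phase $\Phi(t,x)=\rho(x\cdot\omega-\rho t)$, the phases cancel in the product $v_1\overline{v_0}$, leaving an expansion in powers of $\rho^{-1}$ together with remainder terms $R_\rho^{(1)},R_\rho^{(0)}$ that vanish as $\rho\to\infty$ by \eqref{EST:r rho}, provided $m\ge 2\kappa+2$ and $N>2m$. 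Only the term $2i\widetilde A\cdot\nabla v_1\,\overline{v_0}$ in the integrand produces a positive power of $\rho$, through the action of $\nabla$ on $e^{i\Phi}$; its leading contribution is $-2\rho(\omega\cdot\widetilde A)V_0^{(1)}\overline{V_0^{(0)}}$. The term $i(\nabla\cdot\widetilde A)v_1\overline{v_0}$ vanishes identically by the gauge hypothesis $\nabla\cdot\widetilde A=0$, while $-(|A_1|^2-|A_2|^2)v_1\overline{v_0}$ and $\widetilde q\,v_1\overline{v_0}$ are both $O(1)$ in $\rho$.

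Dividing \eqref{eq_int_id_par_A_q} by $\rho$ and passing to the limit produces the leading-order identity
\begin{equation*}
\int_Q (\omega\cdot\widetilde A)(t,x)\,V_0^{(1)}(t,x)\,\overline{V_0^{(0)}(t,x)}\,dx\,dt=0.
\end{equation*}
Inserting the explicit form \eqref{eq_amplitude_v_0_non} with \eqref{eq_amplitude_e_non} (for $c\equiv 1$), one has $V_0^{(1)}\overline{V_0^{(0)}}=\zeta^2(t)\Theta(t,x)e^{i\int_0^\infty\omega\cdot\widetilde A(t,x+s\omega)\,ds}$, where $\Theta$ is given by \eqref{eq_amplitude_eta} with $\theta=\widetilde A$. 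The specific combination of the half-line phase inside $W$ and the full-line phase built into $\Theta$ is engineered so that the $\omega$-directional structure of $V_0^{(1)}\overline{V_0^{(0)}}$ encodes the X-ray transform of $\omega\cdot\widetilde A$ along lines parallel to $\omega$. Fourier inversion in the variables $(\tau,\xi)\in\R\times\omega^\perp$, together with the freedom in choosing $\eta\in\mathbb{S}^{n-1}$, then shows that the X-ray transform of $\omega\cdot\widetilde A$ vanishes along every line in direction $\omega$.

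Varying $\omega\in\mathbb{S}^{n-1}$ and applying the kernel characterization of the X-ray transform on vector fields---namely that its kernel consists of exact 1-forms $d\phi$---we conclude $\widetilde A=\nabla\phi$ for some scalar $\phi$. The gauge condition $\nabla\cdot\widetilde A=0$ forces $\phi$ harmonic in $x$; together with $\widetilde A=0$ on $(0,T)\times\mathcal{O}$ (so $\nabla\phi$ vanishes in a neighborhood of $\p\Omega$) and the simple connectedness of $\Omega$, $\phi$ must be constant in $x$, hence $\widetilde A\equiv 0$ in $Q$. The identity \eqref{eq_int_id_par_A_q} then collapses to $\int_Q\widetilde q\,v_1\overline{v_0}\,dx\,dt=0$. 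A second GO construction with the simpler choice $\theta\equiv 0$ in \eqref{eq_amplitude_eta} yields $V_0^{(1)}\overline{V_0^{(0)}}=-i(\eta\cdot\xi)\zeta^2(t)e^{-i(t\tau+x\cdot\xi)}$, so the limiting identity becomes the Fourier transform of $\zeta^2\widetilde q$. Varying $(\tau,\xi)\in\R\times\omega^\perp$ and $\omega\in\mathbb{S}^{n-1}$ sweeps out all of $\R^{n+1}$, and letting $h\to 0$ in the cutoff $\zeta$ forces $\widetilde q\equiv 0$ in $Q$.

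The main technical obstacle will be the careful reduction of the leading-order integral identity to a clean X-ray transform statement for $\widetilde A$: the amplitudes carry nonlinear phase factors $e^{\pm i\int A\cdot\omega\,ds}$, and the precise design of $W$ in \eqref{eq_amplitude_e_non} and $\Theta$ in \eqref{eq_amplitude_eta}---in particular the pairing of half-line and full-line ray integrals---is what lets the $\omega$-directional algebra recover the quantity $\omega\cdot\widetilde A$. The compact support of $\widetilde A$ inside $\Omega$, guaranteed by $\widetilde A=0$ on $(0,T)\times\mathcal{O}$, is essential for the integration-by-parts manipulations to have no boundary contributions from the region where the coefficients differ. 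Controlling the subleading terms in $\rho$ and the remainders $R_\rho^{(j)}$ is then routine given the bounds \eqref{EST:ak rho}--\eqref{EST:r rho}.
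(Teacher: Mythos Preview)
Your proposal is correct and follows the paper's approach closely through the construction of the GO solutions, substitution into the integral identity \eqref{eq_int_id_par_A_q}, and extraction of the leading-order relation $\int_Q(\omega\cdot\widetilde A)\,V_0^{(1)}\overline{V_0^{(0)}}\,dx\,dt=0$. The ``main technical obstacle'' you flag is precisely the computation the paper carries out explicitly: integrating $(\omega\cdot\widetilde A)\,e^{i\int_r^\infty\omega\cdot\widetilde A\,ds}$ in the $\omega$-direction produces $i(1-e^{i\int_\R\omega\cdot\widetilde A\,ds})$, and then an integration by parts in $x_\perp$ against the $\eta\cdot\nabla_\perp$ in $\Theta$ (with $\eta\perp\omega$) collapses the nonlinear exponentials, leaving the identity $\omega\cdot\widehat{\zeta^2\widetilde A}(\tau,\xi)=0$ for all $\xi\in\omega^\perp$---equivalently, vanishing of the X-ray transform of $\omega\cdot(\zeta^2\widetilde A)$.

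Your endgame differs slightly from the paper's. You invoke the kernel description of the longitudinal ray transform ($\widetilde A=\nabla\phi$), then use $\Delta\phi=0$ and the boundary vanishing of $\widetilde A$ to force $\phi$ constant. The paper stays purely on the Fourier side: from $\omega\cdot\widehat{\zeta^2\widetilde A}(\tau,\xi)=0$ for $\omega\perp\xi$ together with $\xi\cdot\widehat{\zeta^2\widetilde A}(\tau,\xi)=0$ (from $\nabla\cdot\widetilde A=0$), an orthonormal-basis argument gives $\widehat{\zeta^2\widetilde A}\equiv 0$ directly. Both routes are standard and equally short; the paper's avoids citing any X-ray transform result, while yours makes the geometric content more transparent. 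For $\widetilde q$ the two arguments are essentially identical.
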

\begin{proof}
Let $f:=v_1|_{\Sigma}$ and $f_0:=v_0|_{\Sigma}$ be in $\widetilde{\mathcal{H}}_0^{2\kappa+\frac{3}{2}}(\Sigma)$. From  $\Lambda^\sharp_{A_1,q_1,N_1} =\Lambda^\sharp_{A_2,q_2,N_2} $ on $\mathcal{D}_\delta (\Sigma)$ and Proposition~\ref{prop_int_id_par_A_q}, we obtain the integral identity \eqref{eq_int_id_par_A_q}. 
By substituting $v_1$ and $v_0$ above into \eqref{eq_int_id_par_A_q}, we obtain 
\begin{align}\label{eq_int_id_par_A_q 1}
0 &=2i \int_Q (\widetilde A\cdot\nabla v_1)\overline{v}_0\rd x \rd t + \int_Q \LC i\nabla \cdot \widetilde{A} -(|A_1|^2-|A_2|^2)+ \widetilde q\RC  v_1\overline{v}_0\rd x \rd t =:I_1+I_2.  
\end{align}
From Proposition~\ref{prop:GO_Nonconcentrated}, we know that $V_j=O(1)$ and $R_\rho = O(\rho^{-N+2m})$ in $H^{m}$ norm with $N>2m$. Since $H^m(Q)$ is a Banach algebra, using Sobolev embedding $H^m\subset C^1(\overline{Q})$ when $m>\frac{n+1}{2}+1$, we can derive that 
$$
\lim_{\rho\rightarrow \infty} \rho^{-1}I_2 =0.
$$
Since the gradient in $I_1$ will gain extra $\rho$ from the phase $\Phi$, the term $I_1$ can be splitted into
\begin{align*}
I_1=2i \int_Q (\widetilde A\cdot\nabla v_1)\overline{v}_0\rd x \rd t = -2\rho \int_Q (\omega\cdot\widetilde{A}) V^{(1)}_0\overline{V^{(0)}_0}\rd x \rd t + J,
\end{align*}
where $J$ has the same order of $\rho$ as $I_2$ and thus
$$
\lim_{\rho\rightarrow \infty} \rho^{-1}J = 0. 
$$
Next, we multiply \eqref{eq_int_id_par_A_q 1} by ${\rho}^{-1}$ and let $\rho\to \infty$ to deduce that
\begin{align*}
0 &= \int_Q (\omega\cdot\widetilde{A}) V^{(1)}_0\overline{V^{(0)}_0}\rd x \rd t \\
&= \int_Q (\omega\cdot\widetilde{A})\zeta^2(t)\Theta(t,x) e^{i\int_0^\infty \widetilde{A}(t,x+s\omega)\cdot\omega\rd s}  \rd x \rd t \\
&=\int_\R \int_{\R^n}  (\omega\cdot\widetilde{A})\zeta^2(t)\Theta(t,x) e^{i\int_0^\infty \widetilde{A}(t,x+s\omega)\cdot\omega\rd s}  \rd x \rd t,
\end{align*}
where we used the fact that $\widetilde{A}$ vanishes outside $\Omega$ and $\zeta\in C^\infty_0(\R)$ in the last identity.

Now we decompose $x\in \R^n$ as $x = x_\perp + r \omega$, where $r = x\cdot\omega$ and $x_\perp : = x-r \omega\in\omega^\perp$, and use the fact $\Theta(t,x) =\Theta(t,x_\perp)$ since $\omega\cdot \nabla \Theta= 0$. By change of variable $s\rightarrow r+s$, this results in
\begin{align}\label{ID tilde A}
0 & = \int_{\R}\int_{\omega^\perp}\int_{\R} (\omega\cdot \widetilde A)(t,x_\perp+ r  \omega)\zeta^2(t) \Theta(t,x_\perp) 
e^{i\int_{r}^\infty \omega\cdot\widetilde{A}(t,x_\perp+s\omega)\rd s}  \,dr \rd x_\perp  \rd t \notag\\
&= i\int_{\R}\int_{\omega^\perp}\zeta^2(t)  \Theta(t,x_\perp) 
\LC 1-e^{i\int_\R \omega\cdot\widetilde{A}(t,x_\perp+s\omega)\rd s}\RC \rd x_\perp  \rd t \notag \\
&=  i\int_{\R}\zeta^2(t)e^{-it\tau}\int_{\omega^\perp}  \LC\eta\cdot \nabla(e^{-i x_\perp\cdot\xi}e^{-i\int_\R\omega\cdot \widetilde{A}(t,x_\perp+s\omega)\rd s})\RC \LC 1-e^{i\int_\R \omega\cdot\widetilde{A}(t,x_\perp+s\omega)\rd s}\RC \rd x_\perp  \rd t,
\end{align}
where in the second identity we used the following fact:
\begin{align*}
    \int_\R (\omega\cdot \widetilde A)(t,x_\perp+r\omega)e^{i\int_{r}^\infty\omega\cdot\widetilde{A}(t,x_\perp+s\omega)\rd s}\,dr
&= i \int_\R \p_{r} \LC e^{i\int_{r}^\infty\omega\cdot\widetilde{A}(t,x_\perp+s\omega)\rd s}\RC\,dr\\
&= i \LC 1-e^{i\int_\R \omega\cdot\widetilde{A}(t,x_\perp+s\omega)\rd s}\RC.
\end{align*}
We first note that for a $H^1$ function $f$, one can decompose $\nabla f$ as
$$
\nabla f= \nabla_\perp f + (\omega\cdot\nabla f)\omega
$$
such that
$$
\eta\cdot\nabla f = \eta\cdot (\nabla_\perp f + (\omega\cdot\nabla f)\omega) = \eta\cdot\nabla_\perp f,
$$
when we take $\eta\in \mathbb{S}^{n-1}$ and $\eta\cdot\omega =0$.
Moreover, since $\widetilde{A}(t,\cdot)$ vanishes outside $\Omega\subset B(0,R)$ for some $R>0$,
$$1-e^{i\int_\R \omega\cdot\widetilde{A}(t,x_\perp+s\omega)\rd s} =0 \quad \hbox{ for }x_\perp\in \omega^\perp\cap (\R^n\setminus B(0,R)).$$
With this, by the integration by parts, we obtain
\begin{align*}
& \int_{\omega^\perp}  \LC\eta\cdot \nabla_{\perp}(e^{-i x_\perp\cdot\xi}e^{-i\int_\R\omega\cdot \widetilde{A}(t,x_\perp+s\omega)\rd s})\RC \LC 1-e^{i\int_\R \omega\cdot\widetilde{A}(t,x_\perp+s\omega)\rd s}\RC \rd x_\perp  \\
=\, &i\int_{\omega^\perp}   e^{-i x_\perp\cdot\xi} \eta\cdot\nabla_\perp \LC \int_\R \omega\cdot\widetilde{A}(t,x_\perp+s\omega)\,ds \RC \rd x_\perp.
\end{align*}
Thus, \eqref{ID tilde A} becomes
\begin{align*}
0&= \int_\R \zeta^2(t) e^{-it\tau}\int_{\omega^\perp}   e^{-i x_\perp\cdot\xi} \eta\cdot\nabla_\perp \LC \int_\R \omega\cdot\widetilde{A}(t,x_\perp+s\omega)\,ds \RC \rd x_\perp \rd t\\
&= \int_\R \zeta^2(t) e^{-it\tau} \int_{\R^n} e^{-i x\cdot\xi} \eta\cdot \nabla (\omega\cdot \widetilde{A})(t,x)\rd x \rd t\\
&= i|\xi| \omega\cdot \widehat{\zeta^2 \widetilde{A}}(\tau,\xi),
\end{align*}
by taking $\eta={\xi\over |\xi|}$.
Here $\widehat f(\tau,\xi) : = \int_{\R^{n+1}}e^{-i(t\tau+x\cdot\xi)}f(t,x) \rd x \rd t$ is the Fourier transform of $f$. Let $e_\ell\in \mathbb{S}^{n-1}$, $\ell=1,\cdots, n-1$, such that $\{{\xi\over |\xi|}, e_1, \cdots, e_{n-1}\}$ form an orthonormal basis for $\R^n$. By using $\widehat{\zeta^2\widetilde A}(\tau,\xi)\cdot \xi=0$ due to the hypothesis $\nabla\cdot\widetilde A = 0$ in Theorem~\ref{thm_main}, we may write 
\[
\widehat{\zeta^2\widetilde A}(\tau,\xi) = \LC \widehat{\zeta^2\widetilde A}(\tau,\xi)\cdot{\xi\over |\xi|}\RC {\xi\over |\xi|}+ \sum_{\ell=1}^{n-1}\LC \widehat{\zeta^2\widetilde A}(\tau,\xi)\cdot e_{\ell}\RC e_\ell=\sum_{\ell=1}^{n-1}\LC \widehat{\zeta^2\widetilde A}(\tau,\xi)\cdot e_{\ell}\RC e_\ell.
\]
Combining with the above fact $\omega\cdot \widehat{\zeta^2 \widetilde{A}}(\tau,\xi)=0$ with $\omega = e_\ell$, $\ell = 1,\cdots, n-1$, it implies $\widehat{\zeta^2\widetilde A}(\tau,\xi)= 0$.  
Therefore, we obtain $\zeta^2\widetilde A= 0$ using the injectivity of the Fourier transform. By taking $h$ in $\zeta(t)$ small enough, we have $\widetilde{A}\equiv 0$ in $Q$.

To recover $q$, first we apply $\widetilde A = 0$ to \eqref{eq_int_id_par_A_q} and get
\[
0 = \int_Q \widetilde q v_1 \overline{v_0} \rd x \rd t.
\]
We then substitute GO solutions \eqref{recover A GO} into the above equation so that only the leading order term remains as $\rho\rightarrow \infty$, namely,
\[
0 = \int_Q \widetilde q V^{(1)}_0\overline{V^{(0)}_0}\rd x \rd t.
\]
Since $\widetilde q$ plays the same role as $\omega\cdot\widetilde{A}$ in the proof of Theorem~\ref{thm:recover A}, by following a similar argument, one can derive that $\widehat{\zeta^2 \widetilde{q}}=0$. This implies $\widetilde{q}\equiv 0$, which completes the proof.
\end{proof}

\section{Recovery of the nonlinear coefficient}\label{sec:nonlinear}
The main focus of this section is to recover the nonlinear coefficient from the DN map. In the previous section, we have proved the unique determination of the linear coefficients and therefore we denote
$$
A:=A_1=A_2,\quad q:=q_1=q_2.
$$
We recover $B_{\sigma\beta}$ inductively.

\subsection{Recovery of $B_{\sigma\beta}$ with $\sigma+\beta=2$}
We first perform second-order linearization by 
taking the boundary data $\sum_{\ell=1}^{2}\varepsilon_\ell f_\ell$, where $f_\ell\in \widetilde{\mathcal{H}}^{2\kappa + \frac{3}{2}}_0(\Sigma)$ and $|\varepsilon_\ell|<1$ for $\ell = 1,2$. Denote $\varepsilon = (\varepsilon_1,\varepsilon_{2})$. With sufficiently small $|\varepsilon|$, by Proposition \ref{prop_nonlinear_wellposed}, there exists a unique solution $u_j \in \mathcal{H}_0^{2\kappa}(Q)$, $j= 1,2$
to the following IBVP:
\begin{align}\label{eq_higher_linearization_IBVP 0}
\left\{\begin{array}{rrll}
(i\p_t+\Delta_A+q) u_j&=& N_j(t,x,u_j,\overline{u_j}) &\hbox{ in }Q,\\
u_j&=& \sum_{\ell=1}^{2}\varepsilon_\ell f_\ell &\hbox{ on } \Sigma,\\
u_j&=& 0 &\hbox{ on } \{t=0\}\times \Omega,
\end{array}  \right.  
\end{align}
such that $\|u_j \|_{H^{2\kappa}(Q)}\lesssim \sum_{\ell=1}^{2}\|\varepsilon_\ell f_\ell\|_{ H^{2\kappa+\frac{3}{2}}(\Sigma)}$.

By Proposition \ref{prop_linear_boundary}, there exists a unique solution $v_{\ell}\in \mathcal{H}_0^{2\kappa}(Q)$ to the IBVP:
\begin{align}\label{eq_higher_linearization_IBVP_v}
\left\{\begin{array}{rrll}
(i\p_t+\Delta_A+q)v_{\ell} &=& 0 &\hbox{ in }Q,\\
v_{\ell} &=&  f_\ell &\hbox{ on } \Sigma,\\
v_{\ell} &=& 0 &\hbox{ on } \{t=0\}\times \Omega,
\end{array}  \right.
\end{align}
satisfying $\|v_{\ell}\|_{H^{2\kappa}(Q)}\lesssim \|{f}_\ell\|_{ H^{2\kappa+\frac{3}{2}}(\Sigma)}$. 
Recall that $\mathcal{H}_0^{2\kappa}(Q)$ is a Banach algebra since $\kappa>{n+1\over 2}$ 
and, therefore, 
\[ 
\mathcal{N}_j:= 
2B_{j,20}v_1v_2+ B_{j,11}(v_1\overline{v_2}+\overline{v_1}v_2) \in \mathcal{H}^{2\kappa}_0(Q).
\]
By Proposition \ref{prop_linear_est_higher}, there exists a unique solution $w_j\in \mathcal{H}_0^{2\kappa}(Q)$ to the IBVP:
\begin{equation}\label{eq_higher_linearization_IBVP_w}
\begin{aligned}
\left\{\begin{array}{rrll}
(i\p_t+\Delta_A+q) w_j &=& \mathcal{N}_j  &\hbox{ in }Q,\\
w_j &=& 0 &\hbox{ on } \Sigma,\\
w_j &=& 0 &\hbox{ on } \{t=0\}\times \Omega,
\end{array}  \right.
\end{aligned}
\end{equation}
satisfying $\|w_j\|_{H^{2\kappa}(Q)}\lesssim \max\{\|B_{j,20}\|_{H^{2\kappa}(Q)},\|B_{j,11}\|_{H^{2\kappa}(Q)}\}\prod_{\ell=1}^{2} \|v_{\ell}\|_{H^{2\kappa}(Q)}$.

We now state the second-order linearization lemma.

\begin{lemma}\label{lemma_higher_linerization}
Let $u_j$, $v_{\ell}$, and $w_j$ be defined as above. Then
\begin{align}\label{eq_higher_linearization_v}
\p_{\varepsilon_\ell} u_j |_{\varepsilon = 0} = v_{\ell},\quad \hbox{in} \quad H^{2\kappa}(Q), 
\end{align}
for $\ell= 1,2$ and $j=1,2$,
and
\begin{align}\label{eq_higher_linearization_w}
\p_{\varepsilon_1}\p_{\varepsilon_2}u_j|_{\varepsilon=0} = w_j, \quad \hbox{in} \quad H^{2\kappa}(Q). 
\end{align}
\end{lemma}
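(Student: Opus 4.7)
The plan is to mimic and extend the first-order linearization argument from Lemma~\ref{lemma_first_linerization}, leveraging the smoothness of the solution map $f\mapsto u_j$ established in Proposition~\ref{prop_nonlinear_wellposed} (through the implicit function theorem). That proposition ensures the map $\varepsilon\mapsto u_j(\cdot;\varepsilon)$ from a neighborhood of the origin in $\R^2$ into $\mathcal{H}^{2\kappa}_0(Q)$ is smooth, so the partial derivatives $\p_{\varepsilon_\ell}u_j|_{\varepsilon=0}$ and $\p_{\varepsilon_1}\p_{\varepsilon_2}u_j|_{\varepsilon=0}$ exist in $H^{2\kappa}(Q)$; the task is simply to identify them.

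For \eqref{eq_higher_linearization_v}, I would set $U_j^{(\ell)}:=\varepsilon_\ell^{-1}u_j|_{\varepsilon_{3-\ell}=0}$ and argue exactly as in Lemma~\ref{lemma_first_linerization}. The difference $U_j^{(\ell)}-v_\ell\in \mathcal{H}^{2\kappa}_0(Q)$ solves the linear IBVP with right-hand side $\varepsilon_\ell^{-1}N_j(t,x,u_j,\overline{u_j})|_{\varepsilon_{3-\ell}=0}$ and trivial data. Since $N_j$ is a polynomial with lowest degree two in $(u,\overline u)$ and $\|u_j|_{\varepsilon_{3-\ell}=0}\|_{H^{2\kappa}(Q)}\lesssim |\varepsilon_\ell|\,\|f_\ell\|_{H^{2\kappa+3/2}(\Sigma)}$ by Proposition~\ref{prop_nonlinear_wellposed}, the Banach algebra property of $H^{2\kappa}(Q)$ gives $\|\varepsilon_\ell^{-1}N_j(u_j,\overline{u_j})\|_{H^{2\kappa}(Q)}=O(|\varepsilon_\ell|)$. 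Proposition~\ref{prop_linear_est_higher} then forces $\|U_j^{(\ell)}-v_\ell\|_{H^{2\kappa}(Q)}\to 0$ as $\varepsilon_\ell\to 0$, yielding \eqref{eq_higher_linearization_v}.

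For \eqref{eq_higher_linearization_w}, by the established smoothness I would Taylor-expand
\[
u_j(\varepsilon)=\varepsilon_1 v_1+\varepsilon_2 v_2+\tfrac{1}{2}\sum_{k,\ell=1}^{2}\varepsilon_k\varepsilon_\ell\, U^{(k\ell)}_j+O(|\varepsilon|^3)\quad\text{in }H^{2\kappa}(Q),
\]
where $U^{(k\ell)}_j=\p_{\varepsilon_k}\p_{\varepsilon_\ell}u_j|_{\varepsilon=0}$. Substituting into \eqref{eq_higher_linearization_IBVP 0} and using the Banach algebra structure to expand
\[
u_j^\sigma\overline{u_j}^\beta=\varepsilon_1^\sigma\varepsilon_1^\beta v_1^\sigma\overline{v_1}^\beta+\cdots,
\]
I would collect the coefficient of $\varepsilon_1\varepsilon_2$ on both sides. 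Only the terms with $\sigma+\beta=2$ contribute, giving
\[
\p_{\varepsilon_1}\p_{\varepsilon_2}\!\left(B_{j,20}u_j^2+B_{j,11}u_j\overline{u_j}\right)\Big|_{\varepsilon=0}=2B_{j,20}v_1v_2+B_{j,11}(v_1\overline{v_2}+\overline{v_1}v_2)=\mathcal{N}_j,
\]
while all $\sigma+\beta\geq 3$ contributions are $O(|\varepsilon|^3)$ and drop out. Hence $U_j^{(12)}$ solves the same IBVP \eqref{eq_higher_linearization_IBVP_w} as $w_j$, with trivial boundary and initial data; uniqueness from Proposition~\ref{prop_linear_est_higher} then yields $U_j^{(12)}=w_j$ in $H^{2\kappa}(Q)$.

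The only delicate point is the justification of the second-order Taylor expansion in the Banach space $H^{2\kappa}(Q)$ and the termwise differentiation of the nonlinearity; this is handled cleanly by the $C^\infty$ regularity of the solution map from Proposition~\ref{prop_nonlinear_wellposed}, together with the Banach algebra property $\kappa>\tfrac{n+1}{2}$. Alternatively, one can avoid Taylor's theorem entirely and repeat the difference-quotient argument of Lemma~\ref{lemma_first_linerization} twice: first show $\varepsilon_2^{-1}(u_j(\varepsilon_1,\varepsilon_2)-u_j(\varepsilon_1,0))\to\p_{\varepsilon_2}u_j(\varepsilon_1,0)$ in $H^{2\kappa}(Q)$, and then differentiate in $\varepsilon_1$ at $\varepsilon_1=0$, keeping track of the quadratic contribution of $N_j$ as above.
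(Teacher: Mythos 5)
Your proposal is correct and follows essentially the same route as the paper: the first-order identities are obtained by repeating the difference-quotient argument of Lemma~\ref{lemma_first_linerization}, and the mixed derivative is identified by differentiating the IBVP \eqref{eq_higher_linearization_IBVP 0} twice (justified by the smoothness of the solution map from Proposition~\ref{prop_nonlinear_wellposed}), noting that $u_j|_{\varepsilon=0}=0$ forces $\p_{\varepsilon_1}\p_{\varepsilon_2}N_j|_{\varepsilon=0}=\mathcal{N}_j$ with only the $\sigma+\beta=2$ terms surviving, and concluding by uniqueness of the linear IBVP. Your Taylor-expansion phrasing is just a repackaging of the paper's direct differentiation of the equation, so there is no substantive difference.
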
 
\begin{proof}
Following the same argument as in the proof of Lemma \ref{lemma_first_linerization} gives \eqref{eq_higher_linearization_v}. Hence, it remains to show \eqref{eq_higher_linearization_w}.

Applying $\p_{\varepsilon_1}\p_{\varepsilon_{2}}|_{\varepsilon=0}$ to \eqref{eq_higher_linearization_IBVP 0}, we get
\begin{align*}
\left\{\begin{array}{rrll}
(i\p_t+\Delta_A+q) (\p_{\varepsilon_1}\p_{\varepsilon_{2}}  u_j|_{\varepsilon=0})&=& \p_{\varepsilon_1}\p_{\varepsilon_{2}} N_j(t,x,u_j,\overline{u_j}) |_{\varepsilon=0}
&\hbox{ in }Q,
\\
\p_{\varepsilon_1}\p_{\varepsilon_{2}}  u_j|_{\varepsilon=0}&=& 0&\hbox{ on } \Sigma,
\\
\p_{\varepsilon_1}\p_{\varepsilon_{2}}  u_j|_{\varepsilon=0}&=& 0 &\hbox{ on } \{t=0\}\times \Omega.
\end{array}  \right.  
\end{align*}
Using \eqref{eq_higher_linearization_v} and $u_j=0$ for trivial boundary data, 
we have the following fact 
\begin{align*}
\p_{\varepsilon_1}\p_{\varepsilon_{2}} \LC \sum\limits_{\substack{\sigma\geq 1\\
2\le \sigma+\beta\le M}}
B_{j,\sigma\beta}(t,x)u_j^{\sigma}\overline{u_j}^{\beta} \RC|_{\varepsilon=0}=\mathcal{N}_j.
\end{align*}
Thus, $\p_{\varepsilon_1}\p_{\varepsilon_{2}}  u_j|_{\varepsilon=0}=w_j \in \mathcal{H}^{2\kappa}_0(Q)$ by the well-posedness of \eqref{eq_higher_linearization_IBVP_w}.
\end{proof}

Next, we derive the following integral identity. Recall that $\widetilde{B}_{\sigma\beta}=B_{1,\sigma\beta}-B_{2,\sigma\beta}$ in \eqref{DEF:tilde coefficient}.

\begin{prop}\label{prop_int_id_B}
Let $f_0\in \widetilde{\mathcal{H}}^{2\kappa + \frac{3}{2}}_0(\Sigma)$. If $\Lambda^\sharp_{A_1,q_1,N_1}=\Lambda^\sharp_{A_2,q_2,N_2}$, then
\begin{align}\label{eq_int_id_B}
0 = \bigintsss_Q  \LC 2\widetilde{B}_{20}v_1v_2+ \widetilde{B}_{11}(v_1\overline{v_2}+\overline{v_1}v_2) \RC\overline{v_0} \rd x \rd t,
\end{align}
for any $v_\ell\in H^{2\kappa}(Q)$, $\ell=1,2$, satisfying \eqref{eq_higher_linearization_IBVP_v} and $v_0\in H^{2\kappa}(Q)$ satisfying 
\begin{align}\label{eq_prop61_IBVP}
\left\{\begin{array}{rrll}
(i\p_t + \Delta_{A}  + q) v_0&=& 0 &\hbox{ in }Q,\\
v_0&=& f_0&\hbox{ on } \Sigma,\\
v_0&=& 0 &\hbox{ on } \{t=T\}\times \Omega.
\end{array}  \right.
\end{align}
\end{prop}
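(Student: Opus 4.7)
The plan is to mirror the strategy of Proposition~\ref{prop_int_id_par_A_q}, replacing the first-order linearization there with the second-order linearization provided by Lemma~\ref{lemma_higher_linerization}. Set $f_\varepsilon = \varepsilon_1 f_1 + \varepsilon_2 f_2 \in \mathcal{D}_\delta(\Sigma)$ for $|\varepsilon|$ small, and let $u_{j,\varepsilon}$ be the corresponding solutions to \eqref{eq_higher_linearization_IBVP 0}. Applying $\partial_{\varepsilon_1}\partial_{\varepsilon_2}|_{\varepsilon=0}$ to the identity $\Lambda^\sharp_{A_1,q_1,N_1}(f_\varepsilon) = \Lambda^\sharp_{A_2,q_2,N_2}(f_\varepsilon)$, together with \eqref{eq_higher_linearization_w} and the fact that $A_1 = A_2 = 0$ on $(0,T)\times \mathcal{O} \supset \Sigma^\sharp$, yields
$$
\partial_\nu w_1|_{\Sigma^\sharp} = \partial_\nu w_2|_{\Sigma^\sharp},
$$
where $w_j \in \mathcal{H}_0^{2\kappa}(Q)$ solves \eqref{eq_higher_linearization_IBVP_w}.

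Next, I would set $\widetilde{w} := w_1 - w_2 \in \mathcal{H}_0^{2\kappa}(Q)$, which solves
$$
L_{A,q}\widetilde{w} = \mathcal{N}_1 - \mathcal{N}_2 = 2\widetilde{B}_{20} v_1 v_2 + \widetilde{B}_{11}(v_1 \overline{v_2} + \overline{v_1} v_2) =: \widetilde{F}
$$
in $Q$, with $\widetilde{w} = 0$ on $\Sigma$, $\widetilde{w}(0,\cdot)=0$ in $\Omega$, and $\partial_\nu \widetilde{w} = 0$ on $\Sigma^\sharp$. Since $\widetilde{B}_{\sigma\beta} = 0$ on $(0,T)\times \mathcal{O}$ by hypothesis, the source $\widetilde{F}$ vanishes on $(0,T)\times \mathcal{O}$. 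This is the exact configuration to which the unique continuation estimate \cite[Corollary~1]{Bella-Fraj2020} applies, so that (as in \eqref{prop_71_unique_continuation}) $\widetilde{w} = 0$ a.e.\ in $(0,T)\times (\Omega_3 \setminus \Omega_2)$, with the nested neighborhoods $\Omega_j$ chosen exactly as in Proposition~\ref{prop_int_id_par_A_q}.

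With this vanishing at hand, I would introduce the same cutoff $\chi \in C^\infty(\overline{\Omega})$ from \eqref{eq_def_cutoff_chi} and set $w^\sharp := \chi \widetilde{w}$. Because $\widetilde{F}$ is supported where $\chi \equiv 1$, we have $\chi \widetilde{F} = \widetilde{F}$, so $w^\sharp$ satisfies
$$
L_{A,q} w^\sharp = \widetilde{F} + [\Delta,\chi]\widetilde{w} \quad \text{in } Q, \qquad w^\sharp|_\Sigma = 0, \qquad w^\sharp(0,\cdot) = 0 \text{ in } \Omega,
$$
and moreover $\partial_\nu w^\sharp|_\Sigma = 0$ since $w^\sharp \equiv 0$ near $\Sigma$. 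The commutator $[\Delta,\chi]\widetilde{w}$ is supported in $\Omega_3 \setminus \Omega_2$, where $\widetilde{w}$ vanishes by unique continuation, so it is identically zero. Pairing the equation with the adjoint solution $\overline{v_0}$ (which solves \eqref{eq_prop61_IBVP} and thus is terminated at $t=T$), integrating over $Q$, and using integration by parts — the boundary terms at $t=0$, $t=T$, and on $\Sigma$ all vanish thanks to the trivial Cauchy and Dirichlet data of $w^\sharp$ and $v_0$ — yields the desired identity \eqref{eq_int_id_B}.

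The only delicate point is handling the partial nature of the Neumann data: we only know $\partial_\nu\widetilde{w} = 0$ on $\Sigma^\sharp$, not on all of $\Sigma$. This is precisely what the unique continuation step plus cutoff argument resolves, by trading the missing Neumann information for vanishing of $\widetilde{w}$ in a collar of $\partial\Omega$; the rest is a routine duality computation in view of the Banach algebra property of $\mathcal{H}_0^{2\kappa}(Q)$ and the well-posedness of the adjoint problem (Remark~\ref{remark_adjoint_boundary}).
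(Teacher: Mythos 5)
Your proposal is correct and follows essentially the same route as the paper: second-order linearization of the DN map to obtain $\p_\nu w_1=\p_\nu w_2$ on $\Sigma^\sharp$, the unique continuation estimate of \cite[Corollary 1]{Bella-Fraj2020} applied to $\widetilde{w}=w_1-w_2$ (using that the source $\widetilde{\mathcal{N}}$ vanishes on $(0,T)\times\mathcal{O}$), and the cutoff $w^\sharp=\chi\widetilde{w}$ whose commutator term $[\Delta,\chi]\widetilde{w}$ dies on $\Omega_3\setminus\Omega_2$, followed by pairing with the adjoint solution $\overline{v_0}$. The only difference is cosmetic: you spell out the differentiation of the DN-map identity that the paper leaves implicit.
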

\begin{proof}
Let $w_j$ be defined as above and denote $\widetilde{w} :=w_1-w_2$, which then satisfies
\begin{align}\label{eq_higher_linearization_IBVP 1}
\left\{\begin{array}{rrll}
(i\p_t+\Delta_A+q) \widetilde{w} &=& \widetilde{\mathcal{N}}&\hbox{ in }Q,\\
\widetilde{w}  &=& 0&\hbox{ on } \Sigma,\\
\widetilde{w}   &=& 0 &\hbox{ on } \{t=0\}\times \Omega,
\end{array}  \right.
\end{align}
where
\[
\widetilde{\mathcal{N}}:=2\widetilde{B}_{20}v_1v_2+ \widetilde{B}_{11}(v_1\overline{v_2}+\overline{v_1}v_2) .
\]
From  $\Lambda^\sharp_{A_1,q_1,N_1}=\Lambda^\sharp_{A_2,q_2,N_2}$ on $\mathcal{D}_\delta (\Sigma)$, we can derive
$$
\p_\nu w_1|_{\Sigma^\sharp} = \p_\nu w_2|_{\Sigma^\sharp}.
$$
Also, since $v_\ell\in \mathcal{H}^{2\kappa}_0(Q)$ and $\widetilde{B}_{\sigma\beta}$ is smooth, the function $\widetilde{\mathcal{N}} \in \mathcal{H}^{2\kappa}_0(Q)$ 
and $\widetilde{\mathcal{N}}= 0$ on $(0,T)\times\mathcal{O}$.
By unique continuation estimate in \cite[Corollary 1]{Bella-Fraj2020}, we get 
\begin{align}\label{eq_unique_conti_w}
\|\widetilde{w}\|_{L^2((0,T)\times (\Omega_3\setminus\Omega_2))} = 0.
\end{align}

Following Proposition~\ref{prop_int_id_par_A_q}, we take $\chi$ defined by \eqref{eq_def_cutoff_chi} and set 
$w^\sharp=\chi w\in \mathcal{H}^{2\kappa}_0(Q)$, which solves the following IBVP:
\begin{align}\label{eq_ajoint_tilde_v}
\left\{\begin{array}{rrll}
(i\p_t+\Delta_A+q) w^\sharp &=& \widetilde{\mathcal{N}}+ [\Delta,\chi]\widetilde{w}  & \hbox{ in }Q,\\
w^\sharp &=&  0 &\hbox{ on } \Sigma,\\
w^\sharp &=& 0 &\hbox{ on } \{t=0\}\times \Omega.
\end{array} \right.  
\end{align}
Multiplying \eqref{eq_ajoint_tilde_v} by $\overline{v_0}$, integrating over $Q$ and performing integration by parts, we obtain 
\[
0 = \int_Q \LC \widetilde{\mathcal{N}}+ [\Delta,\chi]\widetilde{w} \RC \overline{v_0} \rd x \rd t.
\]
Here we have used that $w^\sharp(0,\cdot) = v_0(T,\cdot)=0$ in $\Omega$ and $v^\sharp|_{\Sigma}=\p_\nu v^\sharp|_{\Sigma}=0$, thanks to the definition of $\chi$. Together with \eqref{eq_unique_conti_w}, the equation above implies \eqref{eq_int_id_B} by arguing similarly as in Proposition \ref{prop_int_id_par_A_q}. 
\end{proof}

Now we are ready to recover $B_{\sigma\beta}$ by the following density argument.

\begin{prop}\label{recoverB_2}
If \eqref{eq_int_id_B} holds 
for any $v_\ell\in H^{2\kappa}(Q)$, $\ell=1,2$, satisfying \eqref{eq_higher_linearization_IBVP_v} and $v_0\in H^{2\kappa}(Q)$ satisfying \eqref{eq_prop61_IBVP}, 
then 
\[
\widetilde{B}_{20}=\widetilde{B}_{11}=0 \quad \hbox{in}\quad Q.
\]
\end{prop}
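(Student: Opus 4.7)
The strategy is to substitute the concentrated GO solutions of Proposition~\ref{prop:GO_Concentrated} into the integral identity \eqref{eq_int_id_B}, choosing phase vectors so that exactly one of the three tri-products $v_1 v_2\overline{v_0}$, $v_1\overline{v_2}\,\overline{v_0}$, and $\overline{v_1}v_2\overline{v_0}$ becomes non-oscillatory while the other two oscillate at rate $\rho^2$ in $t$ and vanish as $\rho\to\infty$. Since \eqref{eq_int_id_B} holds for every admissible triple, two different phase configurations will separately isolate $\widetilde B_{20}$ and $\widetilde B_{11}$.

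Fix a target point $(t^*,x^*)\in Q$. To recover $\widetilde B_{20}$, I would pick nonzero orthogonal vectors $\omega_1,\omega_2\in\R^n$ and set $\omega_0:=\omega_1+\omega_2$; then $\omega_1+\omega_2-\omega_0=0$ and $|\omega_1|^2+|\omega_2|^2-|\omega_0|^2=-2\omega_1\cdot\omega_2=0$, so the phase of $v_1 v_2\overline{v_0}$ vanishes identically. Take $v_1,v_2,v_0$ to be the concentrated GO solutions of Proposition~\ref{prop:GO_Concentrated} built around the lines $\gamma_{x^*,\omega_\ell}$ with a common time cutoff $\iota\in C^\infty_0(0,T)$ localized near $t^*$ and transverse width $\delta>0$. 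The effective phases of the remaining two tri-products reduce to $-2\rho\omega_2\cdot x+2\rho^2|\omega_2|^2 t$ and $-2\rho\omega_1\cdot x+2\rho^2|\omega_1|^2 t$, whose $t$-derivatives have magnitude $\rho^2$; one integration by parts in $t$ (using $\iota\in C^\infty_0(0,T)$) produces a factor $\rho^{-2}$, while the subleading amplitudes $\rho^{-k}V_k^{(\cdot)}$ and the remainders $R_\rho^{(\cdot)}$ are controlled via \eqref{EST:ak rho con}--\eqref{EST:r rho con} combined with Sobolev embedding, after choosing $N$ large relative to $m\ge 2\kappa+2$. Passing to the limit $\rho\to\infty$ leaves
\[
\int_Q 2\widetilde B_{20}\,V_0^{(1)}V_0^{(2)}\,\overline{V_0^{(0)}}\rd x\rd t=0.
\]
The integrand equals $\iota(t)^3$ times a nowhere-vanishing magnetic factor (an exponential of line integrals of $A$) times the transverse cutoff product $\prod_{\ell,a}\chi((x-x^*)\cdot\alpha_a^{(\ell)}/\delta)$; rescaling $x=x^*+\delta y$ in the transverse directions and sending $\delta\to 0$ localizes the integral to $x=x^*$, so varying $\iota$ near $t^*$ and then $(t^*,x^*)$ over $Q$ forces $\widetilde B_{20}\equiv 0$ in $Q$.

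For $\widetilde B_{11}$ I would rerun the scheme with a swapped phase configuration: pick nonzero orthogonal $\omega_2,\omega_0\in\R^n$ and set $\omega_1:=\omega_2+\omega_0$, so that the phase of $v_1\overline{v_2}\,\overline{v_0}$ vanishes identically while the phases of $v_1 v_2\overline{v_0}$ and $\overline{v_1}v_2\overline{v_0}$ acquire $t$-gradients of size $\rho^2$. The $\widetilde B_{20}$ contribution in \eqref{eq_int_id_B} has already been killed by the previous step, and the same integration-by-parts argument eliminates the $\overline{v_1}v_2\overline{v_0}$ piece; the same $\delta\to 0$ localization then gives $\widetilde B_{11}\equiv 0$. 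The main technical obstacle is the non-stationary-phase bookkeeping: one has to check that the cross interactions between the amplitudes $V_k^{(\cdot)}$ and the remainders $R_\rho^{(\cdot)}$ against the oscillating exponentials all vanish as $\rho\to\infty$ uniformly in $\delta$, so that the limits $\rho\to\infty$ and then $\delta\to 0$ can be taken consecutively without interference, and that the magnetic factor $W$ does not degenerate in the final localization at $x^*$.
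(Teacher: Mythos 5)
Your proposal is correct and follows essentially the same route as the paper: substitute the concentrated GO solutions into \eqref{eq_int_id_B}, choose $\omega_1\perp\omega_2$, $\omega_0=\omega_1+\omega_2$ (resp.\ a conjugate configuration) so that exactly one tri-product has vanishing phase, kill the oscillating terms by one non-stationary-phase integration by parts, and localize by sending $\delta\to 0$ after $\rho\to\infty$. The only (immaterial) differences are that you integrate by parts in $t$ alone, exploiting the nonzero $O(\rho^2)$ time frequency, whereas the paper uses the full space-time gradient and settles for $O(\rho^{-1})$, and that your $\widetilde B_{11}$ configuration isolates $v_1\overline{v_2}\,\overline{v_0}$ rather than $\overline{v_1}v_2\overline{v_0}$.
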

\begin{proof}
Fix $x_0\in \Omega$. Following Section \ref{sec:go concentrated}, for $m\ge2\kappa+2$ and $N>2m$, we take the GO solutions $v_\ell$ to \eqref{eq_higher_linearization_IBVP_v}, $\ell=1,2$, and $v_0$ to the adjoint \eqref{eq_prop61_IBVP}: 
\begin{align}\label{eq_6_go}
v_\ell(t,x) = e^{i\Phi_\ell(t,x)}\LC V^{(\ell)}_0+\sum_{k=1}^N \rho^{-k} V^{(\ell)}_{k}\RC+ R^{(\ell)}_{\rho}, 
\quad 
v_0(t,x) = e^{i\Phi_0(t,x)}\LC V^{(0)}_0+\sum_{k=0}^N \rho^{-k} V^{(0)}_{k}\RC+ R^{(0)}_{\rho}
\end{align}
with $v_\ell(0,\cdot)=v_0(T,\cdot)=0$ in $\Omega$. Here the phase functions
\begin{equation}\label{eq_prop52_Phi}
\Phi_\ell(t,x) = \rho({x\cdot\omega_\ell} - \rho |\omega_\ell|^2t), \quad \ell = 0,1,2,
\end{equation}
for some $0\neq\omega_\ell\in\R^n$.
Their amplitudes ${V^{(\ell)}_0},\,V^{(\ell)}_k$, are as in \eqref{eq_amplitude_v_0_con} and \eqref{eq_amplitude_v_l_non} corresponding to $\omega_\ell$. In particular, their leading terms are
\begin{equation}\label{eq_prop52_amplitude}
V_0^{(\ell)} = \iota (t) \prod_{a=1}^{n-1}\chi\LC \frac{(x-x_0)\cdot\alpha^{(\ell)}_a}{\delta} \RC e^{i\int_0^\infty A(t,x+s\omega_\ell)\cdot\omega_\ell\rd s},\quad \ell = 0,1,2,
\end{equation}
where $\alpha^{(\ell)}_a\in\R^n$ are chosen such that $
\{ {\omega_\ell\over |\omega_\ell|}, \alpha_1^{(\ell)},\cdots, \alpha_{n-1}^{(\ell)}\}
$
is an orthonormal basis of $\R^n$.

We now insert the GO solutions, given by \eqref{eq_6_go}, into the integral identity \eqref{eq_int_id_B}.
With $m\geq 2\kappa +2$ and $N>2m$, by using $V_k^{(\ell)}=O(1)$ and $R_\rho^{(\ell)} =O(\rho^{-N+2m})$ in $H^m$ norm as $\rho\rightarrow\infty$, 
we reduce the integral identity \eqref{eq_int_id_B} to 
\begin{multline}\label{ID:B2}
0 = \bigintsss_Q  2\widetilde{B}_{20}
e^{i(\Phi_1+\Phi_2-\Phi_0)}
V_0^{(1)} V_0^{(2)}\overline{V_0^{(0)}}
\\
+
\widetilde{B}_{11}\LC
e^{i(-\Phi_1+\Phi_2-\Phi_0)}
\overline{V_0^{(1)}} V_0^{(2)}\overline{V_0^{(0)}}+e^{i(\Phi_1-\Phi_2-\Phi_0)}
{V_0^{(1)}} \overline{V_0^{(2)}}\overline{V_0^{(0)}}\RC
\rd x\rd t 
+O(\rho^{-1}).
\end{multline}
Here we have used Sobolev embedding $H^m(Q)\subset C^1({\overline{Q}})$ when $m>\frac{ n+1 }{2}+1$ and the fact that all terms that contain 
$R^{(\ell)}_\rho$ are at most of order $O(\rho^{-1})$.
For instance, for all $k_1,k_2\ge 0$,
\[
\LV \bigintsss_Q  2\widetilde{B}_{20}e^{i(\Phi_1+\Phi_2-\Phi_0)}V^{(1)}_{k_1}V^{(2)}_{k_2}\overline{R^{(0)}_\rho}\rd x\rd t\RV \lesssim\|\widetilde{B}_{20}\|_{C(Q)} \|V^{(1)}_{k_1}\|_{C({\overline{Q}})} \|V^{(2)}_{k_2}\|_{C({\overline{Q}})} \|R^{(0)}_{\rho}\|_{C({\overline{Q}})} \lesssim O(\rho^{-1})
\]
since 
$\|V^{(\ell)}_k\|_{C({\overline{Q}})}\le\|V^{(\ell)}_k\|_{H^m(Q)} =O(1)$ and $\|R^{(\ell)}_\rho\|_{C({\overline{Q}})}\le \|R^{(\ell)}_\rho\|_{H^m(Q)} = O(\rho^{-N+2m})\le O(\rho^{-1})$.

To recover $\widetilde{B}_{20}$, we take $\omega_\ell$ such that 
\begin{align*}
\omega_1+\omega_2 &= \omega_0,
\\
|\omega_1|^2+|\omega_2|^2 &=|\omega_0|^2,
\end{align*}
which can be fulfilled, for instance, if we take $\omega_1$ and $\omega_2$ perpendicular.
With this choice of $\omega_\ell$, $\ell=0,\,1,\,2$, the exponent in $e^{i(\Phi_1+\Phi_2-\Phi_0)}$ vanishes, and the exponents in $e^{i(-\Phi_1+\Phi_2-\Phi_0)}$ and $e^{i(\Phi_1-\Phi_2-\Phi_0)}$ don't vanish. To see this, if $-\Phi_1+\Phi_2-\Phi_0=0$ holds, we then get $\omega_1=0$, which is a contradiction.
If $\Phi_1-\Phi_2+\Phi_0=0$ holds, we then get $\omega_1=0$, which is also a contradiction.

Moreover, we observe that those integrals in \eqref{ID:B2} with nontrivial exponent are of the order $O(\rho^{-1} )$. 
It is sufficient to show for instance,
\[
\int_Q \widetilde{B}_{11}
e^{i(-\Phi_1+\Phi_2-\Phi_0)}
\overline{V_0^{(1)}} V_0^{(2)}\overline{V_0^{(0)}}
\rd x\rd t = O(\rho^{-1}).
\]
Let us denote $f: = \widetilde{B}_{11}
\overline{V_0^{(1)}} V_0^{(2)}\overline{V_0^{(0)}}\in C^1({\overline{Q}})$ and $\Phi: = {-\Phi_1+\Phi_2-\Phi_0}$ with 
$$
\nabla'\Phi = \rho\LC\rho(|\omega_1|^2-|\omega_2|^2+|\omega _3|^2),(-\omega_1+\omega _2-\omega_3)\RC\neq (0,0),
$$
where $\nabla' := (\p_t,\nabla_x)$. 
Since it is possible that the first component of $\nabla'\Phi$ vanishes, we only have
$|\nabla' \Phi|=O( \rho )$. Note that $e^{i\Phi} = \frac{-1}{|\nabla'\Phi|^2}\LA i\nabla'\Phi, \nabla' e^{i\Phi}\RA$. Since $f$ is compactly supported in $Q$, we now use integration by parts and get  
\[
\LV
\int_Q fe^{i\Phi}\rd x \rd t \RV 
=\LV \int_Q f\frac{1}{|\nabla'\Phi|^2}\LA i\nabla'\Phi,\nabla' e^{i\Phi}\RA \rd x \rd t \RV
= \LV \int_Q e^{i\Phi} \frac{1}{|\nabla'\Phi|^2}\LA i\nabla'\Phi, \nabla' f \RA \rd x \rd t \RV
=O(\rho^{-1}).
\] 
Similar arguments can be applied to prove the remaining terms in \eqref{ID:B2} with nontrivial exponent are of order $O(\rho^{-1})$.

Therefore, by sending $\rho\rightarrow\infty$ in \eqref{ID:B2}, we arrive at
\[
0 = \bigintsss_Q  \widetilde{B}_{20}
V_0^{(1)} V_0^{(2)}\overline{V_0^{(0)}} \rd x\rd t.
\]
For each $t_0\in (0,T)$, since $\iota (t)\in C^\infty_0(0,T)$ in the definition of $V_0^{(\ell)}$ is arbitrary, one can further deduce that %it follows that the DN map determines 
\[
0 = \bigintsss_\Omega  \widetilde{B}_{20}(t_0,\cdot)
V_0^{(1)} V_0^{(2)}\overline{V_0^{(0)}}(t_0,\cdot) \rd x,
\]
with $V^{(\ell)}_0(t_0,x_0)\neq0$. 
Note that based on $V_0^{(\ell)}$, the integrand is localized in a $\delta$-neighborhood of $x_0$, we get by multiplying the above equation by $\delta^{-n}$ and sending $\delta\to 0$ that 
\[
\widetilde{B}_{20}(t_0,x_0)=0.
\]

To recover $\widetilde{B}_{11}$, we then take $\omega_\ell$ such that 
\begin{align*}
-\omega_1+\omega_2 &= \omega_0,
\\
-|\omega_1|^2+|\omega_2|^2 &=|\omega_0|^2.
\end{align*} 
Following a similar argument as above, we can derive $\widetilde{B}_{11}(t_0,x_0)=0$.
Since $(t_0, x_0)$ is arbitrary, we get $\widetilde B_{20}=\widetilde B_{11}= 0$ in $Q$.
\end{proof}

\subsection{Recovery of $B_{\sigma\beta}$ with $3\le\sigma+\beta\le  M$}

We will recover 
$\widetilde{B}_{\sigma\beta}$, $\sigma+\beta=m$ inductively based on $m$. The case $m=2$ has been settled. Assume that the following holds:
\begin{equation}\label{eq_5_induction}
\widetilde{B}_{\sigma\beta}=0,\quad 2\le \sigma+\beta\le m-1.
\end{equation}
We now prove that $\widetilde{B}_{\sigma\beta}=0$, for $\sigma+\beta= m$.

Let $f_\ell\in \widetilde{\mathcal{H}}^{2\kappa + \frac{3}{2}}_0(\Sigma)$ and $|\varepsilon_\ell|<1$ for $\ell = 1,\cdots, m$. 
Denote $\varepsilon = (\varepsilon_1,\cdots, \varepsilon_{m})$. With sufficiently small $|\varepsilon|$, by Proposition \ref{prop_nonlinear_wellposed}, there exists a unique solution $u_j \in \mathcal{H}_0^{2\kappa}(Q)$, $j= 1,\,2$, 
to the following IBVP:
\begin{align}\label{eq_higher_linearization_IBVP_m}
\left\{\begin{array}{rrll}
(i\p_t+\Delta_A+q) u_j&=& N_j(t,x,u_j,\overline{u_j}) &\hbox{ in }Q,\\
u_j&=& \sum_{\ell=1}^{m}\varepsilon_\ell f_\ell &\hbox{ on } \Sigma,\\
u_j&=& 0 &\hbox{ on } \{t=0\}\times \Omega,
\end{array}  \right.  
\end{align}
such that $\|u_j \|_{H^{2\kappa}(Q)}\lesssim \sum_{\ell=1}^{m}\|\varepsilon_\ell f_\ell\|_{ H^{2\kappa+\frac{3}{2}}(\Sigma)}$.

{
To proceed, let us first show by induction within induction, call it subduction, that for any $1\le k\le m-1$,   
\begin{equation}\label{eq_5_subinduction_1}
\p_{\varepsilon_{\ell_1}}\cdots\p_{\varepsilon_{\ell_k}}u_1|_{\varepsilon=0} = \p_{\varepsilon_{\ell_1}}\cdots\p_{\varepsilon_{\ell_k}}u_2|_{\varepsilon=0}.
\end{equation}
Here we take $\ell_i\neq\ell_j$ and $\ell_1,\,\cdots,\, \ell_k\in\{1,\,2,\,\cdots,\,m\}$. 
The case $k=1$ is settled by following the same argument as in Lemme \ref{lemma_higher_linerization}, which uses 
Proposition \ref{prop_linear_boundary} and $u_j = 0$ for trivial boundary data.
Thus, we have  
$$
v_\ell := \p_{\varepsilon_\ell}u_1|_{\varepsilon=0}=\p_{\varepsilon_\ell}u_2|_{\varepsilon=0}.
$$
Let us assume that \eqref{eq_5_subinduction_1} holds for $k= 1,\,\cdots,\, K-1$, for some $K\le m-1$. 
By applying $\p_{\varepsilon_{\ell_1}}\cdots\p_{\varepsilon_{\ell_{K}}}$ to \eqref{eq_higher_linearization_IBVP_m} and then evaluating at $\varepsilon=0$, we have 
\begin{equation}\label{eq_5_k_diff}
\left\{\begin{array}{rrll}
(i\p_t+\Delta_A+q) \LC\p_{\varepsilon_{\ell_1}}\cdots\p_{\varepsilon_{\ell_{K}}}u_j|_{\varepsilon=0}\RC &=& \mathcal{N}_{j,K}+ \mathcal{R}_{j,K-1} &\hbox{ in }Q,\\
\p_{\varepsilon_{\ell_1}}\cdots\p_{\varepsilon_{\ell_{K}}}u_j|_{\varepsilon=0}&=& 0 &\hbox{ on } \Sigma,\\
\p_{\varepsilon_{\ell_1}}\cdots\p_{\varepsilon_{\ell_{K}}}u_j|_{\varepsilon=0}&=& 0 &\hbox{ on } \{t=0\}\times \Omega,
\end{array}  \right.     
\end{equation}
where 
\begin{align*}
\mathcal{N}_{j,K}&= 
\p_{\varepsilon_1}\cdots\p_{\varepsilon_{K}}\LC \sum\limits_{\substack{\sigma\ge 1\\
\sigma+\beta=K}}
B_{j,\sigma\beta}(t,x)u_j^{\sigma}\overline{u_j}^{\beta}\RC \Bigg|_{\varepsilon =0}
\end{align*}
and remainder terms
$\mathcal{R}_{j,K-1}$ contains only a polynomial of the terms $A$, $q$, $B_{j, \sigma\beta}$, $\sigma+\beta=k$, and $\p_{\varepsilon_{\ell_1}}\cdots\p_{\varepsilon_{\ell_k}}u_j|_{\varepsilon=0}$ for $k\le K-1$. Here we also used the fact $u_j= 0$ for trivial boundary data. 
By our induction \eqref{eq_5_induction} and \eqref{eq_5_subinduction_1}, 
we can deduce that
$$
\mathcal{R}_{1,K-1} = \mathcal{R}_{2,K-1}\quad   \hbox{and} \quad   \mathcal{N}_{1,K} = \mathcal{N}_{2,K}. 
$$

This then yields that 
\[
\left\{\begin{array}{rrll}
(i\p_t+\Delta_A+q) \LC\p_{\varepsilon_{\ell_1}}\cdots\p_{\varepsilon_{\ell_{K}}}u_1|_{\varepsilon=0}-\p_{\varepsilon_{\ell_1}}\cdots\p_{\varepsilon_{\ell_{K}}}u_2|_{\varepsilon=0}\RC &=& 0&\hbox{ in }Q,\\
\LC\p_{\varepsilon_{\ell_1}}\cdots\p_{\varepsilon_{\ell_{K}}}u_1|_{\varepsilon=0}-\p_{\varepsilon_{\ell_1}}\cdots\p_{\varepsilon_{\ell_{K}}}u_2|_{\varepsilon=0}\RC&=& 0 &\hbox{ on } \Sigma,\\
\LC\p_{\varepsilon_{\ell_1}}\cdots\p_{\varepsilon_{\ell_{K}}}u_1|_{\varepsilon=0}-\p_{\varepsilon_{\ell_1}}\cdots\p_{\varepsilon_{\ell_{K}}}u_2|_{\varepsilon=0}\RC&=& 0 &\hbox{ on } \{t=0\}\times \Omega,
\end{array}  \right.  
\]
only has trivial solution due to well-posed result in Proposition \ref{prop_linear_boundary}. Hence, \eqref{eq_5_subinduction_1} holds for $K$, which completes the subinduction step.
}

Let us now continue with the main induction argument and apply $\p_{\varepsilon_{\ell_1}}\cdots\p_{\varepsilon_{\ell_{m}}}$ to \eqref{eq_higher_linearization_IBVP_m} evaluated at $\varepsilon=0$. 
By \eqref{eq_5_subinduction_1}, we already have $\mathcal{R}_{1,m-1}=\mathcal{R}_{2,m-1}$.
Denote $w_j := \p_{\varepsilon_{\ell_1}}\cdots\p_{\varepsilon_{\ell_{m}}}u_j|_{\varepsilon=0}$ and $\widetilde{w}:=w_1-w_2$, which solves 
\begin{equation}\label{eq_higher_linearization_IBVP_w_m}
\begin{aligned}
\left\{\begin{array}{rrll}
(i\p_t+\Delta_A+q) \widetilde{w} &=& \mathcal{N}_{1,m}-\mathcal{N}_{2,m}  &\hbox{ in }Q,\\
\widetilde{w} &=& 0 &\hbox{ on } \Sigma,\\
\widetilde{w} &=& 0 &\hbox{ on } \{t=0\}\times \Omega.
\end{array}  \right.
\end{aligned}
\end{equation}

Let $\pi:\{1,2,\cdots,m\}\rightarrow \{1,2,\cdots,m\}$ be a permutation function. We denote the set of all permutations of $m$ elements by $S_{m}$.
Let us denote $\mathcal{N}_{m}: = \mathcal{N}_{1,m}-\mathcal{N}_{2,m}$ and rewrite it as follows
\[
\mathcal{N}_{m}:=\sum_{\sigma=1}^{m}\widetilde{B}_{\sigma(m-\sigma)} \sum_{\pi\in S_m}v_{\pi(1)}\cdots v_{\pi(\sigma)}\overline{v_{\pi(\sigma+1)}\cdots v_{\pi(m)}}.
\]

Next, the following general integral identity holds by applying a similar argument as the proof of Proposition \ref{prop_int_id_B}. 

\begin{prop}\label{prop_int_id_B_m}
If $\Lambda^\sharp_{A_1,q_1,N_1}=\Lambda^\sharp_{A_2,q_2,N_2}$ on $\mathcal{D}_\delta (\Sigma)$, then
\begin{align}\label{eq_int_id_B_m}
0 = \bigintsss_Q  \LC  \sum_{\sigma=1}^m\widetilde{B}_{\sigma(m-\sigma)} \sum_{\pi\in S_m}v_{\pi(1)}\cdots v_{\pi(\sigma)}\overline{v_{\pi(\sigma+1)}\cdots v_{\pi(m)}} \RC\overline{v_0} \rd x \rd t,
\end{align}
for any $v_\ell\in H^{2\kappa}(Q)$, $\ell=1,\cdots, m$, satisfying \eqref{eq_higher_linearization_IBVP_v} and $v_0\in H^{2\kappa}(Q)$ satisfying \eqref{eq_prop61_IBVP}.
\end{prop}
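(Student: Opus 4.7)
The plan is to mirror the proof of Proposition~\ref{prop_int_id_B}, but now working with the $m$-th order mixed derivative of the nonlinear solution $u_j$ at $\varepsilon=0$. First I would set $w_j := \p_{\varepsilon_1}\cdots\p_{\varepsilon_m}u_j|_{\varepsilon=0}$ and derive the IBVP \eqref{eq_higher_linearization_IBVP_w_m} that $\widetilde w=w_1-w_2$ satisfies. The crucial input here is the subduction identity \eqref{eq_5_subinduction_1}, which, combined with the induction hypothesis \eqref{eq_5_induction} that $\widetilde B_{\sigma\beta}=0$ for $2\le\sigma+\beta\le m-1$, forces $\mathcal{R}_{1,m-1}=\mathcal{R}_{2,m-1}$, so that the source term in \eqref{eq_higher_linearization_IBVP_w_m} reduces to $\mathcal{N}_m := \mathcal N_{1,m}-\mathcal N_{2,m}$. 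A direct application of Leibniz's rule to $\p_{\varepsilon_1}\cdots\p_{\varepsilon_m}(B_{j,\sigma\beta}u_j^\sigma\overline{u_j}^\beta)|_{\varepsilon=0}$, using $\p_{\varepsilon_\ell}u_j|_{\varepsilon=0}=v_\ell$, produces precisely the symmetrized sum
\[
\mathcal{N}_m = \sum_{\sigma=1}^{m}\widetilde B_{\sigma(m-\sigma)}\sum_{\pi\in S_m} v_{\pi(1)}\cdots v_{\pi(\sigma)}\overline{v_{\pi(\sigma+1)}\cdots v_{\pi(m)}}.
\]

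Next, the equality of DN maps together with the $m$-th order linearization (differentiate $\Lambda^\sharp_{A_j,q_j,N_j}(\sum \varepsilon_\ell f_\ell)$ in each $\varepsilon_\ell$ and evaluate at $\varepsilon=0$) yields $\p_\nu w_1|_{\Sigma^\sharp}=\p_\nu w_2|_{\Sigma^\sharp}$, hence $\p_\nu\widetilde w|_{\Sigma^\sharp}=0$. Since each $v_\ell\in\mathcal H_0^{2\kappa}(Q)$ and $\widetilde B_{\sigma(m-\sigma)}$ are smooth and vanish on $(0,T)\times\mathcal{O}$, the source $\mathcal{N}_m$ lies in $\mathcal H_0^{2\kappa}(Q)$ and vanishes on $(0,T)\times \mathcal{O}$. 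Applying the unique continuation estimate \cite[Corollary 1]{Bella-Fraj2020} exactly as in Proposition~\ref{prop_int_id_B} gives $\widetilde w=0$ on $(0,T)\times(\Omega_3\setminus\Omega_2)$.

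Finally I would introduce the cutoff $\chi$ from \eqref{eq_def_cutoff_chi}, set $w^\sharp=\chi\widetilde w\in\mathcal H_0^{2\kappa}(Q)$, and observe that $w^\sharp$ satisfies
\[
\left\{\begin{array}{rrll}
(i\p_t+\Delta_A+q)w^\sharp &=& \mathcal{N}_m+[\Delta,\chi]\widetilde w & \hbox{ in }Q,\\
w^\sharp &=& 0 & \hbox{ on }\Sigma,\\
w^\sharp &=& 0 & \hbox{ on }\{t=0\}\times\Omega,
\end{array}\right.
\]
where I used $\chi\mathcal{N}_m=\mathcal{N}_m$ since $\mathcal{N}_m=0$ on $(0,T)\times\mathcal{O}$. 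Pairing with $\overline{v_0}$ for $v_0$ solving \eqref{eq_prop61_IBVP}, integrating by parts over $Q$, and using the vanishing of $w^\sharp$ and $\p_\nu w^\sharp$ on $\Sigma$, together with $w^\sharp(0,\cdot)=0$ and $v_0(T,\cdot)=0$, gives
\[
0 = \int_Q \bigl(\mathcal{N}_m+[\Delta,\chi]\widetilde w\bigr)\overline{v_0}\rd x\rd t.
\]
The commutator term drops because $[\Delta,\chi]$ is supported in $\Omega_3\setminus\Omega_2$ where $\widetilde w=0$, and what remains is exactly \eqref{eq_int_id_B_m}. The only step that requires real care is the bookkeeping in the Leibniz expansion to confirm that the order-$m$ term is genuinely $\mathcal{N}_m$ and that all lower-order contributions cancel by the subduction hypothesis; everything else is a direct transcription of the $m=2$ argument.
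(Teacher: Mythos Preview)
Your proposal is correct and follows essentially the same approach as the paper, which simply states that the identity holds ``by applying a similar argument as the proof of Proposition~\ref{prop_int_id_B}''; the setup of $\widetilde w$ via the subduction identity \eqref{eq_5_subinduction_1} and the induction hypothesis \eqref{eq_5_induction}, together with the explicit form of $\mathcal N_m$, is already carried out in the paper immediately before Proposition~\ref{prop_int_id_B_m}. Your sketch of the remaining steps (equality of partial Neumann data, unique continuation, cutoff, integration by parts against $\overline{v_0}$) is exactly the transcription of the $m=2$ case that the paper intends.
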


Now we are ready to recover $B_{\sigma\beta}$ by the following general density argument.

\begin{prop}\label{recoverB_m}
Let $m\ge2$.
If \eqref{eq_int_id_B_m} holds 
for any $v_\ell\in H^{2\kappa}(Q)$, $\ell=1,\,\cdots,\,m$, satisfying \eqref{eq_higher_linearization_IBVP_v} and $v_0\in H^{2\kappa}(Q)$ satisfying \eqref{eq_prop61_IBVP}, 
then for $1\le\sigma\le m$,
\[
\widetilde{B}_{\sigma(m-\sigma)}=0 \quad \hbox{in}\quad Q.
\]
\end{prop}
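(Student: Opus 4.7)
The plan is to imitate the argument of Proposition~\ref{recoverB_2}, but with $m+1$ concentrated GO solutions $v_0, v_1, \ldots, v_m$ instead of three, together with a phase-isolation argument that picks out one $\widetilde{B}_{\sigma_0(m-\sigma_0)}$ at a time. Fix $(t_0,x_0)\in Q$ and $\sigma_0\in\{1,\ldots,m\}$; the aim is to prove $\widetilde{B}_{\sigma_0(m-\sigma_0)}(t_0,x_0)=0$. For each $\ell=0,1,\ldots,m$, I would take the concentrated GO solutions of Section~\ref{sec:go concentrated} with phase $\Phi_\ell(t,x)=\rho(x\cdot\omega_\ell-\rho|\omega_\ell|^2 t)$, nonzero direction $\omega_\ell\in\R^n$, and leading amplitude $V_0^{(\ell)}$ of the form \eqref{eq_prop52_amplitude} localized in a $\delta$-neighborhood of the line through $x_0$ in direction $\omega_\ell$, with an arbitrary time cutoff $\iota(t)$.

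Substituting these into the identity \eqref{eq_int_id_B_m}, expanding products, and using $\|V_k^{(\ell)}\|_{H^m(Q)}=O(1)$, $\|R_\rho^{(\ell)}\|_{H^m(Q)}=O(\rho^{-N+2m})$ together with the Banach algebra embedding $H^m(Q)\hookrightarrow C^1(\overline Q)$ (for $m>\tfrac{n+1}{2}+1$), the identity reduces to
\begin{equation*}
0=\int_Q\sum_{\sigma=1}^{m}\widetilde{B}_{\sigma(m-\sigma)}\sum_{\pi\in S_m} e^{i\Psi_{\pi,\sigma}}\prod_{a=1}^{\sigma}V_0^{(\pi(a))}\prod_{b=\sigma+1}^{m}\overline{V_0^{(\pi(b))}}\,\overline{V_0^{(0)}}\,dx\,dt+O(\rho^{-1}),
\end{equation*}
where $\Psi_{\pi,\sigma}=\sum_{a=1}^{\sigma}\Phi_{\pi(a)}-\sum_{b=\sigma+1}^{m}\Phi_{\pi(b)}-\Phi_0$. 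The step after this is to choose the directions so that \emph{exactly} the target partition cancels in the phase. With $S_0=\{1,\ldots,\sigma_0\}$, I would impose
\begin{equation*}
\sum_{a=1}^{\sigma_0}\omega_a-\sum_{b=\sigma_0+1}^{m}\omega_b=\omega_0,\qquad \sum_{a=1}^{\sigma_0}|\omega_a|^2-\sum_{b=\sigma_0+1}^{m}|\omega_b|^2=|\omega_0|^2,
\end{equation*}
so $\Psi_{\pi,\sigma_0}\equiv 0$ for every $\pi$ preserving the partition $S_0\sqcup S_0^c$, while requiring that every other subset $S\subset\{1,\ldots,m\}$ (of any size $\sigma$, including $|S|=\sigma_0$ with $S\neq S_0,S_0^c$) gives a nontrivial $\nabla'\Psi_{\pi,\sigma}\neq 0$ with $\nabla'=(\partial_t,\nabla_x)$. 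For the terms with non-vanishing phase, writing $e^{i\Psi_{\pi,\sigma}}=-|\nabla'\Psi_{\pi,\sigma}|^{-2}\langle i\nabla'\Psi_{\pi,\sigma},\nabla' e^{i\Psi_{\pi,\sigma}}\rangle$ and integrating by parts as in the $m=2$ case yields a contribution of order $O(\rho^{-1})$.

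Sending $\rho\to\infty$ and then exploiting that the cutoff $\iota(t)$ is arbitrary (which freezes $t=t_0$) and that the $V_0^{(\ell)}$ are localized around the lines $\gamma_{x_0,\omega_\ell}$ through $x_0$, a rescaling by an appropriate power of $\delta$ followed by $\delta\to 0$ isolates the pointwise value $\widetilde{B}_{\sigma_0(m-\sigma_0)}(t_0,x_0)$ (multiplied by a nonvanishing product of exponentials from the amplitudes $W$). Since $(t_0,x_0)$ is arbitrary and $\sigma_0$ ranges over $\{1,\ldots,m\}$, this completes the inductive step.

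The main obstacle will be verifying the compatibility of the two phase-cancellation constraints with the separation conditions for all competing partitions. Concretely, the target constraints are $n+1$ equations on the $n(m+1)$-dimensional parameter space of $(\omega_0,\ldots,\omega_m)$, leaving an $(nm-1)$-dimensional solution surface; each competing subset $S$ imposes an extra codimension-$(n+1)$ bad locus on that surface, and there are only $2^m-2$ such subsets. For $n\geq 2$ and $m\geq 2$, a dimension count shows the complement is nonempty, so a generic perturbation along the target surface produces admissible $\omega_\ell$'s. Explicitly exhibiting such a choice---for instance by taking $\omega_{\sigma_0+1},\ldots,\omega_m$ fixed in general position and solving the two constraints for $\omega_1,\ldots,\omega_{\sigma_0},\omega_0$---will be the one calculation I would still need to carry out in detail; everything else mirrors Proposition~\ref{recoverB_2}.
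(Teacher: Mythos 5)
Your overall strategy coincides with the paper's: concentrated GO solutions for $v_0,\dots,v_m$, frequency vectors chosen so that only the phase of the target partition $S_0=\{1,\dots,\sigma_0\}$ cancels, non-stationary phase (writing $e^{i\Psi}=-|\nabla'\Psi|^{-2}\langle i\nabla'\Psi,\nabla'e^{i\Psi}\rangle$ with $\nabla'=(\p_t,\nabla_x)$ and integrating by parts) to push every competing term to $O(\rho^{-1})$, and then localization in $t$ (arbitrary $\iota$) and in $x$ (rescaling by $\delta^{-n}$) to read off $\widetilde B_{\sigma_0(m-\sigma_0)}(t_0,x_0)$. All of that matches the paper's proof.

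The genuine gap is precisely the step you defer: producing admissible $\omega_0,\dots,\omega_m$. The dimension count ("each competing subset imposes a codimension-$(n+1)$ bad locus on the $(nm-1)$-dimensional target surface") is only a heuristic until you verify that each bad locus is a \emph{proper} subset of the target surface, i.e.\ that the competing pair of equations is not implied by the target pair; subtracting the vector equations, the bad locus for a subset $S$ reduces to $\sum_{a\in S\setminus S_0}\omega_a=\sum_{a\in S_0\setminus S}\omega_a$ together with the analogous quadratic relation, and one must check subset by subset that the target quadric is not contained in these loci while keeping every $\omega_\ell\neq 0$. This is where the paper's proof does its real work, and the resolution is not uniform in $\sigma_0$: for $\sigma_0\ge 2$ it imposes the additional genericity conditions (a)--(d) (no $\omega_i$ lying in the span of the vectors on the other side of the partition, no vanishing partial sums) and exhibits explicit vectors $\omega_1=-(\sigma_0-1)e_1$, $\omega_2=\cdots=\omega_{\sigma_0}=e_1$, with the remaining vectors along $e_2$ --- a construction that degenerates exactly at $\sigma_0=1$ --- whereas for $\sigma_0=1$ it shows that the two target constraints alone already force every competing phase to be nonstationary, with no genericity needed. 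So your "one remaining calculation" is the crux of the proposition, and a single uniform generic-perturbation argument would still require the properness verification; note also that $S_0^c$ is a legitimate competitor (it carries the coefficient $\widetilde B_{(m-\sigma_0)\sigma_0}$) whose phase reduces to $-2\Phi_0$ on the target surface, so it is harmless, but it should be checked rather than excluded by fiat.
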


\begin{proof}
{For a fixed $x_0\in\Omega$,} we take $v_\ell$ and $v_0$ as in the proof of Proposition \ref{recoverB_2}. 
Following a similar argument, we obtain from the integral identity \eqref{eq_int_id_B_m} that 
\begin{equation}\label{eq_prop53_1}
O(\rho^{-1}) = \bigintsss_Q    \sum_{\sigma=1}^m\widetilde{B}_{\sigma(m-\sigma)} \sum_{\pi\in S_m}
\mathcal{E}_{\sigma,\pi}
V_0^{\pi(1)}\cdots V_0^{\pi(\sigma)}\overline{V_0^{\pi(\sigma+1)}\cdots V_0^{\pi(m)}V_0^{(0)}}  \rd x \rd t,
\end{equation}
where $\mathcal{E}_{\sigma,\pi} =e^{i(\Phi_{\pi(1)}+\cdots+\Phi_{\pi(\sigma)}-\Phi_{\pi(\sigma+1)}-\cdots-\Phi_{\pi(m)}-\Phi_{\pi(0)})}$, and $V_0^{(\ell)}$ and $\Phi_\ell$ are as in \eqref{eq_prop52_amplitude} and \eqref{eq_prop52_Phi} for $\ell = 0,\,\cdots,\, m$.

{\bf Step 1.} We first recover $\widetilde{B}_{1(m-1)}$. In doing so, we write the right-hand side of \eqref{eq_prop53_1} as the sum of three terms, 
\begin{align*}
\RNum{1} &= \bigintsss_Q    \widetilde{B}_{1(m-1)} \sum\limits_{\substack{\pi\in S_m\\ \pi(1)=1}}
\mathcal{E}^{(1)}_{1,\pi}
V_0^{(1)}\overline{V_0^{\pi(2)}\cdots V_0^{\pi(m)}V_0^{(0)}}  \rd x \rd t,
\\
\RNum{2} &= \bigintsss_Q    \widetilde{B}_{1(m-1)} \sum\limits_{\substack{\pi\in S_m\\ \pi(1)\neq 1}}
\mathcal{E}^{(2)}_{1,\pi}
V_0^{\pi(1)}\overline{V_0^{\pi(2)}\cdots V_0^{\pi(m)}V_0^{(0)}}  \rd x \rd t,
\\
\RNum{3} &= \bigintsss_Q \sum\limits_{\substack{1\le\sigma\le m\\
\sigma\neq 1}}  \widetilde{B}_{\sigma(m-\sigma)} \sum\limits_{\substack{\pi\in S_m}}
\mathcal{E}^{(3)}_{\sigma,\pi}
V_0^{\pi(1)}\cdots V_0^{\pi(\sigma)}\overline{V_0^{\pi(\sigma+1)}\cdots V_0^{\pi(m)}V_0^{(0)}}  \rd x \rd t.
\end{align*}

The exponents in $\mathcal{E}^{(1)}_{1,\pi}$ vanish by taking vectors $\omega_\ell$, $\ell=0,1,\cdots, m$, such that 
\begin{equation}\label{eq_prop53_2}
\begin{aligned}
\omega_1 &= \omega_2+\cdots+\omega_m+\omega_0,\\
|\omega_1|^2 &= |\omega_2|^2+\cdots+|\omega_m|^2+|\omega_0|^2, 
\end{aligned}
\end{equation}
which also guarantee all the exponents in $\mathcal{E}^{(2)}_{1,\pi}$, $\mathcal{E}^{(3)}_{\sigma,\pi}$  don't vanish. To see this, if some exponent in $\mathcal{E}^{(2)}_{1,\pi}$ vanishes, there exists $\pi\in S_m$, $\pi(1)\neq 1$ such that
\begin{equation}\label{eq_prop53_3}
\begin{aligned}
\omega_{\pi(1)} &= \omega_{\pi(2)}+\cdots+\omega_{\pi(m)}+\omega_0,\\
|\omega_{\pi(1)}|^2 &= |\omega_{\pi(2)}|^2+\cdots+|\omega_{\pi(m)}|^2+|\omega_0|^2.
\end{aligned}
\end{equation}
The second equation in \eqref{eq_prop53_2} and \eqref{eq_prop53_3} implies $|\omega_1|^2 = |\omega_{\pi(1)}|^2$, and therefore $w_\ell= 0$ for $0\le \ell\le m$, $\ell\neq 1$ and $\ell\neq \pi(1)$, which is a contradiction.

If some exponent in $\mathcal{E}^{(3)}_{\sigma,\pi}$ vanishes, there exists some $\sigma\neq 1$, $\pi\in S_m$ that 
\begin{equation}\label{eq_prop53_4}
\begin{aligned}
\omega_{\pi(1)}+\cdots+\omega_{\pi(\sigma)} &= \omega_{\pi(\sigma+1)}+\cdots+\omega_{\pi(m)}+\omega_0,\\
|\omega_{\pi(1)}|^2+\cdots+|\omega_{\pi(\sigma)}|^2 &= |\omega_{\pi(\sigma+1)}|^2+\cdots+|\omega_{\pi(m)}|^2+|\omega_0|^2.
\end{aligned}
\end{equation}
By taking the difference of \eqref{eq_prop53_2} and \eqref{eq_prop53_4}, we have either
\begin{enumerate}
\item[(i)]$|\omega_1|^2 = |\omega_{\pi(1)}|^2 + \cdots + |\omega_{\pi(\sigma)}|^2$ if $\pi(j)\neq 1$ for $1\le j\le \sigma$,
\end{enumerate}
or
\begin{enumerate}
\item[(ii)]$0 = |\omega_{\pi(1)}|^2 +\cdots + |\omega_{\pi(j_0-1)}|^2+|\omega_{\pi(j_0+1)}|^2 + \cdots + |\omega_{\pi(\sigma)}|^2$ if $\pi(j_0)=1 $ for some $1\le j_0\le \sigma$.
\end{enumerate}
Case (i) implies $\omega_0=0$, which is a contradiction.
Case (ii) implies $\omega_{\pi(1)}= \cdots = \omega_{\pi(j_0-1)}=\omega_{\pi(j_0+1)}= \cdots=\omega_{\pi(\sigma)}=0$, which is also a contradiction.

Following the same argument as in the proof of Proposition \ref{recoverB_2}, we can show all terms with nontrivial exponent are of order $O(\rho^{-1})$. Therefore, by sending $\rho\to \infty$, the only surviving term is $\RNum{1}$ and we arrive at 
\[
0 = \bigintsss_Q   (m-1)!  \widetilde{B}_{1(m-1)} 
V_0^{(1)}\overline{V_0^{(2)}\cdots V_0^{(m)}V_0^{(0)}}  \rd x \rd t.
\]
{For each $t_0\in (0,T)$, note that $\iota (t)\in C^\infty_0(0,T)$ in the definition of $V_0^{(\ell)}$ is arbitrary so that we can reduce the above integral to
\[
0 = \bigintsss_\Omega   (m-1)!  \widetilde{B}_{1(m-1)} (t_0,\cdot)
V_0^{(1)}\overline{V_0^{(2)}\cdots V_0^{(m)}V_0^{(0)}}(t_0,\cdot)  \rd x,
\]
with $V^{(\ell)}(t_0,x_0)\neq 0$. Since the integrand is localized in a $\delta$-neighborhood of $x_0$,} we get by multiplying the above equation by $\delta^{-n}$ and sending $\delta\to 0$ that 
\[
\widetilde{B}_{1(m-1)}(t_0,x_0) = 0.
\]

\textbf{Step 2.} Lastly, we recover $B_{b(m-b)}$, $2\le b\le m$. In doing so, we write the right-hand side of \eqref{eq_prop53_1} as the sum of three terms, 
\begin{align*}
\RNum{4}&=
\bigintsss_Q    \widetilde{B}_{b(m-b)} \sum\limits_{\substack{\pi\in S_m\\ \{\pi(1),\cdots,\pi(b)\}= \{1,\cdots,b\} }}
\mathcal{E}_{b,\pi}^{(4)}
V_0^{(1)}\cdots V_0^{(b)}\overline{V_0^{\pi(b+1)}\cdots V_0^{\pi(m)}V_0^{(0)}}  \rd x \rd t, \\
\RNum{5}&=
\bigintsss_Q    \widetilde{B}_{b(m-b)} \sum\limits_{\substack{\pi\in S_m\\ \{\pi(1),\cdots,\pi(b)\}\neq\{1,\cdots,b\} }}
\mathcal{E}^{(5)}_{b,\pi}
V_0^{\pi(1)}\cdots V_0^{\pi(b)}\overline{V_0^{\pi(b+1)}\cdots V_0^{\pi(m)}V_0^{(0)}}  \rd x \rd t,
\\
\RNum{6}&=
\bigintsss_Q \sum\limits_{\substack{1\le \sigma\le m\\\sigma\neq b}}   \widetilde{B}_{\sigma(m-\sigma)} \sum_{\pi\in S_m}
\mathcal{E}_{\sigma,\pi}^{(6)}
V_0^{\pi(1)}\cdots V_0^{\pi(\sigma)}\overline{V_0^{\pi(\sigma+1)}\cdots V_0^{\pi(m)}V_0^{(0)}}  \rd x \rd t.
\end{align*}

The exponents in $\mathcal{E}^{(4)}_{b,\pi}$ vanish by taking $\omega_\ell$, $\ell=0,\,1,\,\cdots,\, m$, such that 
\begin{equation}\label{eq_prop53_5}
\begin{aligned}
\omega_1+\cdots +\omega_b &= \omega_{b+1}+\cdots+\omega_m+\omega_0,\\
|\omega_1|^2+\cdots +|\omega_b|^2  &= |\omega_{b+1}|^2+\cdots+|\omega_m|^2+|\omega_0|^2.
\end{aligned}
\end{equation}

We shall need all the exponents in $\mathcal{E}^{(6)}_{\sigma,\pi}$ don't vanish and we prove by contradiction. 
If some exponent in $\mathcal{E}^{(6)}_{\sigma,\pi}$ vanishes, there exists some $\sigma\neq 1$, $\pi\in S_m$ that \eqref{eq_prop53_4} holds.
By taking defference of \eqref{eq_prop53_5} and \eqref{eq_prop53_4}, we have 
\begin{equation}\label{eq_prop53_6}
\sum_{i\in I} \omega_i = \sum_{j\in J} \omega_j
\end{equation}
for some $I$ being a subset of $\{1,\cdots, b\}$ and $J$ being a subset of $\{b+1,\cdots,m\}$. We claim either $I$ is a proper subset of $\{1,\cdots,b\}$ or $J$ is a proper subset of $\{b+1,\cdots,m\}$. If not, we have $\omega_0 = 0$, which is a contradiction. 

In order to conclude a contradiction from \eqref{eq_prop53_6}, we shall further impose the following conditions on $\omega_\ell$:
\begin{enumerate}
\item[(a)] The vectors $\omega_\ell$, $1\le \ell\le b$ are not in the span of $\{\omega_{b+1},\,\cdots,\,\omega_m\}$,
\item[(b)] The vectors $\omega_\ell$, $b+1\le \ell\le m$ are not in the span of $\{\omega_{1},\,\cdots,\,\omega_b\}$,
\item[(c)] Let $\widetilde{I}$ be any proper subset of $\{1,\,\cdots, \,b\}$, then $\sum_{i\in \widetilde{I}}\omega_{i}\neq0$,
\item[(d)] Let $\widetilde{J}$ be any proper subset of $\{{b+1},\,\cdots,\, m\}$, then $\sum_{j\in \widetilde{J}}\omega_{j}\neq0$.
\end{enumerate}

If $I$ is a proper subset of $\{1,\cdots, b\}$, \eqref{eq_prop53_6} 
contradicts with conditions (a) and (c). If $J$ is a proper subset of $\{b+1,\cdots,m\}$, \eqref{eq_prop53_6} contradicts with conditions (b) and (d).

Conditions (a)-(d) together with \eqref{eq_prop53_5} also guarantee all the exponents in $\mathcal{E}^{(5)}_{b,\pi}$ don't vanish. To see this, if some exponent in $\mathcal{E}^{(5)}_{b,\pi}$ vanishes, there exists $\pi\in S_m$, $\{\pi(1),\cdots, \pi(b)\}\notin S_b$ such that
\begin{equation}\label{eq_prop53_7}
\begin{aligned}
\omega_{\pi(1)}+\cdots+ \omega_{\pi(b)} &= \omega_{\pi(b+1)}+\cdots+\omega_{\pi(m)}+\omega_0,\\
|\omega_{\pi(1)}|^2+\cdots+ |\omega_{\pi(b)}|^2 &= |\omega_{\pi(b+1)}|^2+\cdots+|\omega_{\pi(m)}|^2+|\omega_0|^2.
\end{aligned}
\end{equation}
By taking difference of \eqref{eq_prop53_5} and \eqref{eq_prop53_7}, we again have \eqref{eq_prop53_6} for some $I$ being a subset of $\{1,\cdots, b\}$ and $J$ being a subset of $\{b+1,\cdots,m\}$. Then we use the same argument as above.

We provide a set of $\omega_\ell$, $0\le \ell\le m$, such that \eqref{eq_prop53_5} and condition (a)-(d) are satisfied: 
$$
\omega_1 = -(b-1)e_1,\quad \omega_2 = \cdots = \omega_b = e_1;
$$
$$
\omega_{b+1} = \cdots = \omega_{m}=\LC\frac{b(b-1)}{{m-b}+(m-b)^2}\RC^{1/2} e_2,\quad \omega_0= (b-m)\LC\frac{b(b-1)}{{m-b}+(m-b)^2}\RC^{1/2} e_2.
$$
For this construction, 
it's crucial that $b\ge 2$. This is the reason why we discuss the case $B_{1(m-1)}$ separetely.

Therefore, applying a similar argument as in Step 1 yields that 
$$
0= (m-b)!\bigintsss_Q    \widetilde{B}_{b(m-b)} 
V_0^{(1)}\cdots V_0^{(b)}\overline{V_0^{ (b+1)}\cdots V_0^{ (m)}V_0^{(0)}}  \rd x \rd t 
$$
since those with nonvanishing exponents go to zero as $\rho\rightarrow \infty$. 
This then leads to the uniqueness of $B_{b(m-b)}$, $2\leq b\leq m$, for the same reason as Step 1.

\end{proof}

\subsection{Proof of Theorem~\ref{thm_main}}
The linear coefficients $A$ and $q$ are uniquely recovered in Theorem~\ref{thm:recover A}. By applying the induction argument, Proposition~\ref{recoverB_2} and Proposition~\ref{recoverB_m} give the unique determination of the nonlinear coefficients $B_{\sigma\beta}$ for all $2\leq \sigma+\beta\leq M$ with $\sigma\geq 1$. Combining both completes the proof of the main theorem.

\appendix
\section{Inverse problems for the linear magnetic Schr\"odinger equation}\label{sec:appendix}
In this section, we consider the linear MSE $(i\p_t+c(t)\Delta_A+q) v =0$ with the aim of recovering both $c(t)$ and $(A,q)$ from the DN map.

\subsection{Forward problem for the linear MSE and an integral identity}
First, we give a well-posedness result for the IBVP problem.
\begin{prop}\label{prop_app_wellpose}
For any $f\in \mathcal{H}^{\frac{5}{2}}_0(\Sigma)$, $c(t)\in C^1([0,T];\R)$, $A\in C^2(\overline{Q};\R^n)$, and $q\in C^1(\overline{Q};\R)$. The IBVP \eqref{eq_app_IBVP} has a unique solution $v\in H^{1,2}(Q)$ satisfying 
\begin{align}\label{eq_app_wellpose}
\|v\|_{H^{1,2}(Q)}\lesssim \|f\|_{H^{\frac{5}{2}}(\Sigma)}.
\end{align}

\end{prop}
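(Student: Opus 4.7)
The plan is to reduce \eqref{eq_app_IBVP} to a Cauchy--Dirichlet problem with zero boundary data so that Theorem~\ref{thm_linear_wellposedness}(b), which is already formulated for a time-dependent leading coefficient $c(t)$, can be invoked directly. The overall scheme mirrors the argument used in the proof of Proposition~\ref{prop_linear_boundary} for $c\equiv 1$: lift $f$ to a function on $Q$, reduce to an inhomogeneous source problem with trivial boundary and initial data, and apply the energy estimate.

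First I would use the trace and lifting theorems of Lions--Magenes \cite[Chapter 4, Theorem~2.3]{lions_non-homogeneous_1972} to produce a lifting $U\in H^{3}(Q)$ with $U|_{\Sigma}=f$, $U(0,\cdot)=0$ in $\Omega$, and
\begin{equation*}
\|U\|_{H^{3}(Q)}\lesssim \|f\|_{H^{5/2}(\Sigma)}.
\end{equation*}
The hypothesis $f\in\mathcal{H}_0^{5/2}(\Sigma)$, carrying the compatibility conditions $\p_t^{\ell}f(0,\cdot)=0$ on $\p\Omega$ for $\ell\leq 2$, is precisely what is needed to obtain such a lifting that additionally satisfies $\p_t U(0,\cdot)=0$ in $\Omega$. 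Setting $\widetilde{v}:=v-U$, it suffices to construct $\widetilde{v}\in H^{1,2}(Q)$ solving
\begin{equation*}
(i\p_t+c(t)\Delta_A+q)\widetilde{v}=F,\qquad \widetilde{v}|_{\Sigma}=0,\qquad \widetilde{v}(0,\cdot)=0,
\end{equation*}
where $F:=-(i\p_t+c(t)\Delta_A+q)U$.

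The next step is to verify that $F\in H^{1,0}(Q)$ and that $F(0,\cdot)=0$ in $\Omega$. Since $U\in H^{3}(Q)\hookrightarrow H^{2,2}(Q)$ and $c\in C^{1}$, $A\in C^{2}$, $q\in C^{1}$, the quantities $\p_t U$, $\Delta U$, $\nabla U$, $U$ together with their first time derivatives all lie in $L^{2}(Q)$, so a direct computation gives $F\in H^{1,0}(Q)$ with $\|F\|_{H^{1,0}(Q)}\lesssim \|U\|_{H^{3}(Q)}\lesssim \|f\|_{H^{5/2}(\Sigma)}$. Using $U(0,\cdot)=0$ and $\p_t U(0,\cdot)=0$ in $\Omega$, every term in $F|_{t=0}$ vanishes, so $F(0,\cdot)=0$. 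Theorem~\ref{thm_linear_wellposedness}(b) then yields a unique $\widetilde{v}\in H^{1,2}(Q)$ with $\|\widetilde{v}\|_{H^{1,2}(Q)}\lesssim \|F\|_{H^{1,0}(Q)}$, and $v:=\widetilde{v}+U\in H^{1,2}(Q)$ solves \eqref{eq_app_IBVP} and satisfies \eqref{eq_app_wellpose}.

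Uniqueness follows by applying Theorem~\ref{thm_linear_wellposedness}(b) to the difference of two solutions, which solves the homogeneous IBVP with $F\equiv 0$ and trivial data, forcing it to vanish. The main subtlety, and the only thing that needs careful bookkeeping, is the matching of compatibility conditions at $t=0$: the index $5/2$ on the boundary is the minimal one compatible with producing a lifting $U\in\mathcal{H}_0^{3}(Q)$ (equivalently, one for which $F\in H^{1,0}(Q)$ \emph{and} $F(0,\cdot)=0$), which is exactly what Theorem~\ref{thm_linear_wellposedness}(b) requires as input.
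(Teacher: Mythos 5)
Your proposal is correct and follows essentially the same route as the paper: lift $f$ to $U\in\mathcal{H}_0^{3}(Q)$ via Lions--Magenes, set $\widetilde{v}=v-U$, check that $\widetilde F=-L_{c,A,q}U\in H^{1,0}(Q)$ with $\widetilde F(0,\cdot)=0$, and invoke Theorem~\ref{thm_linear_wellposedness}(b). Your extra bookkeeping of the compatibility conditions at $t=0$ is exactly the point the paper relies on implicitly through $U\in\mathcal{H}_0^{3}(Q)$.
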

\begin{proof}
In light of \cite[Chapter 4, Theorem 2.3]{lions_non-homogeneous_1972}, there exists $U\in \mathcal{H}^{3}_0(Q)$ such that $U|_{\Sigma} = f$ with 
\begin{align}\label{eq_app_1_1}
\|U\|_{H^3(Q)}\lesssim \|f\|_{H^{\frac{5}{2}}(\Sigma)}.
\end{align}

To show the existence of $v$ to \eqref{eq_app_IBVP}, it suffices to find $\widetilde{v} = v- U\in H^{1,2}(Q)$ and $\widetilde{v}(0,\cdot)=0$ in $\Omega$ such that 
\[
L_{c,A,q}\widetilde{v} = -L_{c,A,q}U =: \widetilde{F}\in H^{1,0}(Q).
\]
By Theorem \ref{thm_linear_wellposedness} (b), since $\widetilde{F}(0,\cdot)=0$ in $\Omega$, there exists $\widetilde{v}\in H^{1,2}(Q)$ satisfying
\begin{align}\label{eq_app_1_2}
\|\widetilde{v}\|_{H^{1,2}(Q)}\lesssim\|\widetilde{F}\|_{H^{1,0}(Q)}.
\end{align}
Inequality \eqref{eq_app_wellpose} is implied by \eqref{eq_app_1_1} and \eqref{eq_app_1_2}.
\end{proof}

The corresponding GO solutions are as follows. 

\begin{prop}\label{prop_app_go}
Suppose {$c(t)\in C^6([0,T];\R)$}, $A\in C^6(\overline{Q};\R^n)$, $q\in C^5(\overline{Q};R)$.
Let $\omega\in\mathbb{S}^{n-1}$. There exists GO solutions to the linear magnetic Schr\"odinger equation $L_{c,A,q}v = 0$ of the form 
$$
v(t,x) = e^{i\Phi(t,x)}  \LC V_0(t,x) +\rho^{-1} V_{1}(t,x) \RC + R_{\rho} (t,x),
$$
{in $H^{1,2}(Q)$} satisfying the initial condition $v(0,\cdot)=0$ in $\Omega$.
Here $V_0,\, V_1\in H^{3}(Q)$ are given by \eqref{eq_amplitude_v_0_non} - \eqref{eq_amplitude_v_l_non} and satisfy 
\begin{equation}\label{eq_cor41_v0_v1}
\begin{aligned}
\|V_0\|_{H^{3}(Q)}\leq C \langle \tau,\xi\rangle^{4} h^{-3}, \quad  \|V_1\|_{H^{3}(Q)}\leq C \langle \tau,\xi\rangle^{6} h^{-4},
\end{aligned}
\end{equation}
for any $\tau\in\R$, $h >0$ small enough and $\xi \in \omega^\perp$, where the constant $C>0$ depends only on $\Omega$ and $T$. 
The remainder term $R_\rho$ satisfies
\begin{align}\label{eq_4_reminder}
\|R_\rho\|_{L^2(0,T;H^1(\Omega))} +\rho \|R_\rho\|_{L^2(Q)} \lesssim \langle \tau,\xi\rangle^{6} h^{-3}. 
\end{align}

In particular, as $\rho\to \infty$, we have 
\begin{align}\label{eq_app_est_V0}
\|V_0\|_{H^3(Q)} = O(1), \quad \|V_1\|_{H^3(Q)} = O(1)
\end{align}
and 
\begin{align}\label{eq_app_est_R}
\|R_\rho\|_{L^2(0,T;H^1(Q))}=O(1), \quad \|R_\rho\|_{L^2(Q)} = O(\rho^{-1}).
\end{align}

A similar argument holds for the adjoint equation $L_{c,A,q}v=0$ with $v(T,\cdot)=0$.
\end{prop}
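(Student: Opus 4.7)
The argument mirrors the construction in the proof of Proposition~\ref{prop:GO_Nonconcentrated}, but now the phase and the amplitude correction in $W$ carry the time-dependent coefficient $c(t)$; only two amplitude terms ($N=1$) are needed for the bounds claimed in \eqref{eq_4_reminder}. I would first take $V_0$ exactly as in \eqref{eq_amplitude_v_0_non} with $\Theta$ from \eqref{eq_amplitude_eta} and $W$ from \eqref{eq_amplitude_e_non}; the $c(t)$-dependent factor $\exp\{ic'(t)(x\cdot\omega)^2/(8c(t)^2)\}$ is exactly what is needed so that $V_0$ solves the leading transport equation in \eqref{CON:a_k}. The subleading amplitude $V_1$ is then produced by the explicit integral \eqref{eq_amplitude_v_l_non} with $k=1$. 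Both amplitudes vanish near $t=0$ and $t=T$ because of the cutoff $\zeta$, so in particular $V_0(0,\cdot)=V_1(0,\cdot)=0$ in $\Omega$. The estimates \eqref{eq_cor41_v0_v1} are obtained by direct differentiation of the explicit formulas, reusing the counting argument from the proof of Proposition~\ref{prop:GO_Nonconcentrated}: each temporal derivative contributes a factor $|\tau|$ (from $e^{-it\tau}$ in $\Theta$) or $h^{-1}$ (from $\zeta$), each spatial derivative contributes $\langle\xi\rangle$, and the extra $c(t)$-dependent factor in $W$ is smooth on $\overline{Q}$ with bounded derivatives contributing only to a uniform multiplicative constant.

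For the remainder, I would solve \eqref{CON:r} with $N=1$. The source $F=-\rho^{-1}e^{i\Phi}L_{c,A,q}V_1$ vanishes near $t=0$ because $V_1$ does, so Theorem~\ref{thm_linear_wellposedness} applies. The $L^2$-decay follows immediately from Remark~\ref{remark_L2_est} and the elementary bound $\|F\|_{L^2(Q)}\lesssim \rho^{-1}\|V_1\|_{H^{1,2}(Q)}$, which together with \eqref{eq_cor41_v0_v1} yields the estimate for $\rho\|R_\rho\|_{L^2(Q)}$ in \eqref{eq_4_reminder}.

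The main obstacle is the bound on $\|R_\rho\|_{L^2(0,T;H^1(\Omega))}$. Applying Theorem~\ref{thm_linear_wellposedness}(a) reduces this to bounding $\|F\|_{H^1(0,T;H^{-1}(\Omega))}$, but naive time-differentiation of $e^{i\Phi}$ produces the large factor $\partial_t\Phi=O(\rho^2)$. The key trick will be to use the identity
\[
e^{i\Phi}=-i\,\frac{\sqrt{c(t)}}{\rho|\omega|^2}\,\omega\cdot\nabla e^{i\Phi}
\]
and integrate by parts in the $\omega$-direction when pairing $\partial_t F$ against a test function $\phi\in H^1_0(\Omega)$. This integration by parts gains one factor $\rho^{-1}$ which, together with the prefactor $\rho^{-1}$ of $F$, offsets the $\rho^2$ coming from $\partial_t\Phi$; the boundary terms vanish since $\phi\in H^1_0(\Omega)$. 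The resulting bound is of the form
\[
\|\partial_t F\|_{L^2(0,T;H^{-1}(\Omega))}\lesssim \|L_{c,A,q}V_1\|_{L^2(0,T;H^1(\Omega))}+\rho^{-1}\|\partial_t L_{c,A,q}V_1\|_{L^2(Q)},
\]
which after substituting \eqref{eq_cor41_v0_v1} yields \eqref{eq_4_reminder}. The $\rho\to\infty$ statements \eqref{eq_app_est_V0} and \eqref{eq_app_est_R} follow by viewing $\tau,\xi,h$ as fixed. The adjoint version of the proposition is obtained by the reflection $t\mapsto T-t$, which maps the forward problem to the backward one while preserving the structural form of $L_{c,A,q}$.
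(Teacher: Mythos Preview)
Your argument is correct, but the route you take for the $L^2(0,T;H^1(\Omega))$ bound on $R_\rho$ differs from the paper's. The paper does not use the integration-by-parts trick in the $H^{-1}$ pairing; instead it applies Theorem~\ref{thm_linear_wellposedness}(b) directly to the source $F=-\rho^{-1}e^{i\Phi}L_{c,A,q}V_1\in H^{1,0}(Q)$ to obtain
\[
\|R_\rho\|_{H^{1,2}(Q)}\lesssim \rho^{-1}\|e^{i\Phi}L_{c,A,q}V_1\|_{H^{1,0}(Q)}\lesssim \rho\,\langle\tau,\xi\rangle^{6}h^{-4},
\]
accepting the loss of two powers of $\rho$ from $\partial_t\Phi$, and then interpolates between this $L^2(0,T;H^2)$ bound and the $L^2(Q)$ bound you already derived:
\[
\|R_\rho\|_{L^2(0,T;H^1(\Omega))}\lesssim \|R_\rho\|_{L^2(Q)}^{1/2}\,\|R_\rho\|_{L^2(0,T;H^2(\Omega))}^{1/2},
\]
which immediately gives the $O(1)$ bound in $\rho$ and the stated exponent $h^{-3}$. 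Your duality/IBP argument is a legitimate and perhaps more transparent alternative that avoids needing the full $H^{1,2}$ regularity for the estimate; it yields the same $\rho$-scaling, though the $h$-exponent it produces is governed by $\|L_{c,A,q}V_1\|_{L^2(0,T;H^1(\Omega))}$, which with the bounds \eqref{eq_cor41_v0_v1} comes out as $h^{-4}$ rather than the $h^{-3}$ claimed in \eqref{eq_4_reminder}. Since only \eqref{eq_app_est_R} is used downstream, this discrepancy is harmless for the application, but if you want the sharper $h$-power you should switch to the interpolation argument.
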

\begin{proof}
The estimates \eqref{eq_cor41_v0_v1} follow directly from Proposition \ref{prop:GO_Nonconcentrated}. For the existence of the remainder term $R_\rho$, we apply Theorem \ref{thm_linear_wellposedness} and Remark \ref{remark_L2_est} to deduce that there exists a unique solution of $R_\rho\in H^{1,2}(Q)$ such that 
\begin{align}\label{eq_app_A2_proof_1}
\|R_\rho\|_{L^2(Q)}\lesssim \rho^{-1}\|e^{i\Phi}L_{c,A,q}V_1\|_{L^{2}(Q)}\lesssim \rho^{-1}\langle \tau,\xi\rangle^{5} h^{-2},
\end{align}
and
\[
\|R_\rho\|_{H^{1,2}(Q)}\lesssim \rho^{-1}\|e^{i\Phi}L_{c,A,q}V_1\|_{H^{1,0}(Q)}\lesssim \rho\langle \tau,\xi\rangle^{6}  h^{-4} ,
\]
where we have used \eqref{eq_cor41_v0_v1}.
Using interpolation, we get from the above two inequalities that 
\[
\|R_\rho\|_{L^2(0,T;H^1(\Omega))}\lesssim \|R_\rho\|_{L^2(Q)}^{\frac{1}{2}}\|R_\rho\|_{L^2(0,T;H^2(\Omega))}^{\frac{1}{2}} \lesssim \langle \tau,\xi\rangle^{6} h^{-3},
\]
which proves \eqref{eq_4_reminder}.
\end{proof}

The starting point to prove Theorem~\ref{thm_app_c} is the following integral identity.

\begin{prop}\label{prop_app_integral}
If $\Lambda_{c_1,A_1,q_1} =\Lambda_{c_2,A_2,q_2}$, then
\begin{equation}\label{eq_app_integral}
\begin{aligned}
\LA (c_1-c_2)\nabla v_1,\nabla v_0 \RA_{L^2(Q)} =\, & \LA i(c_1A_1-c_2A_2) \cdot\nabla v_1,v_0 \RA_{L^2(Q)} - \LA i(c_1A_1-c_2A_2) v_1, \nabla v_0 \RA_{L^2(Q)}
\\
&- \LA(c_1|A_1|^2-c_2|A_2|^2- q_1+q_2) v_1,v_0 \RA_{L^2(Q)},
\end{aligned}
\end{equation}
where $v_j\in H^{1,2}(Q)$ solves \eqref{eq_app_IBVP} with $L_{c,A,q} = L_{c_j,A_j,q_j}$, $j = 1,2$, and $v_0\in H^{1,2}(Q)$ solves 
\begin{align}\label{eq_app_ajoint}
\left\{\begin{array}{rrll}
L_{c_2,A_2,q_2} v_0  &=& 0  &\hbox{ in } Q,\\
v_0  &=& g  &\hbox{ on } \Sigma,\\
v_0 &=& 0  &\hbox{ on } \{t =T\}\times\Omega,
\end{array}  \right.
\end{align}
for some $g\in \mathcal{H}^\frac{5}{2}_0(\Sigma)$.
\end{prop}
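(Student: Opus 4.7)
The plan is to derive the identity by the classical duality argument: pair the two equations across the bilinear form, transfer derivatives via Green's identity, and use the DN-map equality on the boundary to cancel the surface contributions on $\Sigma$.

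First, I would multiply $L_{c_1,A_1,q_1} v_1 = 0$ by $\overline{v_0}$ and integrate over $Q$. Integration by parts in $t$ transfers the time derivative to $v_0$; the endpoint contributions vanish because $v_1(0,\cdot) = 0$ and $v_0(T,\cdot) = 0$. Using $L_{c_2,A_2,q_2} v_0 = 0$ to substitute $\overline{i\p_t v_0} = -c_2 \overline{\Delta_{A_2} v_0} - q_2 \overline{v_0}$ eliminates the time derivative and produces the mixed-operator identity
\begin{equation*}
0 = \int_Q \bigl(c_1 \Delta_{A_1} v_1 \overline{v_0} - c_2 v_1 \overline{\Delta_{A_2} v_0}\bigr) \rd x \rd t + \int_Q (q_1 - q_2) v_1 \overline{v_0} \rd x \rd t.
\end{equation*}

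Next, I would apply the spatial Green identity for the magnetic Laplacian (self-adjoint in $L^2$ when $A$ is real), $\int_\Omega \Delta_A u\cdot \overline{w}\rd x = -\int_\Omega (\nabla + iA) u\cdot \overline{(\nabla + iA) w}\rd x + \int_{\partial\Omega} \nu\cdot(\nabla+iA) u \cdot \overline{w}\rd S$, to each of the two spatial terms. This converts them into interior quadratic forms $\mp c_j (\nabla+iA_j) v_1 \cdot\overline{(\nabla+iA_j) v_0}$ plus the surface integral $\int_\Sigma [c_1 \nu\cdot(\nabla+iA_1) v_1 \overline{v_0} - c_2 v_1 \overline{\nu\cdot(\nabla+iA_2) v_0}]\rd S \rd t$. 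The DN-map equality applied to the common boundary datum $f := v_1|_\Sigma = v_2|_\Sigma$ replaces $c_1 \nu\cdot(\nabla+iA_1) v_1$ by $c_2 \nu\cdot(\nabla+iA_2) v_2$ on $\Sigma$; combined with $v_1|_\Sigma = v_2|_\Sigma$, this converts the $\Sigma$-integral into the Green expression for the pair $(v_2, v_0)$, both of which solve $L_{c_2, A_2, q_2} w = 0$ with $v_2(0,\cdot) = 0$ and $v_0(T,\cdot) = 0$. A second application of Green's identity to $(v_2, v_0)$ then shows this boundary integral vanishes.

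Finally, I would expand $(\nabla + iA) v_1 \cdot \overline{(\nabla + iA) v_0} = \nabla v_1\cdot\overline{\nabla v_0} - iA\cdot \nabla v_1 \overline{v_0} + iA v_1\cdot \overline{\nabla v_0} + |A|^2 v_1\overline{v_0}$ for $(A, c) = (A_1, c_1)$ and $(A_2, c_2)$ and subtract. Isolating the term $(c_1 - c_2)\nabla v_1 \cdot \overline{\nabla v_0}$ on one side and collecting the remaining magnetic/electric contributions on the other recovers \eqref{eq_app_integral} exactly. The hard part is the boundary cancellation in the middle step; once that is established, the rest is algebraic bookkeeping. The regularity $v_1, v_0 \in H^{1,2}(Q)$ from Proposition~\ref{prop_app_wellpose} guarantees that the traces $v_j|_\Sigma$ and $c_j\nu\cdot(\nabla+iA_j) v_j|_\Sigma$ lie in spaces compatible with the DN map, so the pointwise use of the DN-map identity on $\Sigma$ and all integrations by parts are fully justified.
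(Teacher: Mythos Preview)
Your proof is correct and follows essentially the same duality argument as the paper. The only organizational difference is that the paper introduces $\widetilde v = v_1 - v_2$, writes $L_{c_2,A_2,q_2}\widetilde v = (-L_{c_1,A_1,q_1}+L_{c_2,A_2,q_2})v_1$, and integrates by parts against $\overline{v_0}$; the boundary contribution then becomes $\langle c_2\nu\cdot(\nabla+iA_2)\widetilde v, v_0\rangle_{L^2(\Sigma)}$, which by the DN-map hypothesis equals the corresponding boundary term from the right-hand side directly, so no second pass through Green is needed. Your route---working with $v_1$ and $v_0$ and invoking $v_2$ only to kill the surface term via a second Green identity---is equivalent and equally valid; it just trades a one-line cancellation for a short auxiliary computation.
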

\begin{proof}
Due to the hypothesis $\Lambda_{c_1,A_1,q_1} =\Lambda_{c_2,A_2,q_2}$, one has
\begin{align}\label{eq_app_A2_1}
c_1 \nu\cdot(\nabla+ iA_1)v_1|_{\Sigma} = c_2 \nu\cdot(\nabla+ iA_2)v_2|_{\Sigma}.
\end{align}
Let us denote $\widetilde{v} = v_1-v_2$, which then solves
\begin{align}\label{eq_app_A2_2}
\left\{\begin{array}{rrll}
L_{c_2,A_2,q_2} \widetilde{v}&=&( -L_{c_1,A_1,q_1}+L_{c_2,A_2,q_2}   )v_1 &\hbox{ in } Q,\\
\widetilde{v}  &=& 0  &\hbox{ on } \Sigma,\\
\widetilde{v} &=& 0  &\hbox{ on } \{t =0\}\times\Omega.
\end{array}  \right.
\end{align}
Multiplying \eqref{eq_app_A2_2} by $\overline{v_0}$ and integrating over $Q$, we apply the integration by parts so that the left-hand side becomes 
\begin{equation}\label{eq_app_A2_3}
\begin{aligned}
\LA L_{c_2,A_2,q_2}\widetilde{v}, v_0 \RA_{L^2(Q)} = \LA c_2\nu\cdot(\nabla+{iA_2})\widetilde{v},v_0 \RA_{L^2(\Sigma)}  
\end{aligned}
\end{equation}
since $v_0$ solves \eqref{eq_app_ajoint}, $\widetilde{v}=0$ on $\Sigma$ and $\widetilde{v}(0,\cdot)={v_0(T,\cdot)}=0$ on $\Omega$.
On the other hand, the right-hand side of \eqref{eq_app_A2_2} becomes
\begin{equation}\label{eq_app_A2_4}
\begin{aligned}
&\LA (-L_{c_1,A_1,q_1}+L_{c_2,A_2,q_2})v_1, v_0 \RA_{L^2(Q)}
\\
=&\LA (-c_1\Delta_{A_1}+c_2\Delta_{A_2})v_1,v_0 \RA_{L^2(Q)} + \LA(-q_1+q_2)v_1,v_0 \RA_{L^2(Q)}
\\
=&\LA (c_1-c_2)\nabla v_1, \nabla v_0\RA_{L^2(Q)}
+\LA i(-c_1A_1+c_2A_2)\cdot\nabla v_1, v_0 \RA_{L^2(Q)} + \LA i(c_1A_1-c_2A_2)v_1,\nabla v_0 \RA_{L^2(Q)}
\\
& + \LA (c_1|A_1|^2-c_2|A_2|^2 -q_1+q_2)v_1,v_0 \RA_{L^2(Q)}+ \LA (-c_1\nu\cdot(\nabla+iA_1)+c_2\nu\cdot(\nabla+iA_2))v_1,v_0 \RA_{L^2(\Sigma)}.
\end{aligned}
\end{equation}
Using \eqref{eq_app_A2_1}, the boundary terms on both sides are the same, that is,
that 
\[
\LC c_2\nu\cdot(\nabla+iA_2)\widetilde{v}\RC |_{\Sigma}=\LC (-c_1\nu\cdot(\nabla+iA_1)+c_2\nu\cdot(\nabla+iA_2))v_1\RC|_{\Sigma}. 
\]
Therefore we obtain \eqref{eq_app_integral} from \eqref{eq_app_A2_3} and \eqref{eq_app_A2_4}. 
\end{proof}

\subsection{Proof of Theorem~\ref{thm_app_c}}

We are now ready to prove Theorem~\ref{thm_app_c}.
\begin{proof}
\textbf{Step 1. Recovery of $c$. }
We shall first prove that $c_1(t) = c_2(t)$ for all $t\in (0,T)$ based on singular solutions inspired by \cite{feizmohammadi_inverse_2022}.  

To this end, by applying the contradiction argument, we assume that 
there exists a point $t^*\in (0,T)$ so that $c_1(t^*)> c_2(t^*)$, without loss of generality.
Due to the continuity of $c_1-c_2$, there exist sufficiently small $\varepsilon_0>0$ and $h>0$ such that $[t^*-2h, t^*+2h]\subset (0,T)$
and 
\begin{equation}
\label{eq_app_thm12_contra}  
c_1(t)-c_2(t) > \varepsilon_0 \quad\hbox{ for } t\in [t^*-2h, t^*+2h].
\end{equation}
Let ${\tilde\zeta}(t)\in C^\infty_0(\R)$ be a smooth function supported in $(t^*-2h, t^*+2h)$ such that $0\le \zeta\le 1$ and ${\tilde\zeta}(t) = 1$ on $[t^*-h, t^*+h]$.
Fix $0<r<1$ and $y\in \R^n\setminus\overline{\Omega}$ such that $\dist(y,\Omega) = r$.

We start with the case $n\ge 3$. Consider the function $\Phi_y\in C^\infty(\overline{\Omega})$ given by 
\[
\Phi_y(x) = \frac{1}{n(2-n)d_n}|x-y|^{2-n},
\]
where $d_n$ denotes the volume of the unit ball in $\R^n$. {By Proposition~\ref{prop:estimate Phi} below,} for any $0<\delta<\frac{1}{2}$, the following estimates hold:
\begin{align}\label{eq_app_phi_est}
\|\Phi_y\|_{L^2(\Omega)}\lesssim 
\begin{cases}
r^{2-\frac{n}{2}-\delta}, & n\ge 4\\
O(1), &n=3
\end{cases}
\quad\hbox{and}\quad r^{1-\frac{n}{2}}\lesssim \|\nabla\Phi_y\|_{L^2(\Omega)}\lesssim r^{1-\frac{n}{2}-\delta}.
\end{align}
We set 
\[
\Psi_y(t,x) = {\tilde\zeta}(t)(t)\Phi_y(x), \quad (t,x)\in \overline{Q}.
\]
Let $v_1$ solve \eqref{eq_app_IBVP} with $L_{c,A,q}= L_{c_1,A_1,q_1}$ and $f = \Psi_y$. To construct $v_1$ of the following form
\[
v_1 = \Psi_y + R_1,
\]
it suffices to show the reminder term $R_1$ satisfies 
\begin{align}
\left\{\begin{array}{rrll}\label{eq_app_R2}
L_{c_1,A_1,q_1} R_1  &=& -L_{c_1,A_1,q_1}\Psi_y  &\hbox{ in } Q,\\
R_1  &=& 0  &\hbox{ on } \Sigma,\\
R_1 &=& 0  &\hbox{ on } \{t =0\}\times\Omega.
\end{array}  \right.
\end{align}
Since $\Delta \Phi_y=0$ in $\Omega$ and
\[
L_{c_1,A_1,q_1}\Psi_y = i \tilde\zeta'(t) \Phi_y+2i{c_1} \tilde\zeta (t) A_1\cdot\nabla\Phi_y + (i c_1\nabla\cdot A_1 - c_1|A_1|^2+q_1) \tilde\zeta (t)\Phi_y,
\]
we get that 
\[
\|L_{c_1,A_1,q_1}\Psi_y\|_{L^\infty(0,T;H^{-1}(\Omega))} + \|L_{c_1,A_1,q_1}\Psi_y\|_{H^1(0,T;H^{-1}(\Omega))} \lesssim \|\Phi_y\|_{L^2(\Omega)}.
\]
The estimate above together with Remark \ref{remark_wellposed_c} implies there exists $R_1\in {H^{1,2}(Q)\cap}L^\infty(0,T;H^1(\Omega))$ 
such that $R_1$ solves \eqref{eq_app_R2} and satisfies
\begin{align}\label{eq_app_R1_est}
\|R_1\|_{L^\infty(0,T;H^1(\Omega))}\lesssim \|\Phi_y\|_{L^2(\Omega)}.
\end{align}

Similarly, we can construct $v_0$ solving \eqref{eq_app_ajoint} with $g = \Phi_y$ of the form $v_0= \Psi_y+R_0$, where $R_0$ satisfies 
\begin{align}\label{eq_app_R0_est}
\|R_0\|_{L^\infty(0,T;H^1(\Omega))}\lesssim 
\|\Phi_y\|_{L^2(\Omega)}.
\end{align}
Using \eqref{eq_app_R1_est} and \eqref{eq_app_R0_est}, we have for $j = 0,1$ that
\begin{align}\label{eq_app_vj_est}
\|v_j\|_{L^2(Q)}\lesssim \|\Psi_y\|_{L^2(Q)}+\|R_j\|_{L^2(Q)}\lesssim \|\Phi_y\|_{L^2(\Omega)},
\end{align}
and 
\begin{align}\label{eq_app_d_vj_est}
\|\nabla v_j\|_{L^2(Q)}\lesssim \|\nabla\Psi_y\|_{L^2(Q)}+\|\nabla R_j\|_{L^2(Q)}\lesssim \|\nabla\Phi_y\|_{L^2(\Omega)}+\|\Phi_y\|_{L^2(Q)}\lesssim\|\nabla\Phi_y\|_{L^2(Q)}.
\end{align}

Now we substitute $v_0$ and $v_1$ into the integral identity in Proposition \ref{prop_app_integral} and obtain 
\begin{equation}\label{eq_app_integral_plug}
\begin{aligned}
\LA (c_1-c_2)\nabla \Psi_y,\nabla \Psi_y \RA_{L^2(Q)} = &  \LA i(c_1A_1-c_2A_2) \cdot \nabla v_1,v_0 \RA_{L^2(Q)} - \LA i(c_1A_1-c_2A_2) v_1, \nabla v_0 \RA_{L^2(Q)}
\\
&- \LA (c_1|A_1|^2-c_2|A_2|^2- q_1+q_2) v_1,v_0 \RA_{L^2(Q)}
\\
&-\LA (c_1-c_2)\nabla R_1, \nabla \Psi_y  \RA_{L^2(Q)}  -\LA (c_1-c_2)\nabla \Psi_y, \nabla R_0 \RA_{L^2(Q)}
\\
&-\LA (c_1-c_2)\nabla R_1, \nabla R_0  \RA_{L^2(Q)}.
\end{aligned}
\end{equation}
Since ${\tilde\zeta}$ is supported in $(t^*-2h, t^*+2h)$ and ${\tilde\zeta}=1$ in $[t^*-h, t^*+h]$, with \eqref{eq_app_phi_est}, the left-hand side now is bounded from below by
\[
\int_{t^*-h}^{t^*+h}(c_1-c_2)\int_\Omega |\nabla\Psi_y|^2 \rd x\rd t \gtrsim h \inf_{t\in [t^*-h,t^*+h]}\{c_1(t)-c_2(t)\} r^{2-n}.
\]
To control the right-side of \eqref{eq_app_integral_plug}, we utilize \eqref{eq_app_phi_est}, \eqref{eq_app_R1_est} 
-\eqref{eq_app_d_vj_est} so that it is bounded from above by 
\begin{multline*}
\|\nabla v_1\|_{L^2(Q)} \|v_0\|_{L^2(Q)} + \| v_1\|_{L^2(Q)}\|v_0\|_{H^1(Q)}
+(\|\nabla R_1\|_{L^2(Q)} + \|\nabla R_0\|_{L^2(Q)})\|\nabla\Psi_y\|_{L^2(Q)}
\\
+\|\nabla R_1\|_{L^2(Q)} \|\nabla R_0\|_{L^2(Q)}
\lesssim
\|\Phi_y\|_{L^2(Q)}\|\nabla\Phi_y\|_{L^2(Q)}
\lesssim
\begin{cases}
r^{3-n-2\delta}, &n\ge 4,\\
r^{1-\frac{n}{2}-\delta}, &n=3.
\end{cases}
\end{multline*}
As a result, we derive that 
\[
0<h\inf_{t\in [t^*-h,t^*+h]}\{c_1(t)-c_2(t)\} \lesssim
\begin{cases}
r^{1-2\delta}, & n\ge 4,\\
r^{\frac{1}{2}-\delta}, &n=3,
\end{cases}
\]
which leads to $\inf_{t\in [t^*-h,t^*+h]}\{c_1(t)-c_2(t)\}=0$
by sending $r\to 0$ and noting $h>0$, contradicting to \eqref{eq_app_thm12_contra}. 
Hence, when $n=3$, we have $c_1 = c_2$ for all $t\in(0,T)$.

For the case $n=2$, we denote $x=(x_1, x_2)$, and $\p_2:=\p_{x_2}$. 
Now we take 
$$
\Phi_y(x) =  \frac{1}{2\pi}\ln |x-y|
$$ 
and still let $\Psi_y(t,x)= {\tilde\zeta}(t) \Phi_y(x)$. We consider the solution is of the form 
\[
v_1 = \p_2\Psi_y + R_1.
\]
Following a similar argument as the case $n\geq 3$ and applying estimates in Proposition~\ref{prop:estimate Phi}, we have 
\begin{align*}
0< 2h\inf_{t\in [t^*-h,t^*+h]}\{c_1(t)-c_2(t)\} r^{-2}
&\leq \int_{t^*-h}^{t^*+h}(c_1-c_2)\int_\Omega |\nabla {\p_2} \Psi_y|^2 \rd x\rd t  \\
&\leq \|{\p_2} \Phi_y\|_{L^2(Q)}\|\nabla {\p_2} \Phi_y\|_{L^2(Q)}\leq r^{{-1-2\delta}},\quad 0<\delta<{1\over 2}.
\end{align*}
Multiplying both sides by $r^2$ and letting $r\rightarrow 0$ also lead to a contradiction. This then implies $c_1=c_2$ in the two-dimensional case. 

From now on, we denote
\[
c\equiv c(t) := c_1(t) = c_2(t).
\]

\textbf{Step 2. Recovery of $A$ and $q$.} Now the integral identity \eqref{eq_app_integral} reduces to 
\begin{align*}
0= \, & \LA ic( A_1-A_2) \cdot\nabla v_1,v_0 \RA_{L^2(Q)} - \LA ic( A_1- A_2) v_1, \nabla v_0 \RA_{L^2(Q)}
\\
&- \LA ( c  |A_1|^2- c|A_2|^2 - q_1+q_2) v_1,v_0 \RA_{L^2(Q)}.
\end{align*}
From the hypothesis $\nabla\cdot A_1 =\nabla \cdot A_2$ in $Q$ and $A_1=A_2$ on $\Sigma$, integrating by parts yields that
\begin{equation}\label{eq_app_integral_A}
\begin{aligned}
0=2i \LA c(A_1-A_2)\cdot \nabla v_1,v_0 \RA_{L^2(Q)} 
- \LA (c|A_1|^2-c|A_2|^2- q_1+q_2) v_1,v_0 \RA_{L^2(Q)}.
\end{aligned}   
\end{equation}
Applying Proposition \ref{prop_app_go}, we take $v_1$ and $v_0$ to be the GO solutions in {$H^{1,2}(Q)$} below: 
\begin{align*}
v_1 =  e^{i\Phi(t,x)}\LC V_0^{(1)} +  \rho^{-1} V_{1}^{(1)}\RC+ R_{\rho}^{(1)},\quad 
v_0 = e^{i\Phi(t,x)}\LC V_0^{(0)} +  \rho^{-1} V_{1}^{(0)}\RC+ R_{\rho}^{(0)},
\end{align*}
with $v_1(0,\cdot)= v_0(T,\cdot) = 0$ in $\Omega$.
Their amplitudes $V_j^{(1)}$ and $V_j^{(0)}$, $j= 0,1$, are as in \eqref{eq_amplitude_v_0_non}, \eqref{eq_amplitude_v_l_non} corresponding to $A_1$ and $A_2$, respectively. In particular, the leading terms {in $C^5(\overline{Q})$} are
\begin{align*}
V_0^{(1)} (t,x) :={e^{i {c'(t)(x\cdot \omega)^2 \over 8c(t)^2}}} \zeta(t) \Theta (t,x) e^{i\int_0^\infty A_1(t,x+s\omega)\cdot\omega\rd s} 
\end{align*}
and
\begin{align*}
V_0^{(0)} (t,x) :={e^{i {c'(t)(x\cdot \omega)^2 \over 8c(t)^2}}}  \zeta(t) e^{i\int_0^\infty A_2(t,x+s\omega)\cdot\omega\rd s},
\end{align*}
where $\xi\in \omega^\perp$ and we choose $\theta=\widetilde{A}$ here so that
$$
\Theta(t,x) = \eta\cdot \nabla(e^{-i(t\tau+x\cdot\xi)}e^{-i\int_\R\omega\cdot \widetilde{A}(t,x+s\omega)\rd s}).
$$

We know that $\|V_j^{(k)}\|_{H^3(Q)}=O(1)$, {$\|R_\rho^{(k)}\|_{L^2(Q)} = O(\rho^{-1})$,}
$\|R_\rho^{(k)}\|_{L^2(0,T;H^1(\Omega))} = O(1)$, for
$j = 0,1$ and $k = 0,1$. By a similar argument as in the proof of Theorem \ref{thm:recover A}, we can deduce from \eqref{eq_app_integral_A} that
\[
\LA \omega\cdot 
 \sqrt{c} (A_1-A_2) V_0^{(1)}, V_0^{(0)}\RA_{L^2(Q)} = 0,
\]
which then yields $\sqrt{c} \zeta^2\widetilde A= 0$ due to the injectivity of the Fourier transform.
By taking $h$ in $\zeta(t)$ small enough and using the fact that $c >c_0>0$, it follows that $\widetilde{A}=0$. 

Finally, a similar argument also leads to the unique determination of $q$, which completes the proof of the theorem. 
\end{proof}

The estimates for the function $\Phi_y(x)$ are provided below.
\begin{prop}\label{prop:estimate Phi}
Let $0<\delta<{1\over 2}$.
\begin{itemize}
\item[(a)] For $n\geq 3$, 
$ 
\Phi_y(x) = \frac{1}{n(2-n)d_n}|x-y|^{2-n} 
$ 
satisfies 
\begin{align}\label{eq_app_phi_est_3}
\|\Phi_y\|_{L^2(\Omega)}\lesssim 
\begin{cases}
r^{2-\frac{n}{2}-\delta}, & n\ge 4\\
O(1), &n=3
\end{cases}
\quad \hbox{ and } \quad r^{1-\frac{n}{2}}\lesssim \|\nabla\Phi_y\|_{L^2(\Omega)}\lesssim r^{1-\frac{n}{2}-\delta}.
\end{align}
\item[(b)] For $n=2$, 
$ 
\Phi_y(x) = \frac{1}{2\pi}\ln |x-y| 
$  
satisfies 
\begin{align}\label{eq_app_phi_est dimension two}
\|\p_2\Phi_y\|_{L^2(\Omega)}\lesssim  r^{-{\delta}}
\quad \hbox{ and } \quad r^{-1}\lesssim \|\nabla \p_2\Phi_y\|_{L^2(\Omega)}\lesssim r^{{-1-\delta}}.
\end{align}
\end{itemize}
\end{prop}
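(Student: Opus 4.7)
The strategy is a direct computation using polar coordinates centered at $y$, combined with a boundary-regularity argument to produce the lower bounds. Throughout, let $R:=r+\mathrm{diam}(\Omega)$, so that $\Omega\subset B(y,R)\setminus B(y,r)$.

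\textbf{Step 1: Upper bounds for case (a), $n\geq 3$.}
Since $|\Phi_y(x)|\lesssim |x-y|^{2-n}$, I would switch to polar coordinates about $y$ and compute
\[
\|\Phi_y\|_{L^2(\Omega)}^2 \lesssim \int_r^R \rho^{2(2-n)}\rho^{n-1}\rd\rho=\int_r^R \rho^{3-n}\rd\rho,
\]
which is bounded by a constant for $n=3$, by $\ln(R/r)$ for $n=4$ (absorbed into $r^{-2\delta}$), and by $r^{4-n}$ for $n\geq 5$. This gives the stated bound $r^{2-n/2-\delta}$ for $n\geq 4$ and $O(1)$ for $n=3$. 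For the gradient, $|\nabla\Phi_y(x)|\lesssim |x-y|^{1-n}$, and the analogous integral $\int_r^R \rho^{1-n}\rd\rho\lesssim r^{2-n}$ yields $\|\nabla\Phi_y\|_{L^2(\Omega)}\lesssim r^{1-n/2}\leq r^{1-n/2-\delta}$.

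\textbf{Step 2: Lower bound for $\|\nabla\Phi_y\|_{L^2(\Omega)}$ in case (a).}
Since $\dist(y,\Omega)=r$, there exists $x_0\in\p\Omega$ with $|x_0-y|=r$. By smoothness of $\p\Omega$, there is a constant $c>0$, independent of $r$ for $r$ small, such that $|B(x_0,cr)\cap \Omega|\gtrsim r^n$. For $x$ in this set, $|x-y|\leq (1+c)r$, so $|\nabla\Phi_y(x)|^2\gtrsim r^{2(1-n)}$. Integrating gives
\[
\|\nabla\Phi_y\|_{L^2(\Omega)}^2\gtrsim r^{2(1-n)}\cdot r^n=r^{2-n},
\]
which is the claimed lower bound.

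\textbf{Step 3: Case (b), $n=2$.}
A direct computation gives $|\p_2\Phi_y(x)|\lesssim |x-y|^{-1}$ and $|\nabla\p_2\Phi_y(x)|\lesssim |x-y|^{-2}$. Polar coordinates about $y$ then yield
\[
\|\p_2\Phi_y\|_{L^2(\Omega)}^2\lesssim \int_r^R \rho^{-1}\rd\rho\lesssim \ln(R/r)\lesssim r^{-2\delta},\qquad \|\nabla\p_2\Phi_y\|_{L^2(\Omega)}^2\lesssim \int_r^R\rho^{-3}\rd\rho\lesssim r^{-2},
\]
giving the upper bounds. For the lower bound on $\|\nabla\p_2\Phi_y\|_{L^2(\Omega)}$, I repeat the argument of Step 2: near the closest point $x_0\in\p\Omega$ to $y$, the set $B(x_0,cr)\cap\Omega$ has measure $\gtrsim r^2$ on which $|\nabla\p_2\Phi_y(x)|\gtrsim r^{-2}$, producing $\|\nabla\p_2\Phi_y\|_{L^2(\Omega)}^2\gtrsim r^{-4}\cdot r^2=r^{-2}$.

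The main technical point is Step 2 (and its analogue in Step 3): one needs to know that a ball of radius $cr$ centered at the closest boundary point $x_0$ intersects $\Omega$ in a set of measure comparable to $r^n$, uniformly in $r$. This follows from the interior cone (or uniform interior ball) condition, which is guaranteed by the assumption that $\p\Omega$ is smooth. All other steps are routine polar-coordinate computations with logarithmic factors in the borderline dimensions absorbed by the arbitrarily small $r^{-\delta}$.
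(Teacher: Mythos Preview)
Your proof is correct and follows essentially the same approach as the paper: direct computation of the $L^2$ integrals using the explicit form of the fundamental solution, with the $\delta$-loss absorbing logarithmic factors in the borderline dimensions. The only cosmetic difference is in the lower bound: you use the closest boundary point $x_0\in\partial\Omega$ together with the interior cone condition to get $|B(x_0,cr)\cap\Omega|\gtrsim r^n$, whereas the paper instead picks an \emph{interior} point $x_0\in\Omega$ with $|x_0-y|=3r$ and $B(x_0,r)\subset\Omega$ (which exists for small $r$ by the uniform interior ball condition), so that one integrates over a full ball rather than an intersection---but both arguments rely on the same boundary regularity and yield the same bound.
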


\begin{proof}
For a fixed $y\in \R^n\setminus\overline{\Omega}$, we denote $\dist(y,\Omega)=r$, there exists $R>0$ so that the ball $\Omega\subset B(y,R)$ with center at $y$ and radius $R$.

(a)
For $n\ge 3$, $\nabla\Phi_y(x) =
{\frac{1}{nd_n}\frac{x-y}{|x-y|^n}}$.
For $0<\delta<\frac{1}{2}$, direct computations show
\[
\int_\Omega|\Phi_y(x)|^2\rd x\lesssim
\begin{cases}
r^{4-2\delta-n} \int_{B(y,R)}|x-y|^{2\delta-n}\rd x \lesssim r^{4-2\delta-n}, &n\ge 4,\\
\int_{B(y,R)}|x-y|^{-2}dx = O(1), &n=3,
\end{cases}
\]
and 
\[
\int_\Omega |\nabla\Phi_y(x)|^2\rd x \lesssim r^{2-2\delta-n}\int_{B(y,R)}|x-y|^{2\delta-n}\lesssim r^{2-2\delta-n}.
\]
To estimate $\|\nabla\Phi_y\|_{L^2(\Omega)}$ from below, we note that for sufficiently small $r$, there exists $x_0\in\Omega$ such that $|x_0-y|=3r$ and $B(x_0,r)\subset\Omega$. This leads to 
\[
\int_\Omega |\nabla\Phi_y(x)|^2\rd x 
\gtrsim 
\int_{B(x_0,r)\cap\Omega} |x-y|^{2-2n}\rd x 
\gtrsim 
\int_{B(x_0,r)}\LC |x-x_0|+|y-x_0| \RC^{2-2n}
\gtrsim r^{2-n}.
\]
(b)
For $n=2$, $\p_2 \Phi_y(x) = {1\over 2\pi} {x_2-y_2\over |x-y|^2}$. 
Direct computations show
\begin{align*}
\int_\Omega |\p_2 \Phi_y(x)|^2 \,dx \lesssim r^{- {2}\delta} \int_{B(y,R)}|x-y|^{-2+ {2}\delta}\,dx \lesssim r^{-{2}\delta},
\end{align*}
and 
\begin{align*}
\int_\Omega |\nabla \p_2 \Phi_y(x)|^2 \,dx \lesssim r^{-2- {2}\delta} \int_{B(y,R)}|x-y|^{-2+{2} \delta}\,dx \lesssim r^{-{2}- {2}\delta}.
\end{align*}
Using a similar argument as (a) to bound $\|\nabla\p_2\Phi_y\|_{L^2(\Omega)}$ from below, we obtain  
\begin{align*}
\int_\Omega |\nabla \p_2 \Phi_y(x)|^2 \,dx 
\gtrsim  \int_{B(x_0,r)}|x-y|^{-4}\,dx
\gtrsim \int_{B(x_0,r)} (|x-x_0|+|y-x_0|)^{-4}\,dx \gtrsim r^{-2}.
\end{align*} 
\end{proof}

\begin{center}
    Acknowledgements
\end{center}
    
Ru-Yu Lai is partially supported by the National Science Foundation (NSF) through grants DMS-2006731 and DMS-2306221. Gunther Uhlmann is partially supported by the NSF, and a Robert R. and Elaine F. Phelps Endowed Professorship at the University of Washington. %and a Si-Yuan Professorship at IAS, HKUST.

\bibliography{NLSref}
\bibliographystyle{abbrv}
\end{document}